\newtheorem{thm}{Theorem}[section]
\newtheorem{lem}[thm]{Lemma}
\newtheorem{prop}[thm]{Proposition}
\theoremstyle{definition}
\newtheorem{defin}[thm]{Definition}
\newtheorem{rem}[thm]{Remark}
\newtheorem{exa}[thm]{Example}
\numberwithin{equation}{section}
\begin{document}


\baselineskip=17pt


\title{Quasi-intermediate value theorem and outflanking arc theorem for plane maps}

\author[J.~Mai]{Jiehua Mai}
\address{School of Mathematics and Quantitative
Economics, Guangxi University of Finance and Economics, Nanning, Guangxi, 530003, P. R. China \&
 Institute of Mathematics, Shantou University, Shantou, Guangdong, 515063, P. R. China}
\email{jiehuamai@163.com; jhmai@stu.edu.cn}

\author[E.~Shi]{Enhui Shi*}
\thanks{*Corresponding author}
\address{School of Mathematics and Sciences, Soochow University, Suzhou, Jiangsu 215006, China}
\email{ehshi@suda.edu.cn}

\author[K.~Yan]{Kesong Yan}
\address{School of Mathematics and Statistics, Hainan Normal
University,  Haikou, Hainan, 571158, P. R. China}
\email{ksyan@mail.ustc.edu.cn}

\author[F.~Zeng]{Fanping Zeng}
\address{School of Mathematics and Quantitative
Economics, Guangxi University of Finance and Economics, Nanning, Guangxi, 530003, P. R. China}
\email{fpzeng@gxu.edu.cn}

\begin{abstract}
For a disk $D$ in the plane $\mathbb R^2$ and a plane map $f$,
we give several conditions on the restriction of $f$ to the boundary $\partial D$
of $D$ which imply the existence of a fixed point of $f$ in some specified domain in $D$. These conditions are similar to those appeared in the
intermediate value theorem for maps on the real line. As an application of the main results, we establish a fixed point theorem
for plane maps having an outflanking arc, which extends the famous theorem due to Brouwer: if $f$ is an orientation-preserving homeomorphism on the plane and has a periodic point, then it has a fixed point.
\end{abstract}

\keywords{Brouwer theory, plane, disc, circle, arc, fixed point, periodic point}
\subjclass[2010]{37E30}

\maketitle

\pagestyle{myheadings} \markboth{J. H. Mai, E. H. Shi, K. S. Yan, F. P. Zeng }{Quasi-intermediate value theorem and outflanking arc theorem}

\section{Introduction}

The following result is due to Brouwer \cite{Bro12b}, which is usually called Brouwer's Lemma.
Some of its simple modern proofs  can be found in \cite{BF93, Bro84, Fa87}.

\begin{thm}[Brouwer's Lemma]\label{Brouwer lemma} Let $f: \mathbb{R}^2
\rightarrow \mathbb{R}^2$ be an orientation preserving
homeomorphism. If $f$ has a periodic point, then $f$ has a fixed
point.
\end{thm}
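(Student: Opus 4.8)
The plan is to argue by contradiction using the classical device of \emph{translation arcs}. Suppose $f$ has a periodic point $p$ but no fixed point; then the least period $n$ of $p$ satisfies $n\ge 2$. Recall that an arc $\alpha$ is a \emph{translation arc} for $f$ if it joins some point $x$ to $f(x)$ and $\alpha\cap f(\alpha)=\{f(x)\}$. I would first record the elementary fact that whenever $f(x)\ne x$ there is a translation arc from $x$ to $f(x)$: since $f$ is a homeomorphism and $f(x)\ne x$, one can choose a closed topological disk $B$ with $x\in\operatorname{int}B$ and $B\cap f(B)=\varnothing$, join $x$ to $f(x)$ by an arc, and then trim and reroute it near its two ends (inside $B$, and inside $f(B)$ after pulling back by $f^{-1}$) so as to delete every crossing with its own $f$-image. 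Applying this with $x=p$ gives a translation arc $\alpha$ containing $p$; since $f^{n}(p)=p$ we have $p\in f^{n}(\alpha)\cap\alpha$, so $f^{n}(\alpha)\cap\alpha\ne\varnothing$ while $n\ge 2$.

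The crux is then \textbf{Brouwer's translation arc lemma}: for a fixed-point-free orientation-preserving homeomorphism $f$ of $\mathbb R^{2}$ and any translation arc $\alpha$, one has $f^{j}(\alpha)\cap\alpha=\varnothing$ for all $j\ge 2$. Granting this, the construction above is an outright contradiction, so $f$ must have a fixed point and the theorem follows. To prove the lemma I would take the least $j\ge 2$ for which $f^{j}(\alpha)\cap\alpha\ne\varnothing$, assume such a $j$ exists, and from suitably chosen sub-arcs of $\alpha,\,f(\alpha),\,\dots,\,f^{j}(\alpha)$ assemble a Jordan curve $\Gamma$ whose enclosed closed disk $\Delta$ is mapped into itself by $f$; the Brouwer fixed point theorem applied to $f|_{\Delta}$ then yields a fixed point, contradicting the hypothesis and proving the lemma.

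I expect the only real difficulty to lie in this last construction, and it is genuinely delicate: one must select the first and last intersection points along the relevant arcs so that $\Gamma$ is actually a simple closed curve (here the Jordan curve theorem and the minimality of $j$ are used), and, crucially, one must verify that $f$ pushes the disk $\Delta$ \emph{inward} rather than outward across its boundary. It is precisely at this point that orientation-preservation is indispensable, and it is precisely the ``on which side of a curve does the image of a sub-arc lie'' bookkeeping that the quasi-intermediate value theorem and the outflanking-arc results of this paper are built to control; in the body of the paper I would therefore expect Brouwer's Lemma to reappear as a short corollary of the outflanking-arc fixed point theorem, the arc through the orbit of $p$ playing the role of the outflanking arc, with the additional benefit that the fixed point is pinned down inside a prescribed subregion of a disk rather than merely shown to exist.
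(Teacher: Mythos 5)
Your proposal goes by way of the classical \emph{translation arc} route (essentially Brouwer's original argument, as modernized by Brown, Fathi, and Barge--Franks), which is genuinely different from what the paper does. The paper builds two new pieces of machinery: a Quasi-Intermediate Value Theorem (Theorem \ref{main}), which locates a fixed point of a continuous plane map $f$ inside a specified bounded complementary domain $W$ of $f\big([x,y]_A\big)\cup[u,v]_A$ from hypotheses on $f|\partial X$ alone, and an Outflanking Arc Theorem (Theorem \ref{main-2}), which packages a local injectivity/orientation condition and an exclusivity condition on an outflanking $n$-steps arc into a fixed point in a prescribed disc. Brouwer's Lemma is then a corollary of Theorem \ref{main3}, which shows that an orientation-preserving homeomorphism with a period-$\ge2$ point automatically admits an outflanking arc satisfying those conditions. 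This route buys more: it applies to non-invertible continuous maps under explicit boundary conditions, and it pins down where the fixed point sits. The translation-arc route is shorter to state but applies only to fixed-point-free orientation-preserving homeomorphisms, and yields no localization.

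That said, your sketch of the key step --- ``assemble a Jordan curve $\Gamma$ whose enclosed closed disk $\Delta$ is mapped into itself by $f$, then apply Brouwer's fixed point theorem'' --- has a real gap. In the known proofs of Brouwer's translation arc lemma one does \emph{not} obtain a disc with $f(\Delta)\subset\Delta$ directly; what one actually gets is weaker, and the fixed point is extracted either by composing with a retraction onto the disc, or by a boundary degree/Lefschetz index computation. (This is precisely what the paper does: in Claim 5 of the proof of Theorem \ref{main} a retraction $\xi:Y\to D_0$ is used so that $\varphi=\xi\circ f|D_0$ maps $D_0$ into itself, and in the hardest case, Claim 9, the argument is a degree $-1$ computation on $\partial B_2$ rather than any disc-into-disc inclusion.) Orientation-preservation is what makes this bookkeeping come out the right way, and controlling ``which side the image lies on'' is exactly the content of the paper's Propositions 2.12--2.14 and of the overall anticlockwise/clockwise formalism. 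So while the high-level plan (construct a translation arc at the periodic point, observe $f^n(\alpha)\cap\alpha\ne\varnothing$ with $n\ge2$, contradict the translation arc lemma) is a valid classical alternative, the crucial lemma is asserted rather than proved, and the specific mechanism you propose for proving it is known to be insufficient without the retraction/degree apparatus that the paper supplies.
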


In fact, the conclusion of the theorem also holds if $f$  is orientation-preserving and has a nonwandering point (see e.g. \cite{Fa87, Ca06}).
This is a cornerstone of Brouwer Theory, which is fundamental in the study of surface homeomorphisms. Franks used Brouwer's Lemma nicely  to generalize Poincar\'e-Birkhoff theorem \cite{Fr88, Fra88}. Some variations of the theorem are key in several proofs of Brouwer's Plane Translation Theorem \cite{Bro12b, CS96,  Fr92, Gu94} and in the study of translation vectors for torus homeomorphisms \cite{Fra89, KK08}.
 An equivariant foliated version of Brouwer's Plane Translation Theorem was established by Le Calvez and  used to give a positive answer to Conley's conjecture in the case of surfaces \cite{Cal06}.
Le Calvez and Tal used Brouwer Theory to study the forcing theory of surface homeomorphisms \cite{CT}.
One may consult \cite{Gu08, Sa01, Sl88} for extensions of Brouwer's Plane Translation Theorem and consult \cite{Ca06} for an interesting
survey on Brouwer Theory and its applications. We should note that if the orientation preserving
condition is removed, then  the conclusion of Theorem \ref{Brouwer lemma} does not hold any more \cite{Bo81}.
\medskip

A natural question is to what extent we can generalize Theorem \ref{Brouwer lemma} to the case of plane maps.
The following example indicates that the answer to this question is subtle.

\begin{exa} Let $n \in \mathbb{N}-\{1\}$,
and let $g: \mathbb{R} \rightarrow \mathbb{R}$ be a continuous map
defined by the following two conditions:

\medskip (1) $g(r)=r+1$ for any $r \in (-\infty, n-1]$;

\medskip (2) $g(n)=1$, and $g|_{[n-1, \infty]}$ is linear.


\medskip
\noindent Define $F: \mathbb{R}^2 \rightarrow \mathbb{R}^2$ by
$$F(r,s)=(g(r), s+d(r,\mathbb{Z}))$$
for any $(r,s) \in \mathbb{R}^2$.  Then $(1, 0)$ is an $n$-periodic point of $F$.
It is easy to see that $F$ has no $m$-periodic point for any $m
\in \mathbb{N}-\{n\}$. Specially, $F$ has no fixed point.

\end{exa}

The aim of the paper is to find some weaker conditions than that in Brouwer's Lemma, which ensure
the existence of fixed points for continuous maps on the plane. In Section 3,
 we establish a quasi-intermediate value theorem for plane maps (Theorem \ref{main}). Roughly speaking, for a disk $D$ in the plane $\mathbb R^2$ and a plane map $f$,
we give several conditions on the restriction of $f$ to the boundary $\partial D$
of $D$ which imply the existence of a fixed point of $f$ in some specified domain in $D$. These conditions are similar to those appeared in the
intermediate value theorem for maps on the real line; this is why we call the main result in this section quasi-intermediate
value theorem. Based on the results in Section 3, we introduce in Section 4 the notion of
outflanking arc (Definition \ref{step arc}) and show that if $f$ is a plane map having an outflanking arc with some technical
conditions (mainly a local injectivity and orientation preserving condition, and an exclusivity condition; see Theorem \ref{main-2} for details), then $f$ has a fixed point.
Then we show that an orientation preserving plane homeomorphism $f$ with a periodic point of period $\geq 2$
satisfies the conditions just mentioned (Theorem \ref{main3}). Thus Brouwer's Lemma is a direct corollary of these results.
To illustrate that our conditions are strictly weaker than that in Brouwer's Lemma, we construct
an orientation preserving homeomorphism of the plane with an outflanking arc but with no periodic points of period $\geq 2$.

\section{Preliminaries}

In this section, we will introduce some notions and notations, and facts mainly focusing on plane topology, which will be used in the
sequel of the paper.

\subsection{General notions and notations}
Throughout the paper, the real line $\mathbb R$ is regarded as a subspace of the plane $\mathbb R^2$,
that is, $\mathbb R=\mathbb R\times \{0\}\subset \mathbb R^2$. Let $0=(0, 0)$ be the origin of
$\mathbb R^2$. Let $d=d_E$ be the Euclidean metric on $\mathbb R^2$. For $x\in\mathbb R$, $X, Y\subset \mathbb R^2$, and
$r\geq 0$, write $B(x, r)=\{y\in\mathbb R^2: d(y, x)\leq r\}$, $B(X, r)=\bigcup\{B(x, r): x\in X\}$, and $d(X, Y)=\inf\{d(x, y): x\in X, y\in Y\}$.
Denote by ${\rm diam}(X)\equiv\sup\{d(y, z):y,\ z\in X\}$ the {\it\textbf{diameter}} of $X$.
A space $Y$ is called a {\it\textbf{disc}} (resp. a {\it \textbf{circle}}, resp. an {\it \textbf{arc}}) if
$Y$ is homeomorphic to the unit disc ${\mathbb D}\equiv B(0, 1)$ (resp. to the unit circle $\mathbb S^1\equiv \partial {\mathbb D}$,
resp. to the unit interval $[0, 1]$). If $J$ is an arc and $h:J\rightarrow [0, 1]$ is a homeomorphism,
then $\partial J\equiv\{h^{-1}(0), h^{-1}(1)\}$ is called the {\it \textbf{endpoint set}} of $J$; we use
$\stackrel{\circ} {J}$ to denote $J-\partial J.$ (Notice that we also use $\stackrel{\circ} {Y}$ to denote the
interior ${\rm Int}\ Y$ of $Y$ for any $Y\subset\mathbb R^2$.)
For any circle $C$ in
$\mathbb{R}^2$, denote by $D(C)$ the disc in $\mathbb{R}^2$ such
that $\partial D(C)=C$. For any open set $U$ in $\mathbb{R}^2$,
write ${\rm Bd}(U)=\partial U=\overline{U}-U$. For any $x, y \in \mathbb{R}^2$ with
$x \neq y$, let $[x,y]$ denote the straight line segment with
endpoints $x$ and $y$, let $(x,y)=[x,y]-\{x,y\}$, and write
$|x-y|=d_E(x,y)$. For any arc $A$ and any $x,y \in A$, let $[x,y]_A$
denote the smallest connected subset of $A$ containing $\{x,y\}$ and
write $(x,y]_A=[y,x)_A=[x,y]_A-\{x\}$ and $(x,y)_A=[x,y]_A-\{x,y\}$.
Note that if $x=y$ then $(x,y]_A=(x,y)_A=\emptyset$.
\medskip

Let $X$ and $Y$ be two subspaces of some metric space, $V \subset E
\subset X$, $W \subset X \cup Y$, and $f: X \rightarrow Y$ be a
continuous map. Let $F|V: X \rightarrow Y$ be the restriction of $f$
to $V$. Define $f||V: V \rightarrow f(V)$ by $(f||V)(x)=f(x)$ for
any $x \in V$, called the {\it \textbf{bi-restriction}} of $f$ to
$V$. By the definition, $f||V$ must be a surjection, and if $f|V$ is
an injection then $f||V$ is a bijection. If $V$ is compact and $f|V$
is an injection then $f||V$ is a homeomorphism. If
$f(E-V) \cap f(V)=\emptyset$, then we say that $f|V$ is {\it
\textbf{exclusive}} in $E$. If $f(V) \cap W=\emptyset$, then we say
that $f|V$ {\it \textbf{dodges}} $W$. Specially, if $V \subset X
\cap Y$ and $f(V) \cap V=\emptyset$ then we say that $f|V$ is {\it
\textbf{moving}}. As examples, we have: (1) If $f|E$ is injective,
then $f|V$ is exclusive in $E$; (2) If $f(v) \neq v$ for some $v \in
V$, then there is a neighborhood $U$ of $v$ in $V$ such that $f|U$
is moving.

\medskip Let $X$ be a  metric space and $f:X\rightarrow X$ be a
continuous map.  Let\;\! $\mathbb N$ \;\!be the set
of all positive integers \,and let\;\! $\mathbb Z_+=\mathbb
N\cup\{0\}$. For any \;\!$n\in\mathbb N$\;\!, \,write \;\!$\mathbb
N_n=\{1,\cdots , n\}$. Let
$f^{\;\!0}$ be the identity map of $X$, and let $f^{\;\!n}=f\circ
f^{\;\!n-1}$ be the composition map of $f$ and $f^{\;\!n-1}$.
For $x\in X$, the set \;\!$O(x,f)\equiv\{f^{\,n}(x): n\in \Bbb Z_+\}$ \,is called the
{\it\textbf{orbit}}\, of \;\!$x$\;\! under $f$. A
point $x\in X$  is called a {\it\textbf{periodic
point}}\, of $f$ if $f^n(x)=x$ \, for some $n\in\mathbb N$\;\!; the minimal such $n$ is called
the {\it\textbf{order}} of $x$. If $x$ is a periodic point of order $1$, that is $f(x)=x$, then
$x$ is called a {\it\textbf{fixed point}} of $f$.  Denote by ${\rm Fix}(f)$\;\! and
${\rm Per}(f)$\;\! the sets of fixed points and periodic points, respectively.

\subsection{Complementary domains of a Peano continuum in the plane}

Recall that a {\it\textbf{continuum}} is a connected compact metric space and a {\it \textbf{Peano continuum}} is a locally connected continuum.
It is well known that if $X$ is a Peano continuum, then it is arcwise connected; that is,  for any $x\not=y\in X$, there exits an arc
$J$ in $X$ with $\partial J=\{x, y\}$ (see \cite[Theorem 8.26]{Na92}).
\medskip

The conclusion (C.1.1) of the following lemma can be seen in \cite[Chapter 10-Theorem 11]{Ku68} and (C.1.2) is
clearly implied by (C.1.1).

\begin{lem}\label{arc}
Let $P$ be a Peano continuum in $\mathbb R^2$ and $V$ be a connected
component of $\mathbb{R}^2-P$. Suppose $w \in V$ and  $z, z' \in
Bd(V)$ with $z \neq z'$. Then

\medskip {\rm (C.1.1)}\ There is an arc $L$ such that $\partial
L=\{w,z\}$ and $L-\{z\} \subset V$.

\medskip {\rm (C.1.2)}\ There is an arc $\Gamma$ such that
$\partial \Gamma=\{z, z'\}$ and $\stackrel{\circ}\Gamma \subset V$.
\end{lem}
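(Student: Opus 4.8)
The plan is to take (C.1.1) as given, since it is precisely \cite[Chapter 10-Theorem 11]{Ku68}: $V$ is open and connected in $\mathbb R^2$, hence locally arcwise connected and arcwise connected, and every point of $\mathrm{Bd}(V)$ is accessible from $V$ by an arc --- a classical consequence of the local connectedness of the Peano continuum $P$ (which underlies, e.g., the Torhorst theorem that $\mathrm{Bd}(V)$ is again a Peano continuum). The real work lies in deducing (C.1.2) from (C.1.1).

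For (C.1.2), I would first fix any point $w \in V$ (for instance the one supplied by the hypothesis) and apply (C.1.1) twice: to the pair $(w,z)$ to get an arc $L$ with $\partial L=\{w,z\}$ and $L-\{z\}\subset V$, and to the pair $(w,z')$ to get an arc $L'$ with $\partial L'=\{w,z'\}$ and $L'-\{z'\}\subset V$. The idea is then to splice $L$ and $L'$ into a single arc from $z$ to $z'$ whose interior stays in $V$. Since $w\in L\cap L'$, the set $L\cap L'$ is nonempty and closed in $L$; traversing $L$ from $z$, I would let $c$ be the \emph{first} point of $L$ lying on $L'$, that is, $c\in L\cap L'$ with $(z,c)_L\cap L'=\emptyset$ (existence follows from compactness, by parametrizing $L$ from $z$ and taking the least parameter whose image lies in $L'$), and set $\Gamma=[z,c]_L\cup[c,z']_{L'}$.

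The remaining steps are verifications. Since $V$ is open, $z,z'\notin V$, whereas $L-\{z\}\subset V$ and $L'-\{z'\}\subset V$; together with $z\neq z'$ this forces $z\notin L'$ and $z'\notin L$. Hence $c\neq z$ and $c\neq z'$, so $[z,c]_L$ and $[c,z']_{L'}$ are nondegenerate subarcs, and $c\in (L-\{z\})\cap(L'-\{z'\})\subset V$. By the minimality of $c$ we get $[z,c]_L\cap[c,z']_{L'}=\{c\}$: any further common point would lie in $(z,c)_L$ (it could not be $z$, as $z\notin L'$), contradicting $(z,c)_L\cap L'=\emptyset$. Thus $[z,c]_L$ and $[c,z']_{L'}$ are arcs meeting only in the shared endpoint $c$, so $\Gamma$ is an arc with $\partial\Gamma=\{z,z'\}$; and, using $z'\notin L$ and $z\notin L'$, $\stackrel{\circ}{\Gamma}=\Gamma-\{z,z'\}=\bigl([z,c]_L-\{z\}\bigr)\cup\bigl([c,z']_{L'}-\{z'\}\bigr)\subset(L-\{z\})\cup(L'-\{z'\})\subset V$, as required.

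The one genuinely delicate point I anticipate is that the naive union $L\cup L'$ need not be an arc, since $L$ and $L'$ may overlap in a complicated set near their common endpoint $w$; passing to the first crossing point $c$ is designed exactly to remove this redundancy, and the disjointness $\mathrm{Bd}(V)\cap V=\emptyset$ is what makes $c$ an honest point of $V$ distinct from both $z$ and $z'$. Everything else is routine point-set topology.
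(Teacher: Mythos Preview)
Your proposal is correct and matches the paper's approach exactly: the paper cites \cite[Chapter 10--Theorem 11]{Ku68} for (C.1.1) and simply states that (C.1.2) ``is clearly implied by (C.1.1)'' without further detail. Your first-crossing-point argument is precisely the standard way to make that implication explicit, so you have supplied the routine verification the paper omits.
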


\subsection{ Extensibility of homeomorphisms between $\sigma$-graphs}

In the study of arcs and circles in the plane, the following two
well-known theorems are very useful (see \cite[pp. 20-32]{Bing83} or
\cite[pp. 65-70]{Moise77}).

\begin{thm}\label{arc nonsep}  No arc in the plane $\mathbb{R}^2$
separates $\mathbb{R}^2$.
\end{thm}

\begin{thm} [The Schoenfiles Theorem] \label{circle extension}Any
homeomorphism between two circles in the plane $\mathbb{R}^2$ can be
extended to a homeomorphism of $\mathbb{R}^2$.
\end{thm}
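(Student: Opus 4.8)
The plan is to reduce the theorem to a single classical fact --- the \emph{tameness} of circles in the plane --- and then to build the required extension by combining a global ``straightening'' homeomorphism with an Alexander-type coning. Concretely, I would first establish that for every circle $C\subset\mathbb R^2$ there is a homeomorphism $\phi_C:\mathbb R^2\to\mathbb R^2$ with $\phi_C(C)=\mathbb S^1$. Granting this, let $h:C_1\to C_2$ be a homeomorphism between two circles and choose $\phi_1,\phi_2$ accordingly; then $\psi:=\phi_2\circ h\circ\phi_1^{-1}$ restricts to a homeomorphism of $\mathbb S^1$ onto itself. Define $\Psi:\mathbb R^2\to\mathbb R^2$ by $\Psi(0)=0$ and $\Psi(t\omega)=t\,\psi(\omega)$ for $t>0$ and $\omega\in\mathbb S^1$. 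Since $|\Psi(x)|=|x|$ for all $x$, the map $\Psi$ is continuous at the origin, and it is a bijection whose inverse is built the same way from $\psi^{-1}$; hence $\Psi$ is a homeomorphism of $\mathbb R^2$ that agrees with $\psi$ on $\mathbb S^1$. Then $H:=\phi_2^{-1}\circ\Psi\circ\phi_1$ is a homeomorphism of $\mathbb R^2$ with $H|_{C_1}=h$. Both of these steps are routine; all the difficulty is concentrated in the tameness statement.

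To prove that every circle $C$ is tame I would follow the classical argument of the cited references \cite{Bing83, Moise77}. By the Jordan curve theorem, $\mathbb R^2\setminus C$ has exactly two components, a bounded one $U$ and an unbounded one $W$, each having $C$ as its full boundary and each uniformly locally connected. The first ingredient is the purely polygonal case: any homeomorphism between two polygonal circles extends to a homeomorphism of $\mathbb R^2$. This is proved by induction on the number of vertices, using the fact that every polygon has an ``ear'' --- a triangle cut off by a diagonal and contained in the bounded complementary region --- so that one reduces the vertex count by one and assembles the extension from explicit affine homeomorphisms of triangles (and of their complements in a large disc). The second ingredient is approximation: using a fine grid, or a nested sequence of cross-cut ``chains'' subordinate to the uniform local connectivity of $U$ and $W$, one constructs polygonal circles $C_n$ converging in the Hausdorff metric to $C$ together with homeomorphisms $g_n:\mathbb R^2\to\mathbb R^2$ straightening $C_n$ onto $\mathbb S^1$, and one arranges these choices so that $(g_n)$ and $(g_n^{-1})$ both converge uniformly. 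The uniform limit $g$ is then a homeomorphism of $\mathbb R^2$ with $g(C)=\mathbb S^1$, which serves as $\phi_C$.

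The hard part is precisely this last step: ensuring that the straightening homeomorphisms can be chosen so that they converge to a \emph{homeomorphism} --- equivalently, that in the limit the curve $C$ is not collapsed and $g$ stays injective. This is the genuine core of the Jordan--Schoenflies theorem, and controlling it requires the cross-cut (chain) estimates, or an equivalent shrinking criterion, which form the technical heart of \cite{Bing83, Moise77}. An alternative treatment of the bounded side would invoke the Riemann mapping theorem together with Carath\'eodory's theorem on the boundary extension of conformal maps of Jordan domains; but since the present paper stays in the topological setting, the polygonal-approximation route is the natural one to take here.
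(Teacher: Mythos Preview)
The paper does not actually prove this statement: it is listed among the ``well-known theorems'' in Section~2.3 and simply cited with page references to \cite{Bing83} and \cite{Moise77}. So there is no proof in the paper to compare against; your proposal is an outline of exactly the classical argument found in those references (tameness of Jordan curves via polygonal approximation, plus an Alexander-type coning to extend a self-homeomorphism of $\mathbb S^1$), which is precisely what the paper is invoking by citation.

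Your sketch is accurate as a high-level roadmap, and you correctly identify where the real work lies --- the convergence and injectivity control in the limit of the straightening homeomorphisms. One minor remark: you do not need the full Alexander coning over all of $\mathbb R^2$; once both circles are straightened to $\mathbb S^1$, it suffices to extend $\psi:\mathbb S^1\to\mathbb S^1$ over the closed disc by coning and over the exterior by the analogous radial formula (or by coning in the one-point compactification), which is what you in effect do. But since the paper treats the theorem as background, a full write-up here would be out of proportion; a one-line reference to \cite{Bing83,Moise77} is all the paper itself offers.
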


 A graph homeomorphic to the subspace $\mathbb S^1 \cup [1, 2]$
of $\mathbb{R}^2$ is called a {\it\textbf{ $\sigma$-graph}}. By Theorems \ref{arc nonsep} and
\ref{circle extension}, it is easy to show the following theorems.

\begin{thm} Any homeomorphism between two arcs in the
plane $\mathbb{R}^2$ can be extended to a homeomorphism of
$\mathbb{R}^2$.
\end{thm}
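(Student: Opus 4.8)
The plan is to complete each of the two arcs to a circle so that the given homeomorphism extends over the pair of circles, and then to apply the Schoenfiles Theorem (Theorem~\ref{circle extension}). Let $A,B$ be arcs in $\mathbb R^2$ and let $\varphi:A\to B$ be a homeomorphism; write $\partial A=\{a_0,a_1\}$ and set $b_i=\varphi(a_i)$, so that $\partial B=\{b_0,b_1\}$.

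First I would produce a circle containing $A$. An arc has empty interior in $\mathbb R^2$, so $V=\mathbb R^2-A$ is a dense open set; by Theorem~\ref{arc nonsep} it is connected, hence it is the unique complementary domain of the Peano continuum $A$, and ${\rm Bd}(V)=\overline V-V=A$. Since $a_0\neq a_1$ both lie in ${\rm Bd}(V)$, Lemma~\ref{arc}(C.1.2), applied with any $w\in V$ and with $z=a_0$, $z'=a_1$, furnishes an arc $\Gamma$ with $\partial\Gamma=\{a_0,a_1\}$ and $\stackrel{\circ}\Gamma\subset V$. Then $\Gamma\cap A=\{a_0,a_1\}=\partial A=\partial\Gamma$, so $C_A=A\cup\Gamma$ is a circle in $\mathbb R^2$. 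Repeating the identical construction for $B$ yields an arc $\Delta$ with $\partial\Delta=\{b_0,b_1\}$, $\Delta\cap B=\{b_0,b_1\}$, and $C_B=B\cup\Delta$ a circle.

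Next I would glue the data together. Fix any homeomorphism $\gamma:\Gamma\to\Delta$ with $\gamma(a_0)=b_0$ and $\gamma(a_1)=b_1$, and define $\psi:C_A\to C_B$ by $\psi|_A=\varphi$ and $\psi|_\Gamma=\gamma$. The two prescriptions agree on $A\cap\Gamma=\{a_0,a_1\}$, so $\psi$ is a well-defined continuous bijection from the compact space $C_A$ onto the Hausdorff space $C_B$, hence a homeomorphism between circles. By the Schoenfiles Theorem, $\psi$ extends to a homeomorphism $\Psi:\mathbb R^2\to\mathbb R^2$, and then $\Psi|_A=\psi|_A=\varphi$, as desired.

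The argument is routine once the pieces are assembled; the only steps that deserve a moment's care are the identification of $\mathbb R^2-A$ as a single complementary domain whose boundary is all of $A$ (this is precisely where Theorem~\ref{arc nonsep} enters, and it is what allows Lemma~\ref{arc}(C.1.2) to be applied to the endpoints $a_0,a_1$), and the elementary fact that an arc together with a second arc meeting it in exactly their common pair of endpoints is a circle.
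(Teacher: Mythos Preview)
Your proof is correct and follows exactly the route the paper indicates: the paper does not spell out a proof but simply says the result follows from Theorem~\ref{arc nonsep} and Theorem~\ref{circle extension}, and your argument is a clean realization of that sketch (completing each arc to a circle via Lemma~\ref{arc}(C.1.2) after invoking non-separation, then applying Schoenflies). The only addition beyond the two cited theorems is your use of Lemma~\ref{arc} to produce the complementary arc, which is a natural and available tool in the paper.
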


\begin{thm}\label{sigma} Any homeomorphism between two
$\sigma$-graphs in the sphere $\mathbb{R}^2 \cup \{\infty\}$ can be
extended to a homeomorphism of $\mathbb{R}^2 \cup \{\infty\}$.
\end{thm}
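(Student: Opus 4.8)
The plan is to reduce Theorem~\ref{sigma} to Theorem~\ref{circle extension} (the Schoenflies theorem) by decomposing a $\sigma$-graph into its unique embedded circle together with the extra arc, and extending the homeomorphism piece by piece. Let $G$ and $G'$ be two $\sigma$-graphs in $\mathbb{R}^2\cup\{\infty\}$ and $\varphi:G\to G'$ a homeomorphism. A $\sigma$-graph has exactly one vertex of degree $4$ (if we collapse the valence-$2$ vertex of $\mathbb S^1\cup[1,2]$ at the point $1$ coming from both the circle arc and the segment — more precisely, $\mathbb S^1\cup[1,2]$ has two points of degree $3$, namely $1$ and $2$, and a free endpoint is absent), so I should first fix the combinatorial description: $\mathbb S^1\cup[1,2]$ consists of a circle $\mathbb S^1$ and an arc $A=[1,2]$ meeting $\mathbb S^1$ in the single point $1$, which is a free endpoint of nothing — rather, $2$ is a free (degree-$1$) endpoint and $1$ is a degree-$3$ point. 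Thus $G$ contains a unique circle $C$ (the cyclic part), a unique degree-$3$ point $p$, and a unique ``tail'' arc $A$ with $\partial A=\{p,q\}$ where $q$ is the unique free endpoint; and $\varphi$ must carry $C,p,A,q$ to the corresponding objects $C',p',A',q'$ of $G'$. The first step is to record these canonical features and observe they are topologically distinguished, so $\varphi(C)=C'$, $\varphi(A)=A'$, $\varphi(p)=p'$, $\varphi(q)=q'$.

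Next I would extend $\varphi|_C:C\to C'$ to a homeomorphism $H_1$ of $\mathbb{R}^2$ (hence of $\mathbb{R}^2\cup\{\infty\}$, fixing $\infty$) by Theorem~\ref{circle extension}. Replacing $\varphi$ by $H_1^{-1}\circ\varphi$, I may assume $\varphi$ is the identity on $C$; now $\varphi(A)=A'$ where $A,A'$ are arcs with one endpoint $p\in C$ and the other endpoint a free point, and $\stackrel{\circ}{A},\stackrel{\circ}{A'}$ lie in one complementary component of $C$. By the Jordan curve theorem $C$ separates the sphere into two discs $D$ and $D^*$ (with $\infty$ in one of them); since $\stackrel{\circ}{A}$ is connected and disjoint from $C$, it lies entirely in one of them, and likewise $\stackrel{\circ}{A'}$. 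If $A$ and $A'$ happen to lie in different complementary discs, I first compose with a homeomorphism of the sphere that fixes $C$ pointwise and swaps the two discs (available since any orientation-reversing homeomorphism of $C$ extends by Schoenflies, or directly: reflect the standard picture), so WLOG $\stackrel{\circ}{A},\stackrel{\circ}{A'}\subset \stackrel{\circ}{D}$ for one closed disc $D$ bounded by $C$. Inside $D$, which is homeomorphic to the standard closed disc, the arcs $A,A'$ are two arcs meeting $\partial D$ only at the common point $p$; an arc in a disc attached at one boundary point is ``unknotted'' — concretely, using a homeomorphism $D\cong\overline{\mathbb D}$ carrying $p$ to $(1,0)$, both images are arcs from the boundary into the interior, and one can push one onto the other. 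The cleanest way to get this push is: extend the pair $(C,A)\to(C,A')$, viewed as $\sigma$-graphs-minus-a-tail-end, by a direct Schoenflies-type argument, or equivalently invoke the theorem for arcs (stated just before Theorem~\ref{sigma}) applied to $A\to A'$ to get $H_2$ of $\mathbb{R}^2$, then correct $H_2$ on $C$.

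So the second main step is a bootstrapping: extend $\varphi|_A:A\to A'$ to a homeomorphism $H_2$ of $\mathbb{R}^2\cup\{\infty\}$ using the arc-extension theorem. Now $H_2^{-1}\circ\varphi$ is a homeomorphism of $G$ onto $G'$ that is the identity on $A$ (in particular on $p$) but only maps $C$ to $C'$; and $C\cup A = G$, $C'\cup A'=G'$ with $C\cap A=\{p\}=C'\cap A'$. Let $\psi=H_2^{-1}\circ\varphi$, which restricts to a homeomorphism $C\to C'$ fixing $p$. The sphere minus the arc $A$ is an open disc $U$ (Theorem~\ref{arc nonsep} plus the Schoenflies theorem, or just: the complement of an arc in the sphere is an open $2$-cell), and inside $U$ the circles $C$ and $C'$ become arcs $\alpha=C\setminus\{p\}$ and $\alpha'=C'\setminus\{p\}$ — properly embedded arcs in the open disc $U$ whose closures add back the single point $p\in\partial(\text{compactification})$... — here I would instead argue directly in the disc $D$ (closed complementary region of $C'$ chosen above) containing the interior of $A'$: the arc $A'$ sits in $D$ from the boundary point $p'$ into the interior, and $\psi$ maps $C$ onto $\partial D=C'$; I want to extend $\psi$ over $D$ so that it sends $A$ to $A'$, which it already does since $\psi|_A=\mathrm{id}$... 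This requires $A\subset D$; if not, re-choose $D$ to be the component containing $\stackrel{\circ}{A}=\stackrel{\circ}{A'}$, which is legitimate because after the previous normalization $A$ and $A'$ lie in the same complementary disc of $C=C'$... but $\psi$ only gives $C\to C'$, not $C=C'$. I will instead finish as follows: extend $\psi|_C:C\to C'$ to $H_3$ of the sphere by Schoenflies; since both $A$ and $A'$ are attached to their circles at the fixed point and lie in corresponding complementary discs, an isotopy of the disc rel boundary moves $H_3(A)$ onto $A'$ (two boundary-attached arcs in a $2$-disc are ambient isotopic rel the attaching point, e.g. by straightening via a homeomorphism of the disc fixing the boundary), giving $H_4$ with $H_4|_C=\psi|_C=H_2^{-1}\varphi|_C$ and $H_4(A)=A'=H_2^{-1}\varphi(A)$; then $H_2\circ H_4$ agrees with $\varphi$ on $C$ and carries $A$ to $A'$, and a final application of the arc-extension theorem on $A$ adjusts it to agree with $\varphi$ on all of $G$.

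The main obstacle is the ``unknotting of a boundary-attached arc in a disc'' — i.e., showing that any arc $A$ in a closed disc $D$ with $A\cap\partial D$ equal to a single endpoint $p$ of $A$ can be carried onto any other such arc $A'$ (with the same $p$) by a homeomorphism of $D$ fixing $\partial D$ pointwise. This is the planar-Schoenflies heart of the argument: it follows by adjoining to $D$ a collar, or by doubling $D$ to a sphere and viewing $A\cup A'$-type configurations as arcs, and invoking Theorem~\ref{arc nonsep} and Theorem~\ref{circle extension}; the bookkeeping of which complementary component contains the tail, and the orientation issue on $C$, are the places where care is needed, but each is handled by at most one extra Schoenflies application. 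Everything else is formal manipulation of the extensions, so I expect the write-up to be short once this disc-unknotting lemma is invoked.
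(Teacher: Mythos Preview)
The paper does not actually prove Theorem~\ref{sigma}; it only asserts (in the line preceding the statement) that it ``is easy to show'' from Theorems~\ref{arc nonsep} and~\ref{circle extension}, and your proposal carries out precisely such a derivation with the same tools. Your strategy---extend $\varphi|_C$ by Schoenflies, normalize so the two tails lie in the same complementary disc of $C$, then push one boundary-attached arc onto the other by a homeomorphism of that disc fixing $\partial D$ pointwise---is correct, and the lemma you isolate as the crux (unknotting a boundary-attached arc in a disc rel boundary) does follow from Theorems~\ref{arc nonsep}--\ref{circle extension}: complete $A$ to a spanning arc of $D$ by adjoining an arc $B$ from the free endpoint $q$ to some $p'\in\partial D\setminus\{p\}$ inside $D\setminus A$ (possible since $D\setminus A$ is connected, by Theorem~\ref{arc nonsep} applied in the sphere), then apply Schoenflies to the Jordan curve $A\cup B\cup(\text{arc of }\partial D)$ to straighten $A$ to a radius.

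Two minor clean-ups: (i) since $G,G'$ are only assumed to lie in $\mathbb{R}^2\cup\{\infty\}$, first move $\infty$ off $G\cup G'$ before invoking the planar Theorems~\ref{circle extension}--2.4; (ii) commit to one reduction order (extend over $C$ first) instead of sketching both, which will remove the mid-proof restarts.
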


\subsection{Topological conjugacy}

Let $X_1, X_2$ and $Y_1, Y_2$ be subsets of $\mathbb{R}^2$. Suppose
that $f: X_1 \rightarrow X_2$ and $g: Y_1 \rightarrow Y_2$ are
continuous maps. If there exists a homeomorphism $h: \mathbb{R}^2
\rightarrow \mathbb{R}^2$ such that $h(X_1)=Y_1$, $h(X_2)=Y_2$ and
$g \circ h|X_1=h \circ f$ (equivalently, $g=h \circ f \circ
h^{-1}|Y_1$, or $f=h^{-1} \circ g \circ h|X_1$), then $f$ and $g$
are said to be {\it \textbf{topologically conjugate}}, and $h$ is
called a {\it \textbf{topological conjugacy}} from $f$ to $g$. A
property of a continuous map $f: X_1 \rightarrow X_2$ is said to be
{\it \textbf{invariant under topological conjugacy}} if each
continuous map $g: Y_1 \rightarrow Y_2$ topologically conjugate to
$f$ also has this property. Clearly, the property that a continuous
map $f: X_1 \rightarrow X_2$ has a fixed point or has an
$n$-periodic point for some $n \in \mathbb{N}$ is invariant under
topological conjugacy.

\begin{rem} \label{top conj} Topological conjugacy can be used in the
proofs of some theorems. For example, suppose that there is a
theorem which mentions that a continuous map $f: X_1 \rightarrow
X_2$ has a property (P). Suppose that the property (P) is invariant
under topological conjugacy, and there is a circle $C$ in $X_1$,
which is only required to be homeomorphic to the unit circle $\mathbb S^1$.
However, in proving this theorem, by means of topological conjugacy,
we can assume that $C$ itself is the unit circle $\mathbb S^1$, or is the
boundary of a square. Similarly, there is a $\sigma$-graph $G$ in
$X_1$, then, by Theorem \ref{sigma}, in proving this theorem we can assume
that the $\sigma$-graph $G$ has a specific shape.
\end{rem}

\subsection{Local injectivity and injectivity}

\begin{lem} \label{injectivity} Let $X, Y$ be two metric spaces, $V$ be a
compact subset of $X$, and $f: X \rightarrow Y$ be a continuous map.
Then the following two properties are equivalent:

\medskip

(a)\, $f|V$ is injective, and for each $x \in V$ there exists a
neighborhood $U_x$ of $x$ in $X$ such that $f|U_x$ is injective;

\medskip

(b)\,  There exists a neighborhood $U$ of $V$ in $X$ such that
$f|U$ is injective.

\end{lem}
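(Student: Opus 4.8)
The plan is to prove the equivalence of (a) and (b) by showing that (a) implies (b) (the reverse implication being immediate, since any neighborhood $U$ as in (b) serves as $U_x$ for every $x\in V$, and $f|V$ is injective as a restriction of the injective $f|U$). The main content is a compactness argument combined with a Lebesgue-number-type estimate to rule out ``distant collisions'' — pairs of points far apart that are identified by $f$ — which are not controlled by the local injectivity hypothesis alone.

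First I would fix, using (a), for each $x\in V$ an open neighborhood $U_x$ of $x$ in $X$ on which $f$ is injective, and then shrink it: choose an open $W_x\ni x$ with $\overline{W_x}\subset U_x$ and $\mathrm{diam}(W_x)$ small; by compactness of $V$ extract a finite subcover $W_{x_1},\dots,W_{x_k}$ of $V$. Let $\delta>0$ be a Lebesgue number for this cover relative to the compact set $V$, so that any subset of $X$ of diameter less than $\delta$ that meets $V$ lies in some $W_{x_i}$. The candidate neighborhood will be $U=B(V,\varepsilon)$ for a suitably small $\varepsilon\le\delta/2$ still to be determined.

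The heart of the argument is the following: I claim there is $\varepsilon_0>0$ such that if $p,q\in B(V,\varepsilon_0)$ and $f(p)=f(q)$ then $d(p,q)<\delta$. Suppose not; then there are sequences $p_n,q_n\in B(V,1/n)$ with $f(p_n)=f(q_n)$ and $d(p_n,q_n)\ge\delta$. Pick $a_n,b_n\in V$ with $d(p_n,a_n),d(q_n,b_n)\to 0$; by compactness of $V$ pass to subsequences with $a_n\to a$, $b_n\to b$ in $V$, hence $p_n\to a$, $q_n\to b$, and $d(a,b)\ge\delta>0$, so $a\ne b$. Continuity of $f$ gives $f(a)=\lim f(p_n)=\lim f(q_n)=f(b)$, contradicting the injectivity of $f|V$ asserted in (a). This is the step I expect to be the main (and really the only) obstacle — everything else is bookkeeping — and it is exactly where the \emph{global} injectivity clause of (a), as opposed to mere local injectivity, gets used.

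Finally I would set $\varepsilon=\min\{\varepsilon_0,\delta/2\}$ and $U=B(V,\varepsilon)$, an open neighborhood of $V$ in $X$, and verify $f|U$ is injective: given $p,q\in U$ with $f(p)=f(q)$, the claim yields $d(p,q)<\delta$, so the set $\{p,q\}$ has diameter less than $\delta$ and meets $V$ (it lies within $\varepsilon\le\delta/2$ of $V$, and one checks the two-point set together with a nearby point of $V$ still has diameter $<\delta$; adjusting the constant $\varepsilon$ downward at the outset makes this clean), hence $\{p,q\}\subset W_{x_i}\subset U_{x_i}$ for some $i$. Since $f|U_{x_i}$ is injective, $p=q$. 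This establishes (b) and completes the proof. (One small care point: to apply the Lebesgue number property one wants a set that genuinely meets $V$; replacing $\{p,q\}$ by $\{p,q,a\}$ for some $a\in V$ with $d(p,a)<\varepsilon$ and choosing $\varepsilon$ so that $2\varepsilon+\delta'<\delta$ for the slightly smaller radius $\delta'$ handles this, and I would phrase the constants accordingly.)
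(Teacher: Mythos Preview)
Your proposal is correct and follows essentially the same approach as the paper: reduce to (a)\,$\Rightarrow$\,(b), extract a finite subcover of $V$ by neighborhoods of local injectivity, and use compactness of $V$ together with the global injectivity of $f|V$ to rule out collisions between points that are far apart, leaving only nearby pairs, which are handled by the local injectivity. The only stylistic difference is that the paper handles the ``distant collision'' step quantitatively (setting $\delta'=\min\{d(f(x),f(y)):x,y\in V,\ d(x,y)\ge\varepsilon\}>0$ and invoking uniform continuity), whereas you do it by a sequential contradiction argument; these are equivalent uses of compactness, and your acknowledged constant adjustment for the Lebesgue-number step is routine.
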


\begin{proof} Clearly, property (b) implies (a). Thus it
suffices to show (a) implies (b). For each $x \in V$, there exists
$\varepsilon(x)>0$ such that $f|B(x, 3 \varepsilon(x))$ is
injective. Since $V$ is compact, there exist $x_1, x_2, \ldots, x_n$
in $V$ such that the family $\{B(x_k, \varepsilon(x_k): k \in
\mathbb{N}_n\}$ is a cover of $V$. Let
$\varepsilon=\min\{\varepsilon(x_k): k \in \mathbb{N}_n\}$. Then for
any $x \in V$, there exists a $k=k(x) \in \mathbb{N}_n$ such that
$$x \in B(x_k, \varepsilon(x_k)), \ \ B(x, \varepsilon) \subset B(x_k, 2 \varepsilon(x_k)),$$
and hence $f|B(x, \varepsilon)$ is injective. Let
$$\delta'=\min\left\{d(f(x), f(y)): x, y \in V \mbox{\ with\ } d(x,y) \geqslant \varepsilon\right\}.$$
Then $\delta'>0$ since $f|V$ is injective. Take $\delta \in (0,
\varepsilon]$ such that
$$f(B(x, \delta)) \subset B\left(f(x), \frac{\delta'}{3}\right) \mbox{\ \ for any\ } x \in V.$$
Then for any $x, y \in V$, if $d(x,y) \geqslant \varepsilon$ then
$$d(f(B(x,\delta)), f(B(y, \delta)))\geq d(f(x), f(y))-\frac{2\delta'}{3}>0,$$
which implies that $f|(B(x,\delta) \cup B(y, \delta))$ is injective.
If $d(x,y)<\varepsilon$, then there exists $k \in \mathbb{N}_n$ such
that $x \in B(x_k, \varepsilon(x_k))$, and hence
$$B(x,\delta) \cup B(y, \delta) \subset B(x_k,
3\varepsilon(x_k)),$$which also implies that $f|(B(x,\delta) \cup
B(y,\delta))$ is injective. Therefore, take
$$U=\bigcup\{B(x, \delta): x \in V\},$$
then $U$ is a neighborhood of $V$ in $X$, and $f|U$ is injective.
Lemma \ref{injectivity} is proved.
\end{proof}

\subsection{Orientations of manifolds}

 Let $M$ be an orientable
closed $n$-manifold with $n\geq 1$. Then the $n$-homology group $H_n(M, \mathbb{Z})$ is
isomorphic to $\mathbb{Z}$. Either generator of $H_n(M, \mathbb{Z})$
is called an {\it \textbf{orientation}} of $M$. Let $h: M
\rightarrow M$ be a homeomorphism. Then $h$ induces an isomorphism
$h_{*}: H_n(M, \mathbb{Z}) \rightarrow H_n(M, \mathbb{Z})$. If
$h_{*}$ is the identity map, then $h$ is said to be {\it
\textbf{orientation preserving}}; if $h_{*}(s)=-s$ for any $s
\in H_n(M, \mathbb{Z})$, then $h$ is said to be {\it
\textbf{orientation reversing}}. Clearly, if
$f, g: M \rightarrow M$ are topologically conjugate homeomorphisms,  then both $f$ and $g$ are
either orientation preserving, or  orientation
reversing. One may consult \cite{We14} for the details about orientations of manifolds.
\medskip

Let $h: \mathbb{R}^n \rightarrow \mathbb{R}^n$ be a homeomorphism.
Then $h$ can be uniquely extended to a homeomorphism $h':
\mathbb{R}^n \cup \{\infty\} \rightarrow \mathbb{R}^n \cup
\{\infty\}$ of the $n$-sphere $\mathbb{R}^n \cup \{\infty\}$ by
putting $h'(\infty)=\infty$. $h$ is said to be {\it
\textbf{orientation preserving}} (resp., {\it \textbf{orientation
reversing}}) if $h'$ is orientation preserving (resp., orientation
reversing).

\begin{defin}\, Let $V$ be a circle or a disc in
$\mathbb{R}^2$, and $\xi: V \rightarrow \mathbb{R}^2$ be a
continuous injection. Then $\xi(V)$ is also a circle or a disc, and
$\xi||V: V \rightarrow \xi(V)$ is a homeomorphism. By Theorem \ref{circle extension},
$\xi$ can be extended to a homeomorphism $\xi': \mathbb{R}^2
\rightarrow \mathbb{R}^2$. If $\xi'$ is orientation preserving
(resp., orientation reversing), then we say that the homeomorphism
$\xi||V: V \rightarrow \xi(V)$ or the continuous injection $\xi: V
\rightarrow \mathbb{R}^2$ is {\it \textbf{orientation preserving}}
(resp., {\it \textbf{orientation reversing}}).
\end{defin}

By Theorem \ref{circle extension} with some techniques we can prove the following
properties:

\medskip

(P.1)\ It is independent of the choice of the extension $\xi':
\mathbb{R}^2 \rightarrow \mathbb{R}^2$ whether $\xi$ is orientation
preserving or not.

\medskip

(P.2)\ If the continuous injection $\xi: V \rightarrow \mathbb{R}^2$
is orientation preserving, then $\xi$ and the identity imbedding
$\mathrm{id}_V: V \rightarrow \mathbb{R}^2$ are isotopic, that is,
there exists a family $\{\xi_t: V \rightarrow \mathbb{R}^2~|~t \in
[0,1]\}$ of continuous injections such that $\xi_0=\mathrm{id}_V$,
$\xi_1=\xi$, and $\xi_t$ is continuously dependent on $t$.

\medskip

(P.3)\ Let $W=\mathbb D$\ (if $V$ is a disc) or $W=\mathbb S^1$ (if $V$ is a
circle), and $\eta: W \rightarrow V$ be a homeomorphism. Define the
reflection $\varphi: W \rightarrow W$ by $\varphi(r,s)=(-r,s)$ for
any $(r,s) \in W$. Then the continuous injection $\xi: V \rightarrow
\mathbb{R}^2$ is orientation preserving (resp., orientation
reversing) if and only if $\xi \eta \varphi \eta^{-1}: V \rightarrow
\mathbb{R}^2$ is orientation reversing (resp., orientation
preserving).

\medskip

In addition, by the above definition and property (P.1) we have

\medskip

(P.4)\ Let $V$ be a disc in $\mathbb{R}^2$, and $C=\partial V$. Then
the continuous injection $\xi: V \rightarrow \mathbb{R}^2$ is
orientation preserving (resp., orientation reversing) if and only if
$\xi|C: C \rightarrow \mathbb{R}^2$ is orientation preserving
(resp., orientation reversing).

\medskip

(P.5)\ Let $Y \subset \mathbb{R}^2$, $U$ be a connected subset of
$Y$ which is open in $\mathbb{R}^2$, and $f: Y \rightarrow
\mathbb{R}^2$ be a continuous map. If $f|U$ is injective, then for
any two discs $D$ and $E$ in $U$, $f|D$ is orientation preserving
(resp., orientation reversing) if and only if $f|E$ is orientation
preserving (resp., orientation reversing).

\medskip

Let $Y, U$ and $f$ be as in (P.5). If $f|U$ is injective, and for a disc $D \subset
U$, $f|D$ is orientation preserving (resp., orientation reversing),
then we say that $f|U$ is {\it \textbf{orientation preserving}}
(resp., {\it \textbf{orientation reversing}}).

\subsection{Directed arcs, directed angles, and directed circles}
An arc $A$ with a bijection $\tau: \{0, 1\} \rightarrow \partial A$
is called a {\it \textbf{directed arc}}, written $(A, \tau)$. The
points $\tau(0)$ and $\tau(1)$ are called the {\it \textbf{starting
point}} and the {\it \textbf{ending point}} of $(A, \tau)$,
respectively.

\medskip

Let $D$ be a disc, and $C=\partial D$. A continuous map $\rho: [0,1]
\rightarrow C$ is called a {\it \textbf{quasi-covering map}} if the
following condition holds:

\medskip

(C.1)\ $\rho|[0,1)$ is injective, and $\rho(0)=\rho(1)$.
\medskip
\\
Note that if $\rho: [0,1] \rightarrow C$ is a quasi-covering map
then the map $\rho_{\mathbb{R}}: \mathbb{R} \rightarrow C$ defined
by $\rho_{\mathbb{R}}(n+r)=\rho(r)$ for any $n \in \mathbb{Z}$ and
any $r \in [0,1)$ is a covering map. The circle $C$ with a
quasi-covering map $\rho: [0,1] \rightarrow C$ is called a {\it
\textbf{directed circle}} and is written $(C, \rho)$. The disc $D$
with the quasi-covering map $\rho: [0,1] \rightarrow C$ is called a
{\it \textbf{directed disc}} and is written $(D, \rho)$. Let $\rho':
[0,1] \rightarrow C$ be also a quasi-covering map. If there exists
an isotopy $\{\rho_t: [0,1] \rightarrow C~|~t \in [0,1]\}$ consists
of quasi-covering maps such that $\rho_0=\rho$ and $\rho_1=\rho'$,
and then we regard directed circles $(C, \rho)$ and $(C, \rho')$ as
the same.

\medskip

Define $\overleftarrow{\rho}: [0,1] \rightarrow C$ by
$\overleftarrow{\rho}(r)=\rho(1-r)$ for any $r \in [0,1]$, called
the {\it \textbf{inverse path}} of $\rho$. It is easy to show that
$\rho$ and $\overleftarrow{\rho}$ are not isotopic (in the family of
all quasi-covering maps from $[0,1]$ to $C$), and if $\rho'$ and
$\rho$ are not isotopic, then $\rho'$ and $\overleftarrow{\rho}$ are
isotopic. Thus there are only two directed circles based on $C$,
which are $(C, \rho)$ and $(C, \overleftarrow{\rho})$.

\medskip

We regard the complex plane $\mathbb{C}$ and the real plane
$\mathbb{R}^2$ as the same. In detail, for any $r>0$ and $\theta \in
\mathbb{R}$, the point $r \mathrm{e}^{\mathrm{i}\theta}=r \cos
\theta + \mathrm{i} \cdot r \sin \theta \in \mathbb{C}$ and the
point $(r \cos \theta, r \sin \theta) \in \mathbb{R}^2$ will be
regarded as the same. For any three different points $v$, $x$, $y$
in $\mathbb{C}$ with $|x-y|<|x-v| + |v-y|$, take real numbers
$\theta_{vx}$ and $\theta_{vy}$ such that

\medskip

(C.2)\ $(x-v)/|x-v|=\mathrm{e}^{\mathrm{i} \theta_{vx}}$,
$(y-v)/|y-v|=\mathrm{e}^{\mathrm{i}\theta_{vy}}$, and
$-\pi<\theta_{vy}-\theta_{vx}<\pi$,
\medskip
\\
and we define
$$\angle xvy=\theta_{vy}-\theta_{vx},$$
called the {\it \textbf{directed angle}} from the ray
$\overrightarrow{vx}$ to the ray $\overrightarrow{vy}$. Note that
$\angle xvy$ is independent of the choice of the pair of real
numbers $\theta_{vx}$ and $\theta_{vy}$ satisfying the condition
(C.2).

\medskip

Let $\rho: [0,1] \rightarrow \mathbb{C}=\mathbb{R}^2$ be a
continuous map, and $v \in \mathbb{R}^2-\rho([0,1])$. Take
$\delta>0$ such that, for any $r, s \in [0,1]$, if $|r-s|<\delta$
then
$$|\rho(r)-\rho(s)|<\frac{d_{E}(v, \rho([0,1]))}{3}.$$
For any given $a, b \in [0,1]$ with $0 \leqslant a<b \leqslant 1$,
take a sequence $r_0, r_1, \ldots, r_n$ such that \medskip

(C.3)\ $r_0=a$, $r_n=b$, and $0<r_k-r_{k-1}<\delta$ for each $k \in
\mathbb{N}_n$,
\medskip
\\
and we define
$$\angle (\rho(a)v \rho(b), \rho)=\sum_{k \in \mathbb{N}_n} \angle \rho(r_{k-1})v \rho(r_k),$$
called the {\it \textbf{rotational angle of ray
$\overrightarrow{v\rho(r)}$ along the path $\rho: [0,1] \rightarrow
\mathbb{C}$ from $\overrightarrow{v\rho(a)}$ to
$\overrightarrow{v\rho(b)}$}}. Note that

\medskip

(i)\ $\angle(\rho(a)v\rho(b), \rho)$ is independent of the choice of
sequence $r_0, r_1, \ldots, r_n$ satisfying the condition (C.3);

\medskip

(ii)\ $(\angle(\rho(a) v \rho(b), \rho)-\angle
\rho(a)v\rho(b))/(2\pi)$ is an integer. Specially, if there is a
straight line $L$ passing $v$ such that $L \cap
\rho([a,b])=\emptyset$, then $\angle(\rho(a)v\rho(b), \rho)=\angle
\rho(a)v\rho(b) \in (-\pi, \pi)$.

\medskip

Let $D$ be a disc in $\mathbb{R}^2$, $C=\partial D$, and let $\rho:
[0,1] \rightarrow C$ be a quasi-covering map. Then for any $v, v' \in
\stackrel{\circ}{D}$ , we have $\angle(\rho(0)v \rho(1), \rho)=
\angle(\rho(0)v'\rho(1), \rho) \in \{2\pi, -2\pi\}$.\ If
$\angle(\rho(0)v \rho(1), \rho)$ $=2\pi$ (resp., $-2\pi$), then the
directed circle $(C, \rho)$ and the directed disc $(D, \rho)$ are
said to be {\it \textbf{overall anticlockwise}} (resp., {\it
\textbf{overall clockwise}}). Note that, even if $\angle (\rho(0)v
\rho(1), \rho)=2\pi$ (resp., $-2\pi$), it is still possible that
there exist $0<a<b<1$ such that $\angle(\rho(a)v\rho(b), \rho)<0$
(resp., $>0$).
By the above properties (P.3) and (P.4) we can prove the following

\medskip

\begin{prop} \label{wind} Let $C$ be a circle in
$\mathbb{R}^2$, $\xi: C \rightarrow \mathbb{R}^2$ be a continuous
injection, and $\rho: [0,1] \rightarrow C$ be a quasi-covering map.
Then  $\xi: C \rightarrow \mathbb{R}^2$ is orientation
preserving {\rm(}resp., orientation reversing{\rm )} if and only if,
for any $v \in \mathrm{Int}\ D(C)$ and any $w \in \mathrm{Int}\
D(\xi(C))$,
$$\angle(\rho(0)v \rho(1), \rho) \cdot \angle(\xi \rho(0) w \xi\rho(1), \xi \rho)=4\pi^2 \mbox{\ (resp., $-4\pi^2$)}.$$
\end{prop}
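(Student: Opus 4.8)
The plan is to reduce the whole statement to the elementary remark that a continuous function on $[0,1]$ taking values in the two--point set $\{2\pi,-2\pi\}$ must be constant. First I would carry out the reductions. By the discussion preceding the statement, $\angle(\rho(0)v\rho(1),\rho)\in\{2\pi,-2\pi\}$ and does not depend on $v\in\mathrm{Int}\ D(C)$; since $\xi$ is injective, $\xi\circ\rho$ is a quasi--covering map of the circle $\xi(C)$, so likewise $\angle(\xi\rho(0)\,w\,\xi\rho(1),\xi\rho)\in\{2\pi,-2\pi\}$ and does not depend on $w\in\mathrm{Int}\ D(\xi(C))$. Hence it is enough to treat one $v$ and one $w$. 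Extend $\xi$ to a homeomorphism $\xi'\colon\mathbb R^2\to\mathbb R^2$ by Theorem \ref{circle extension}; passing to the extension of $\xi'$ to the sphere that fixes $\infty$ shows that $\xi'$ sends the bounded complementary component of $C$ onto the bounded complementary component of $\xi(C)$, so $\xi'(v)\in\mathrm{Int}\ D(\xi(C))$ whenever $v\in\mathrm{Int}\ D(C)$. Thus the Proposition will follow once we prove
$$\angle\big(\xi\rho(0)\,\xi'(v)\,\xi\rho(1),\ \xi\circ\rho\big)=\varepsilon\cdot\angle\big(\rho(0)\,v\,\rho(1),\ \rho\big),$$
with $\varepsilon=1$ when $\xi$ is orientation preserving and $\varepsilon=-1$ when $\xi$ is orientation reversing: squaring this and using $(\pm2\pi)^2=4\pi^2$ yields the stated dichotomy.

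For the orientation--preserving case I would exploit property (P.2). Let $\hat\xi=\xi'|D(C)\colon D(C)\to\mathbb R^2$, a continuous injection with $\hat\xi|C=\xi$; since $\xi$ is orientation preserving, property (P.4) makes $\hat\xi$ orientation preserving, so (P.2) provides an isotopy $\{\hat\xi_t\colon D(C)\to\mathbb R^2\mid t\in[0,1]\}$ of continuous injections with $\hat\xi_0=\mathrm{id}_{D(C)}$ and $\hat\xi_1=\hat\xi$. For each $t$ the set $\hat\xi_t(D(C))$ is a disc whose boundary is the circle $\hat\xi_t(C)$, hence $\hat\xi_t(D(C))=D(\hat\xi_t(C))$ and $\hat\xi_t(v)\in\mathrm{Int}\ D(\hat\xi_t(C))$; moreover $\hat\xi_t(v)\notin\hat\xi_t(\rho([0,1]))$ because $\hat\xi_t$ is injective and $v\notin C$. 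Put
$$\Phi(t)=\angle\big(\hat\xi_t\rho(0)\ \hat\xi_t(v)\ \hat\xi_t\rho(1),\ \hat\xi_t\circ\rho\big).$$
Reading off the definition of the rotational angle, $\Phi$ is continuous (for a loop and off--loop basepoint near a given pair one may use a single subdivision, and each elementary directed angle varies continuously), while the overall clockwise/anticlockwise dichotomy recalled before the statement, applied to the directed disc $(D(\hat\xi_t(C)),\hat\xi_t\circ\rho)$, gives $\Phi(t)\in\{2\pi,-2\pi\}$ for every $t$. Therefore $\Phi$ is constant, and $\Phi(1)=\Phi(0)$ is precisely the required identity with $\varepsilon=1$.

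For the orientation--reversing case I would reduce to the previous one by a reflection. Let $R\colon\mathbb R^2\to\mathbb R^2$ be the reflection $R(a,b)=(-a,b)$; it is orientation reversing, so $\xi'\circ R$ is orientation preserving. Applying the orientation--preserving case to the restriction of $\xi'\circ R$ to the circle $R(C)$, with the quasi--covering map $R\circ\rho$ and the interior point $R(v)$, and simplifying by $R\circ R=\mathrm{id}$, yields
$$\angle\big(\xi\rho(0)\,\xi'(v)\,\xi\rho(1),\ \xi\circ\rho\big)=\angle\big(R\rho(0)\ R(v)\ R\rho(1),\ R\circ\rho\big).$$
A short computation from condition (C.2) shows that $R$ reverses the sign of every directed angle, hence of every rotational angle, so the right--hand side equals $-\angle(\rho(0)\,v\,\rho(1),\rho)$, which is the required identity with $\varepsilon=-1$. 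Combining the two cases with the reductions of the first paragraph finishes the proof. The main obstacle is purely technical: carefully matching the discs and circles $D(C)$, $\hat\xi_t(C)$, $D(\hat\xi_t(C))$ along the isotopy and verifying that $\Phi$ is continuous; once $\Phi$ is known to be a continuous $\{2\pi,-2\pi\}$--valued function the conceptual content is automatic.
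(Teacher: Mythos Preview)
Your argument is correct. The paper does not actually give a proof of this proposition; it only remarks that it ``can be proved'' from properties (P.3) and (P.4), so there is little to compare in detail. Your route is a natural way to fill in that gap: you use (P.4) to pass from the circle injection $\xi$ to the disc injection $\hat\xi=\xi'|D(C)$, then invoke (P.2) to obtain an isotopy from $\mathrm{id}_{D(C)}$ to $\hat\xi$ through continuous injections, along which the total rotational angle $\Phi(t)$ is continuous and $\{2\pi,-2\pi\}$-valued, hence constant. The orientation-reversing case you handle by precomposing with the reflection $R$, which is exactly the content of (P.3). So your proof rests on (P.2), (P.4), and the reflection trick behind (P.3), whereas the paper's hint names only (P.3) and (P.4); the extra ingredient (P.2) is what makes the isotopy/continuity argument go through cleanly, and I see no way to avoid something of that sort. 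The only points that deserve a line of justification in a final write-up are the ones you already flag: that $\hat\xi_t(D(C))=D(\hat\xi_t(C))$ (invariance of domain plus the Jordan--Schoenflies picture) and the continuity of $\Phi$ (a common subdivision works on a neighbourhood of each $t$).
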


\subsection{ Sides  of directed circles
and directed arcs}

\begin{defin}\label{side} (1) Let $D$ be a disc in $\mathbb{R}^2$,
$C=\partial D$, and $\rho: [0,1] \rightarrow C$ be a quasi-covering
map. If the directed circle $(C, \rho)$ is overall anticlockwise
(resp., overall clockwise), then any point in $\stackrel{\circ}D$ is
said to be {\it \textbf{on the left side}} (resp., {\it \textbf{on
the right side}}) of $(C, \rho)$, and any point in $\mathbb{R}^2-D$
is said to be {\it \textbf{on the right side}} (resp., {\it
\textbf{on the left side}}) of $(C, \rho)$.

\medskip

(2) Let $A$ be an arc in $\mathbb{R}^2$, $\tau: \{0, 1\} \rightarrow
\partial A$ be a bijection, $D$ be a disc in $\mathbb{R}^2$, and $C=\partial
D$. Suppose that $\partial A=\{x, y\}$, $\tau(0)=x$, $\tau(1)=y$,
and there exist $u \in [x,y)_A$ and $v \in (u, y]_A$ such that
\begin{equation} \label{eq:2-1}
D \cap A=C \cap A=[u, v]_A.
 \end{equation}
Take a quasi-covering map $\rho: [0,1] \rightarrow C$ such that
$$C \cap A=\rho([r,s]), \rho(r)=u, \rho(s)=v$$
for some $0<r<s<1$. If the directed circle $(C, \rho)$ is overall
anticlockwise (resp., overall clockwise), then we say that the disc
$D$ is {\it \textbf{on the left side}} (resp., {\it \textbf{on the
right side}}) of the directed arc $(A, \tau)$.

\medskip

In addition, for any path connected set $V$ in $\mathbb{R}^2$, if
the following condition holds:

\medskip

\noindent($\#$)\, $V \cap \stackrel{\circ}A \neq \emptyset$, $V-A \neq
\emptyset$, and there is a disc $D$ in $\mathbb{R}^2$ satisfying
\eqref{eq:2-1} such that $V \subset D$,
\medskip
\\
then we also say that $V$ is  {\it \textbf{on the left side}}
(resp., {\it \textbf{on the right side}}) of the directed arc $(A,
\tau)$ if $D$ is on the left side (resp., on the right side) of the
directed arc $(A, \tau)$.

\end{defin}

Note that, in (2) of Definition \ref{side}, only the discs $D$ satisfying
\eqref{eq:2-1} and the path connected sets $V$ satisfying the
condition ($\#$) are considered. For any compact set $W$ in
$\mathbb{R}^2-A$, in (2) of Definition \ref{side}, we have not give a
definition for $W$ to be on the left side or on the right side of
the directed arc $(A, \tau)$.

\medskip

From Proposition \ref{wind} we can obtain the following

\begin{prop} Let $E$ be a disc in
$\mathbb{R}^2$, and $f: E \rightarrow \mathbb{R}^2$ be a continuous
injection.

\medskip

(1)\, {\it Let $C$ be a circle in $\stackrel{\circ}E$, $w \in E-C$,
and $\rho: [0,1] \rightarrow C$ be a quasi-covering map. If $f$ is
orientation preserving, then $w$ is on the left side {\rm (}resp.,
on the right side {\rm )} of the directed circle $(C, \rho)$ if and
only if $f(w)$ is on the left side {\rm (}resp., on the right side
{\rm )} of the directed circle $(f(C), f \rho)$.}

\medskip

{\it If $f$ is orientation reversing, then $w$ is on the left side
{\rm (}resp., on the right side {\rm )} of the directed circle $(C,
\rho)$ if and only if $f(w)$ is on the right side {\rm (}resp., on
the left side {\rm )} of the directed circle $(f(C), f \rho)$.}

\medskip

(2)\, {\it Let $A$ be an arc in $\stackrel{\circ}E$, $D$ a disc in
$E$, and $\tau: \{0,1\} \rightarrow \partial A$ be a bijection. If
$f$ is orientation preserving, then $D$ is on the left side {\rm
(}resp., on the right side {\rm )} of the directed arc $(A, \tau)$
if and only if $f(D)$ is on the left side {\rm (}resp., on the right
side {\rm )} of the directed arc $(f(A), f \tau)$.}

\medskip

{\it If $f$ is orientation reversing, then $D$ is on the left side
{\rm (}resp., on the right side {\rm )} of the directed arc $(A,
\tau)$ if and only if $f(D)$ is on the right side {\rm (}resp., on
the left side {\rm )} of the directed arc $(f(A), f \tau)$.}

\end{prop}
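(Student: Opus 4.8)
The plan is to reduce the statement to Proposition \ref{wind}, which already translates orientation behaviour of a continuous injection on a circle into a product of rotational angles. The two assertions (1) and (2) are really the same assertion applied first to a circle and then to the boundary circle of the disc $D$, so I would organize the argument around part (1) and then obtain part (2) as a corollary via Definition \ref{side}(2).

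For part (1), fix the circle $C\subset\stackrel{\circ}E$, a point $w\in E-C$, and a quasi-covering map $\rho\colon[0,1]\to C$. Since $f$ is a continuous injection on the disc $E$, the restriction $f|C$ is a continuous injection of $C$, and $f(C)$ is a circle. The key point is that $f$ being orientation preserving on $E$ is, by property (P.4), the same as $f|C$ being orientation preserving, and by (P.5) this does not depend on which disc in a connected open injective region we test. Then Proposition \ref{wind}, applied to $\xi=f|C$, tells us that for any $v\in\mathrm{Int}\,D(C)$ and any $v'\in\mathrm{Int}\,D(f(C))$ the product $\angle(\rho(0)v\rho(1),\rho)\cdot\angle(f\rho(0)\,v'\,f\rho(1),f\rho)$ equals $4\pi^2$ when $f$ is orientation preserving and $-4\pi^2$ when $f$ is orientation reversing. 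Now recall from the paragraph preceding Proposition \ref{wind} that $\angle(\rho(0)v\rho(1),\rho)\in\{2\pi,-2\pi\}$ for every interior point $v$, and it is $2\pi$ exactly when $(C,\rho)$ is overall anticlockwise, i.e. exactly when points of $\stackrel{\circ}{D(C)}$ lie on the left side of $(C,\rho)$. I would split into cases according to whether $w\in\stackrel{\circ}{D(C)}$ or $w\in\mathbb R^2-D(C)$; in the first case $f(w)\in\stackrel{\circ}{D(f(C))}$ (injectivity of $f$ on $E$ carries the disc bounded by $C$ onto the disc bounded by $f(C)$, by Theorem \ref{arc nonsep} and Jordan curve considerations), and in the second case $f(w)$ lies outside $D(f(C))$. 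Reading off the sign of $\angle(\rho(0)w\rho(1),\rho)$ versus the sign of $\angle(f\rho(0)\,f(w)\,f\rho(1),f\rho)$ from the $4\pi^2$ (resp. $-4\pi^2$) identity, and translating via Definition \ref{side}(1), gives exactly the left/right correspondence claimed — unchanged when $f$ is orientation preserving, swapped when orientation reversing.

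For part (2), let $A\subset\stackrel{\circ}E$ be an arc, $D\subset E$ a disc, and $\tau\colon\{0,1\}\to\partial A$ a bijection; by Definition \ref{side}(2) the phrase ``$D$ is on the left side of $(A,\tau)$'' presupposes a disc configuration satisfying \eqref{eq:2-1}, and the side is defined through a quasi-covering map $\rho$ of $C=\partial D$ chosen so that $C\cap A=\rho([r,s])$ with $\rho(r),\rho(s)$ the two endpoints of the overlap arc. Since $f$ is a continuous injection on $E$, it maps this configuration to the analogous configuration for $f(A)$, $f(D)$, $f\tau$, and $f\rho$ is again an admissible quasi-covering map of $f(C)=\partial f(D)$; here I would invoke Theorem \ref{arc nonsep} to know $f(A)$ still fails to separate and $f(D)$ is still a disc, and check that \eqref{eq:2-1} is preserved simply because $f$ is injective. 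Then ``$D$ on the left side of $(A,\tau)$'' means $(C,\rho)$ is overall anticlockwise, which by part (1) (applied to an interior point of $D$ as the witness $w$) is equivalent to $(f(C),f\rho)$ being overall anticlockwise when $f$ is orientation preserving, i.e. to $f(D)$ being on the left side of $(f(A),f\tau)$; and it flips when $f$ is orientation reversing.

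The main obstacle I expect is not the orientation bookkeeping, which is mechanical once Proposition \ref{wind} is in hand, but rather the care needed in part (2) to verify that applying $f$ genuinely produces a legitimate instance of the side-configuration of Definition \ref{side}(2): one must confirm that $f(D)\cap f(A)=f(C)\cap f(A)=f([u,v]_A)$ is again a single subarc with the two required endpoint conditions, and that the chosen quasi-covering map transports correctly (the restriction $f|C$ composed with $\rho$ is a quasi-covering map of $f(C)$ because $f|C$ is injective). This is where injectivity of $f$ on the whole disc $E$, rather than merely on a neighborhood of $C$ or $A$, is used. Everything else is a direct quotation of (P.3)–(P.5), Proposition \ref{wind}, and the definitions.
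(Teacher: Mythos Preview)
Your proposal is correct and follows exactly the route the paper indicates: the paper states only that this proposition ``can be obtained from Proposition~\ref{wind}'' without supplying any further argument, and your outline does precisely that, unwinding Definition~\ref{side} and properties (P.1)--(P.5) around Proposition~\ref{wind}. Your treatment is in fact more detailed than what the paper provides; the only minor slip is the phrase about ``reading off the sign of $\angle(\rho(0)w\rho(1),\rho)$'' when $w$ lies outside $D(C)$ (there the winding is $0$, not $\pm 2\pi$), but your case split and the inside/outside correspondence you establish handle this correctly anyway.
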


\section{Quasi-Intermediate Value Theorem}

The following fixed point theorem is well known. Since it is equivalent to the Intermediate Value Theorem,
we also call it  Intermediate Value Theorem here.

\begin{thm}[Intermediate Value Theorem]\label{mean value} Let $I$ be a connected
subset of $\mathbb{R}$, and $f: I \rightarrow \mathbb{R}$ be a
continuous map. If there exist $x,y$ in $I$ such that
\begin{equation} \label{eq:3-1}
\big(f(x)-x\big) \cdot \big(f(y)-y\big)\leq 0,
 \end{equation}
then $f$ has a fixed point in $[x,y]$\;\!.
\end{thm}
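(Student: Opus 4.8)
The plan is to reduce the statement to the classical intermediate value theorem for a single real-valued function by passing to the displacement function $g: I \to \mathbb{R}$ defined by $g(t) = f(t) - t$. Since $f$ is continuous, so is $g$, and a fixed point of $f$ in $[x,y]$ is exactly a zero of $g$ in $[x,y]$. Hypothesis \eqref{eq:3-1} reads $g(x) \cdot g(y) \leq 0$.

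First I would dispose of the degenerate cases. If $g(x) = 0$ or $g(y) = 0$, then $x$ or $y$ is itself a fixed point of $f$ lying in $[x,y]$, and we are done; in particular this covers the case $x = y$, since then $g(x)^2 = g(x)g(y) \le 0$ forces $g(x) = 0$. So we may assume $x \neq y$ and, after interchanging the names of $x$ and $y$ if necessary, that $x < y$ and $g(x), g(y)$ are nonzero with opposite signs.

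Next I would use the connectedness of $I$: a connected subset of $\mathbb{R}$ is an interval, so from $x, y \in I$ and $x < y$ we get $[x,y] \subseteq I$, and hence $g$ is defined and continuous on all of $[x,y]$. Applying the classical intermediate value theorem to $g$ on the compact interval $[x,y]$, the value $0$ lies strictly between $g(x)$ and $g(y)$, so there exists $c \in (x,y) \subseteq [x,y]$ with $g(c) = 0$, i.e. $f(c) = c$. This exhibits the desired fixed point of $f$ in $[x,y]$.

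There is essentially no hard step here: the argument is entirely routine once the displacement function is introduced, and the only points requiring a word of care are the bookkeeping of the degenerate cases (one endpoint already being a fixed point, and $x = y$) and the observation that connectedness of $I$ is precisely what guarantees $[x,y] \subseteq I$ so that the classical intermediate value theorem can be invoked on the whole segment. As the preamble to the statement indicates, this "theorem" is just a repackaging of the intermediate value theorem, and the proof reflects that.
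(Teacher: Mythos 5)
Your proof is correct and is exactly the canonical argument; the paper does not supply a proof at all (it records the statement as a well-known repackaging of the intermediate value theorem), and your reduction via the displacement function $g(t)=f(t)-t$ together with the observation that connectedness of $I$ yields $[x,y]\subseteq I$ is precisely what the authors have in mind.
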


The condition ($\ref{eq:3-1}$) is about the restriction of $f$ to the boundary of $[x, y]$, which  ensures the existence of fixed points of $f$ in $[x, y]$.  The aim of this section is to establish a fixed theorem for plane maps $f$,  in the same spirit as in Theorem \ref{mean value}; that is, for a disc $X$ in the plane, we  will give some conditions on the restriction of $f$ to the boundary $\partial X$ which ensure the existence of
 fixed points of $f$ in some specified domain in $X$.
\medskip

Let $\lambda: A \rightarrow [0,1]$ be a homeomorphism. For any $x, y \in A$, write
$x \prec y$ or $y \succ x$ if $\lambda(x)<\lambda(y)$. This relation
$\prec$ is called the {\it \textbf {order}} on $A$ induced by
$\lambda$. In Theorem \ref{mean value}, if we write $u=f(x)$ and $v=f(y)$, and assume that $x<y$, then ($\ref{eq:3-1}$)
holds  if and only if one of the following conditions holds:

(1) $\{u, v\} \subset [x,y]$;

(2) $x \leq v<y<u$;

(3) $v<x<u \leq y$;

(4) $v<x<y<u$;

(5) $u<x<y<v$.

\noindent These conditions inspire us to give the following fixed point theorem.
\medskip

\begin{thm}[Quasi-Intermediate Value Theorem]\label{main}  Let $X$ be a disc in
$\mathbb{R}^2$, $f:X\rightarrow \mathbb R^2$ be a continuous map,  and $A$ be an arc in $\partial X$.
Let $\lambda: A \rightarrow [0,1]$ be a homeomorphism and $\prec$
be the order on $A$ induced by $\lambda$. Suppose there exist points
$x, y, u, v$ in $A$ such that $x \prec y$, $f(x)=u$, $f(y)=v$,
\begin{equation}\label{eq:3-2}
\big(\lambda(u)-\lambda(x)\big) \cdot \big(\lambda(v)-\lambda(y)\big)<0,
\end{equation}
\begin{equation}\label{eq:3-2-1}
f\big([x,y]_A\big)\subset X,\ \ f\big([x,y]_A\big) \cap (u, v)_A=\emptyset, \ \  f^{-1}f\big((x,y)_A\big)=(x,y)_A,
\end{equation}
 and there exist a bounded connected
component $W$ of $\mathbb{R}^2-f\big([x,y]_A\big)-[u, v]_A$ and a neighborhood $U_q$
of some point $q \in (x,y)_A$ in $X$ such that $f(U_q-A) \subset W$.
Then $f$ has a fixed point in $W$ if one of the following conditions
holds:

\medskip$(1)$\ $\{u,v\} \subset [x,y]_A$;

\medskip$(2)$\ $x \preceq v \prec y \prec u$, and
$f\big([y,u]_A\big) \cap [y,u]_A=\emptyset$;

\medskip$(3)$\ $v \prec x \prec u \preceq y$, and
$f\big([v,x]_A\big) \cap [v,x]_A=\emptyset$;

\medskip$(4)$\ $v \prec x \prec y \prec u$, and
$f\big([v,x]_A\big) \cap [v,x]_A=f\big([y,u]_A\big) \cap
[y,u]_A=\emptyset$;

\medskip$(5)$\ $u \prec x \prec y \prec v$, and $f\big([u,x)_A
\cup (y, v]_A\big) \subset \stackrel{\circ}D_0$ or $f\big([u,x)_A \cup
(y, v]_A\big) \subset \mathbb{R}^2-D_0$,  where
$$
D_0= \cup\{V: V \mbox{is a bounded connected component of}\ \mathbb R^2-f\big([x,y]_A\big)-[u, v]_A\}
\vspace{-2mm}
$$
$$
\hspace{-76mm}\cup f\big([x,y]_A\big)\cup [u, v]_A.
$$
\end{thm}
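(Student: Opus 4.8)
The plan is to argue by a degree / rotation-number computation. A fixed point of $f$ in $W$ is exactly a zero of the displacement map $g:=\mathrm{id}_X-f$ lying in $W$, and I will produce one by exhibiting a disc inside $W$ on whose bounding circle $g$ is nowhere zero and has non-zero winding number about the origin. The rotation-number bookkeeping is precisely what the directed-angle and rotational-angle machinery of Section~2 (in particular Proposition~\ref{wind}) is built for, and Lemma~\ref{arc} will supply the auxiliary arcs. First I would record the geometry: with $P:=f([x,y]_A)\cup[u,v]_A$, the condition $f([x,y]_A)\cap(u,v)_A=\emptyset$ in \eqref{eq:3-2-1} makes $P$ a Peano continuum (the continuous image of an arc glued to a subarc of $A$ along the two points $u=f(x)$ and $v=f(y)$), and $P\subset X$; since $\mathbb{R}^2\setminus X$ is connected, unbounded and disjoint from $P$, every bounded complementary component of $P$---hence $W$---lies in $\stackrel{\circ}{X}$, so $g$ is defined and continuous on $\overline{W}$. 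By Remark~\ref{top conj} and Theorem~\ref{circle extension} I may normalize so that $X$ is a round disc, $A$ a standard subarc of $\partial X$, and $\lambda$ the standard parametrization.

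Next I would reduce cases (2)--(5) to case (1). In each of (2)--(5) the extra hypotheses assert precisely that $f$ has no fixed point on the relevant ``overshoot'' arc(s) $[v,x]_A$ and/or $[y,u]_A$. Adjoining these arcs (with short arcs of $A$) to $P$ using Lemma~\ref{arc}(C.1.2) produces a larger Peano continuum $P'$ and a complementary component $W'\supseteq W$ whose data are those of case (1), with $[x,y]_A$, $u$, $v$ replaced by the enlarged arc and its endpoints; a fixed point found in $W'$ is then pushed off the overshoot arcs by the disjointness hypotheses and so lies in $W$, and the two alternatives in (5) record which side of the relevant directed arc $f$ throws the end-pieces onto. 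Thus it suffices to treat case (1).

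For case (1) I would argue by contradiction: assume $f$ has no fixed point in $W$, so $\widehat g:=g/|g|\colon W\to\mathbb S^1$ is continuous. Let $\Lambda$ be the loop that runs once along $f([x,y]_A)$ from $u$ to $v$ and back along $[u,v]_A$; since $W$ is a bounded complementary component of $\mathrm{image}(\Lambda)$, and since $f([x,y]_A)\cap(u,v)_A=\emptyset$ together with $f^{-1}f((x,y)_A)=(x,y)_A$ excludes spurious self-crossings, $\Lambda$ has winding number $\pm1$ about the points of $W$ (via the Jordan curve theorem when $\Lambda$ is simple, and Proposition~\ref{wind}-type reasoning in general). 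Using Lemma~\ref{arc}(C.1.1) I would join the point $q$ (with $f(U_q-A)\subset W$) to a point of $W$ by an arc and build a Jordan curve $J$ with $\overline{D_J}\subset W$ tracing $\Lambda$. I would then compute the total turning of the vector $z-f(z)$ as $z$ traverses $J$, splitting $J$ into: the stretch near $[u,v]_A\subset\partial X$, where $z-f(z)$ points from the interior point $f(z)\in f([x,y]_A)$ out toward the boundary point $z$ and contributes a controlled amount; the short stretch near $f(q)\in\partial W$, where $f(U_q-A)\subset W$ pins $z-f(z)$ to point into $W$; and the stretch near $f([x,y]_A)$, where the inequality \eqref{eq:3-2}---the one-dimensional intermediate-value content $(\lambda(u)-\lambda(x))(\lambda(v)-\lambda(y))<0$ transported by $\lambda$---forces $z-f(z)$ to reverse sense between the images of $x$ and $y$, with $f([x,y]_A)\cap(u,v)_A=\emptyset$ and $f^{-1}f((x,y)_A)=(x,y)_A$ preventing the intermediate turning from cancelling it. The total turning is a non-zero multiple of $2\pi$; but $J=\partial D_J$ with $D_J\subset W$ a disc on which $\widehat g$ is continuous, so the turning must be $0$---a contradiction. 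Hence $f$ has a fixed point in $W$.

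The hard part will be that last turning computation: showing that the turning contributed by the $f([x,y]_A)$-stretch of $J$ cannot annihilate the $\pm2\pi$ contributed by the $[u,v]_A$- and $U_q$-stretches. This is exactly where the full force of \eqref{eq:3-2-1} is needed---so that $f([x,y]_A)$ meets the return arc $[u,v]_A$ only at $u,v$ and $f$ does not fold $(x,y)_A$ back onto $f((x,y)_A)$---and it is why the five-fold case split is unavoidable, each configuration of $u,v$ relative to $[x,y]_A$ requiring its own check of the orientation reversal. A secondary obstacle is plane-topological: $\overline{W}$ need not be a disc (it may be an annulus), and $f$ may have fixed points on $\partial W$, so the Jordan curve $J$ and the disc $D_J\subset W$ must be constructed by hand from Lemma~\ref{arc} and Theorem~\ref{arc nonsep}, with care that the zero of $g$ produced lies in $W$ and not merely on $\partial W$.
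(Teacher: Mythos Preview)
Your degree/winding-number strategy is a reasonable instinct, but the reduction of cases (2)--(5) to case (1) as you describe it does not work, and this is a genuine gap. In case (2), for instance, you propose to enlarge $P$ by adjoining the overshoot arc $[y,u]_A$ and thereby land in case (1) with an enlarged subarc of $A$. But case (1) requires the \emph{endpoints} of the new arc to map into it: enlarging $[x,y]_A$ to $[x,u]_A$ gives $f(x)=u\in[x,u]_A$, but nothing controls $f(u)$, and in general $f(u)\notin[x,u]_A$. The paper's reduction is different: it precomposes $f$ with a homeomorphism $\psi$ of $X$, supported in a thin rectangle about $[y,u]_A$, that stretches $[x,u]$ onto $[x,y]$, so that $F=f\circ\psi$ satisfies $F(u)=f(y)=v$; the moving hypothesis $f([y,u]_A)\cap[y,u]_A=\emptyset$ is exactly what guarantees $\mathrm{Fix}(F)=\mathrm{Fix}(f)$. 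Case (5) is not reduced to case (1) at all but handled by a separate and elaborate construction passing through $\mathbb R^2\cup\{\infty\}$ and ending in a degree-$(-1)$ obstruction; the two alternatives in its hypothesis really do require different treatments.

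For case (1) itself the paper does not compute a winding number of $\mathrm{id}-f$. Instead it first proves structural facts (Claims 1--4): it builds a disc $D_0$ with $[u,v]_A\subset\partial D_0\subset P$ and $\overline W\subset D_0\subset X$ (already delicate when $u=v$), and shows that $f(X)$ misses both the ``wrong'' bounded complementary components of $P$ and an outer collar of $\partial D_0\setminus[u,v]_A$. This last fact---which is where $f^{-1}f((x,y)_A)=(x,y)_A$ and the neighbourhood $U_q$ are actually used---lets one retract the codomain onto $D_0$ and apply the Brouwer fixed-point theorem to $\varphi=\xi\circ f|_{D_0}:D_0\to D_0$; a direct check then gives $\mathrm{Fix}(\varphi)=\mathrm{Fix}(f)\cap D_0\subset W$. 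Your winding-number route would need the analogue of Claims 3--4 anyway, to rule out backward winding of $z-f(z)$ along the uncontrolled portion of $f([x,y]_A)$; that is precisely the ``hard part'' you flagged, and without it the argument is incomplete. Note also that on the $f([x,y]_A)$-portion of $\partial W$ the displacement involves $f^2$, so the bookkeeping you sketch (splitting $J$ into three stretches with predictable behaviour of $z-f(z)$) does not straightforwardly go through.
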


\begin{proof} Let $K=f\big([x,y]_A\big)$, $K_0=f\big((x,y)_A\big)$, and $P=K\cup (u, v)_A$. Then
(\ref{eq:3-2-1}) can be written as
\begin{equation}\label{eq:3-2-2}
K \cap (u, v)_A=\emptyset, \ \  f^{-1}(K_0)=(x,y)_A,
\end{equation}
and $\mathbb R^2-f\big([x,y]_A\big)-[u, v]_A=\mathbb R^2-P$.
Note that $\{u, v\}\subset K$, $K_0=K-\{u, v\}$, $K_0$ is arcwise connected, and $K$ is a Peano continuum (see \cite[Theorem 8.18]{Na92}).

\medskip {\textbf{Claim 1.}\ {\it $\overline{W}=W \cup K \cup (u,v)_A \subset X$ and there is a
connected open neighborhood $V_0$ of $(u, v)_A$ in $X$ such that  $V_0-(u,v)_A \subset \overline{V_0}-[u,v]_A \subset W$.}

\medskip{\textbf{Proof of Claim 1.}}\
Since $W$ is a bounded connected
component of $\mathbb{R}^2-P$, and since $K \cup A \subset
X$, we have
\begin{equation}\label{eq:3-3} W \subset~ \stackrel{\circ}X \mbox{\ \ and\ \ } \overline{W}
\subset W \cup K \cup (u,v)_A \subset X.
\end{equation}
 Since $f\big((x,y)_A\big)=K_0 \subset X-[u,v]_A$,
there is a connected neighborhood $U_0$ of $(x,y)_A$ in $X$ such
that $f(U_0) \subset \mathbb{R}^2-[u,v]_A$, which with
$f^{-1}(K_0)=(x,y)_A$ implies that
\begin{equation}\label{eq:3-4}
 f\big(U_0-(x,y)_A\big) \subset \mathbb{R}^2-[u,v]_A-K_0
=\mathbb{R}^2-P.
\end{equation}
 Since
$U_0-(x,y)_A$ is connected, $U_0 \cap U_q-(x,y)_A \supset U_0 \cap
U_q-A \neq \emptyset$, and $f(U_q-A) \subset W$, from (\ref{eq:3-4}) we get
\begin{equation} \label{eq:3-5}
f\big(U_0-(x,y)_A\big) \subset W,
\end{equation} which with (\ref{eq:3-3}) implies that
\begin{equation}\label{eq:3-6}
\begin{split}
 X &\supset W \cup K \cup (u,v)_A \supset \overline{W}
\supset  W \cup f\left(\overline{U_0-(x,y)_A}\right)\\
&= W \cup f(\overline {U_0}) \supset W \cup f\big([x,y]_A\big)=W \cup K.
\end{split}
\end{equation}
If $u \neq v$, then $(u, v)_A \neq \emptyset$, and from $K \cap (u,
v)_A=\emptyset$ we see that there is a connect open neighborhood
$V_0$ of $(u, v)_A$ in $X$ such that $V_0 \cap K =\emptyset$,
$\overline{V_0} \cap K=\{u, v\}$, and $\overline{V_0}$ is a disc. It
follows from $V_0 \cap K=\emptyset$ that
\begin{equation} \label{eq:3-7}
V_0-(u,v)_A \subset W \mbox{\ \ or\ \ } \big(V_0-(u,v)_A\big) \cap W=\emptyset.
\end{equation}

If $u \neq v$ and $(u, v)_A \nsubseteq \overline{W}$, then
$V_0-(u,v)_A \nsubseteq W$ and from (\ref{eq:3-7}) we get
$\big(V_0-(u,v)_A\big)\cap W=\emptyset$, which induces that $(u,v)_A \cap
\overline{W}=\emptyset$ and from (\ref{eq:3-6}) we get $\overline{W}=W \cup
K$. By Lemma \ref{arc}, there is an arc $J$ such that
$\stackrel{~\circ}J \subset W$ and $\partial J=\{u, v\}$. Let $E$ be
the disc in $\mathbb{R}^2$ such that $\partial E=J \cup [u,v]_A$.
Then both $\stackrel{~\circ}E \cap W \neq \emptyset$ and
$(\mathbb{R}^2-E) \cap W \neq \emptyset$, and hence both
$\stackrel{~\circ}E \cap K \neq \emptyset$ and $(\mathbb{R}^2-E)
\cap K \neq \emptyset$. Thus $K_0=K-\{u, v\}$ is not connected. But
this contradicts that $K_0$ is a connected set. Therefore, if $u
\neq v$ then $(u, v)_A \subset \overline{W}$, which with (\ref{eq:3-6})
implies
\begin{equation}\label{eq:3-8}
\overline{W}=W \cup K \cup (u,v)_A \subset X.
\end{equation}
 From (\ref{eq:3-8}) we get $\big(V_0-(u,v)_A\big) \cap W \neq
\emptyset$, which with (\ref{eq:3-7}) and $\overline{V_0}\cap K=\{u, v\}$
induces
\begin{equation}\label{eq:3-9}
 V_0-(u,v)_A \subset \overline{V_0}-[u,v]_A \subset W.
\end{equation}
 Note that if $u=v$ then $(u,v)_A=\emptyset$ and we
can take $V_0=\emptyset$. Thus (\ref{eq:3-9}) is still true for the case
$u=v$. From (\ref{eq:3-6}) we see that (\ref{eq:3-8}) is also true for the case $u=v$.}
\end{proof}

\medskip {\textbf{Claim 2.}\ {\it {\rm (C.2.1)}\ There is a
disc $D_0$ in $\mathbb{R}^2$ such that $$[u,v]_A \subset \partial
D_0 \subset K \cup (u,v)_A \subset Bd(W) \subset \overline{W}
\subset D_0 \subset X.$$

{\rm (C.2.2)} The unique unbounded connected component of
$\mathbb{R}^2-P$ is $\mathbb{R}^2-D_0$, and every bounded
connected component of $\mathbb{R}^2-P$ is contained in
$\stackrel{~\circ}D_0$.}

\medskip{\textbf{Proof of Claim 2.}}\ It suffices to show
(C.2.1), since (C.2.1) obviously implies (C.2.2). By (\ref{eq:3-8}) we have
$$K \cup (u,v)_A=K \cup [u,v]_A \subset Bd(W) \subset \overline{W}.$$
Thus it suffices to show that there is a disc $D_0$ in
$\mathbb{R}^2$ satisfying $[u,v]_A \subset \partial D_0 \subset K
\cup (u,v)_A$ and $\overline{W} \subset D_0 \subset X$.

\medskip{\textbf{Case 1.}}\ If $u \neq v$, then there is an arc
$L \subset K$ such that $\partial L=\{u, v\}$. Write $C_0=L \cup (u,
v)_A$. Then $C_0$ is a circle and $C_0 \cup K \cup (u,v)_A \subset
X$. Let $D_0=D(C_0)$ be the disc in $\mathbb{R}^2$ such that
$\partial D_0=C_0$. Then $D_0 \subset X$. Let the neighborhood $V_0$
of $(u,v)_A$ in $X$ be as above. By (\ref{eq:3-9}) we have $W \cap
\stackrel{~\circ}D_0 \supset V_0 \cap \stackrel{~\circ}D_0 \neq
\emptyset$, which with $W \cap \partial D_0 \subset W \cap \big(K
\cup (u,v)_A\big)=\emptyset$ implies that $W \subset
\stackrel{~\circ}D_0$ and hence $\overline{W} \subset D_0$.

\medskip{\textbf{Case 2.}}\ If $u=v$, then by (\ref{eq:3-2}) we have $u
\in (x,y)_A$. Take a point $w_0 \in K_0$. By Lemma \ref{arc}, there is
an arc $J$ such that $\stackrel{~\circ}J \subset W$ and $\partial
J=\{u, w_0\}$. Write
$$G_0=[-1,0] \times [0,1] \mbox{\ \ and\ \ } G_1=[0,1] \times [0,1].$$
Take a homeomorphism $h: \mathbb{R}^2 \rightarrow \mathbb{R}^2$ such
that $h(u)=(0,0)$,
$$h \big([x,y]_A\big)=[-1,1] \times \{0\}, \ \ h(J)=\{0\} \times [0,2],$$
and $h(X) \supset [-2,2] \times [0,2]$. Take $x' \in (x,u)_A$ and
$y' \in (u,y)_A$ such that $h \circ f\big([x,x']_A \cup
[y',y]_A\big) \subset G_0 \cup G_1$. If
$$h \circ f\big([x,x']_A \cup [y', y]_A\big) \subset G_0
\mbox{\ \ or \ \ } h \circ f\big([x,x']_A \cup [y', y]_A\big)
\subset G_1,$$ then $h(\stackrel{~\circ}J)$ is contained in the
unbounded connected component of $\mathbb{R}^2-h \circ
f\big([x,y]_A\big)$, and hence $\stackrel{~\circ}J$ is contained in
the unbounded connected component of $\mathbb{R}^2-f
\big([x,y]_A\big)=\mathbb{R}^2-K=\mathbb{R}^2-K-[u,v]_A$. But this
contradicts with $\stackrel{~\circ}J \subset W$. Thus we have
\begin{equation}\label{eq:3-10}
 h \circ f \big([x,x']_A\big) \subset G_0 \mbox{\ \ and\ \ }
h \circ f \big([y',y]_A\big) \subset G_1.
\end{equation}
 or
\begin{equation}\label{eq:3-11}
 h \circ f \big([x,x']_A\big) \subset G_1 \mbox{\ \ and\ \ }
h \circ f \big([y',y]_A\big) \subset G_0.
\end{equation}
Take arcs $L_1, L_2$ and $L_3$ such that
$$L_1 \subset h \circ f\big([x,x']_A\big), \ \ L_2 \subset h \circ f\big([x',y']_A\big), \ \
L_3 \subset h \circ f \big([y', y]_A\big)$$ and
$$\partial L_1=\{h \circ f(x), h \circ h(x')\}, \ \ \partial L_2=\{h \circ f(x'), h \circ f(y')\}, \ \
\partial L_3=\{h \circ f(y'), h \circ f(y)\}.$$
Take arcs $L_4 \subset L_1$, $L_5 \subset L_2$ and $L_6 \subset L_3$
such that $$h \circ f(x)=h(u)=(0,0) \in \partial L_4,\ h \circ
f(y)=h(v)=(0,0) \in \partial L_6,$$ $L_4 \cap L_5=\partial L_4 \cap
\partial L_5 \neq \emptyset$, and $L_5 \cap L_6=\partial L_5 \cap \partial L_6 \neq
\emptyset$. Let $Q=L_4 \cup L_5 \cup L_6$. Then $Q$ is a circle, $Q
\subset L_1 \cup L_2 \cup L_3 \subset h \circ
f\big([x,y]_A\big)=h(K)$, and $h(\stackrel{~\circ}J)=\{0\} \times
(0,2)$ is contained in the interior of the disc $D(Q)$. Let
$C_0=h^{-1}(Q)$, and $D_0=h^{-1}(D(Q))$. Then $[u,v]_A=\{u\} \subset
\partial D_0=C_0 \subset K=K \cup [u,v]_A \subset X$, $D_0 \subset
X$, and $\stackrel{~\circ}J \subset W \cap \stackrel{~\circ}D_0$,
which with $W \cap \partial D_0=\emptyset$ implies $W \subset
\stackrel{~\circ}D_0$ and hence $\overline{W} \subset D_0$.

\medskip Therefore, no matter whether $u \neq v$ or $u=v$,
(C.2.1) is true, and Claim 2 is proved. \hfill$\Box$

\medskip {\textbf{Claim 3.}\ {\it Let $\Omega=\partial
D_0-[u,v]_A$. Let $V$ be a bounded connected component of
$\mathbb{R}^2-P$ but $V \neq W$. Then

\medskip {\rm (C.3.1)}\ $\Omega$ is an open arc contained in
$K_0$.

\medskip{\rm (C.3.2)} $Bd(V) \cap \Omega$ contains at most one
point.

\medskip{\rm (C.3.3)} $Bd(V) \cap [u,v]_A=\emptyset$.

\medskip{\rm (C.3.4)} $Bd(V) \subset K_0$.

\medskip{\rm (C.3.5)} $f(X) \cap V=\emptyset$.}

\medskip{\textbf{Proof of Claim 3.}}\ (1)\ Since $\partial
D_0 \subset K \cup (u,v)_A=K_0 \cup [u,v]_A$, (C.3.1) is clear.

\medskip (2)\ If the conclusion (C.3.2) is not true, then $Bd(V)
\cap \Omega$ contains two different points $w$ and $z$. Take an
imbedding $\lambda: [0,3] \rightarrow \Omega$ such that
$\lambda(0)=\omega$ and $\lambda(2)=z$. Write $w'=\lambda(1)$ and
$z'=\lambda(3)$. Then $\{w', z'\} \subset \Omega \subset K_0 \subset
\partial D_0 \subset Bd(W)$. By Lemma \ref{arc} and Claim 2, there exist
arcs $\Gamma \subset \overline{V} \subset D_0$ and $\Lambda \subset
\overline{W} \subset D_0$ such that
$$\partial \Gamma=\{w,z\}, \ \ \stackrel{\circ}\Gamma \subset V, \ \
\partial \Lambda=\{w', z'\}, \mbox{\ and\ } \stackrel{\circ}\Lambda \subset W,$$
which implies that $\Gamma \cap \Lambda=\stackrel{\circ}\Gamma \cap
\stackrel{\circ}\Lambda \subset W \cap V=\emptyset$. On the other
hand, let $\prec$ be the order on $\lambda\big([0,3]\big)$ induced
by $\lambda$. Since $w \prec w' \prec z \prec z'$ and $\Gamma \cup
\Lambda \subset D_0$, we must have $\Gamma \cap \Lambda \neq
\emptyset$. These lead to a contradiction. Thus (C.3.2) is true.

\medskip (3)\ Let the open neighborhood $V_0$ of $(u,v)_A$ in
$X$ be as above. From (\ref{eq:3-9}) we get
\begin{equation}\label{eq:3-12}
\big(V_0-(u,v)_A\big) \cap V \subset W \cap V=\emptyset.
\end{equation}
Since $V$ is open in $\mathbb{R}^2$, from (\ref{eq:3-12}) we
get
\begin{equation}\label{eq:3-13}
V_0 \cap V \subset \overline{V_0-(u,v)_A} \cap
V=\emptyset.
\end{equation}
 Since $V_0$ is open in
$X$ and $\overline{V} \subset X$, from (\ref{eq:3-13}) we get
\begin{equation}\label{eq:3-14}
(u,v)_A \cap Bd(V) \subset V_0 \cap \overline{V}=\emptyset.
\end{equation}
Note that $V$ is a connected component of
$\mathbb{R}^2-K-(u,v)_A$. Thus $Bd(V) \subset K \cup (u,v)_A$, which
with (\ref{eq:3-14}) induces $Bd(V) \subset K$. Hence, for proving (C.3.3),
it suffices to show $\{u,v\} \cap Bd(V)=\emptyset$, and further, by
symmetry, it suffices to show $u \notin Bd(V)$.

\medskip Assume on the contrary that $u \in Bd(V)$. Take a point
$z_0 \in Bd(V)-\{u\}$. By Lemma \ref{arc}, there is an arc $\Gamma$ with
$\partial \Gamma=\{u, z_0\}$ and $\stackrel{\circ}\Gamma \subset V$.
Write $a=|z_0-u|$. Choose $b \in (0, a/2]$ such that
$|f(w)-f(z)|<a/2$ for any $w, z \in X$ with $|w-z| \leq 2b$.

\medskip{\textbf{Case 1.}}\ If $u \neq v$, take $x_0 \in
(x,y)_A$ such that $[x, x_0]_A \subset B(x,b)$. Since $f
\left([x_0, y]_A\right)$ is a compact set in $K-\{u\}$, there exists
$c \in (0, b]$ such that $B(u,c) \cap f \left([x_0,
y]_A\right)=\emptyset$, which with $Bd(V)-\{u,v\} \subset K_0=f
\left((x,y)_A\right)$ implies that
\begin{equation}\label{eq:3-15}
\begin{split}
 Bd(V) \cap B(u,c)-\{u\} &= Bd(V) \cap B(u,c)-\{u,v\} \\
&= B(u,c) \cap K_0 \subset f \big((x,x_0)_A\big).
\end{split}
\end{equation}

{\textbf{Case 2.}}\ If $u=v$, then we take a point $w_0 \in
\Omega-Bd(V)$ and take an arc $J$ such that $\stackrel{~\circ}J
\subset W$ and $\partial J=\{u, w_0\}$. Let the squares $G_0$ and
$G_1$, the homeomorphism $h: \mathbb{R}^2 \rightarrow \mathbb{R}^2$,
and the points $x' \in (x,u)_A$ and $y' \in (u, y)_A$ be the same as
in the proof of Claim 2. By symmetry we may assume that (\ref{eq:3-11})
holds. By Lemma \ref{arc}, there is an arc $J'$ in the sphere
$\mathbb{R}^2 \cup \{\infty\}$ such that $\partial J'=\{w_0,
\infty\}$ and $J' \cap K=J' \cap C_0=\{w_0\}$. This leads to $Bd(V)
\cap \big(J' \cup J-\{u\}\big) \subset \overline{V }\cap (J' \cup
W)=\emptyset$. Thus there exists $r>0$ such that
$$Bd(V) \cap B(u,r) \subset h^{-1}(G_0) \mbox{\ \ or \ \ } Bd(V) \cap B(u,r) \subset h^{-1}(G_1).$$
By symmetry, we may assume
\begin{equation} \label{eq:3-16}
Bd(V) \cap B(u,r) \subset h^{-1}(G_1).
\end{equation}
Choose $x_0 \in (x,x')_A$ such that $[x,x_0]_A
\subset B(x,b)$. Similar to Case 1, there exists $c \in \big(0,
\min\{r, b\}\big]$ such that
$$B(u,c) \cap f \big([x_0, y']_A\big)=\emptyset,$$
which with $f \big((x,y)_A\big)=K_0$ implies that
\begin{equation}\label{eq:3-17}
 B(u,c) \cap K_0 \subset f \big((x,x_0)_A\big) \cup f \big((y', y)_A\big).
\end{equation}
 Noting $Bd(V)-\{u\} \subset K_0$, from (\ref{eq:3-17}),
(\ref{eq:3-11}) and (\ref{eq:3-16}) we see that (\ref{eq:3-15}) also holds in Case 2.

\medskip Clearly, no matter whether in Case 1 or in Case 2,
there exist points $u_0$ and $v_0$ in $Bd(V) \cap B(u,c)-\{u\}
\subset B(u,c) \cap f \big((x,x_0)_A\big)$ such that the following
property holds:

\medskip (P.1)\ For any arc $L$ with $\partial L=\{u_0, v_0\}$
and $\stackrel{~\circ} L \subset V$, it holds that $L \cap
\stackrel{\circ}\Gamma \neq \emptyset$, that is, $u_0$ and $v_0$ lie
on different sides of $\Gamma$.

\medskip From (P.1) we get

\medskip (P.2)\ If $L$ is an arc in $K_0$ with $\partial
L=\{u_0, v_0\}$, then $L \cap \stackrel{\circ}\Gamma=\emptyset$, and
hence $\mathrm{diam}(L) \geq |z_0-u_0| \geq a-c \geq a/2$.

\medskip On the other hand, it follows from (\ref{eq:3-15}) that there
exist points $x_1$ and $x_2$ in $(x, x_0)_A$ such that $f(x_1)=u_0$
and $f(x_2)=v_0$. Hence there is an arc $L \subset f \big([x_1,
x_2]_A\big) \subset K_0$ such that $\partial L=\{u_0, v_0\}$. Since
$[x_1, x_2]_A \subset (x, x_0]_A \subset B(x,b)$, we have
$\mathrm{diam}(L) \leq \mathrm{diam}\Big(f\big([x_1,
x_2]_A\big)\Big) \leq \mathrm{diam}\Big(f\big(B(x,b)\big)\Big)<
a/2$. But this contradicts (P.2). Thus we must have $u \notin
Bd(V)$, and hence the conclusion (C.3.3) is true.

\medskip (4) Since $Bd(V) \subset K \cup (u,v)_A=K_0 \cup
[u,v]_A$, conclusion (C.3.4) is a direct corollary of (C.3.3).

\medskip (5) If $f(X) \cap V \neq \emptyset$, take an arc $L$
with $\partial L=\{w, z\}$ such that $f(w) \in W$, $f(z) \in V$, and
$L \subset X-(x,y)_A$. Then by (C.3.4) we have $f(L) \cap K_0
\supset f(L) \cap Bd(V) \neq \emptyset$. On the other hand, since
$f^{-1}(K_0)=(x,y)_A$ and $L \cap (x,y)_A=\emptyset$, we have $f(L)
\cap K_0=\emptyset$. These lead to a contradiction. Thus conclusion
(C.3.5) holds, and Claim 3 is proved. \hfill$\Box$

\medskip {\textbf{Claim 4.}\ \ {\it {\rm (C.4.1)}\ For each
$z \in K_0$, there exists an open neighborhood $U_z$ of $z$ in
$\mathbb{R}^2$ such that $U_z \cap [u,v]_A=\emptyset$ and $U_z \cap
f(X) \subset W \cup K_0$.

\medskip {\rm (C.4.2)}\ Let the open arc $\Omega=\partial
D_0-[u,v]_A \subset K_0$ be as in Claim 3, let
$U_{\Omega}=\bigcup\{U_z: z \in \Omega\}$. Then $U_{\Omega}$ is an
open neighborhood of $\Omega$ in $\mathbb{R}^2$, $U_{\Omega} \cap
[u,v]_A=\emptyset$, and $$U_{\Omega} \cap f(X) \subset W \cup K_0
\subset \stackrel{~\circ}D_0 \cup K_0=D_0-[u,v]_A.$$}

{\textbf{Proof of Claim 4.}}\ Evidently, (C.4.1)
implies (C.4.2). Thus it suffices to show (C.4.1). Let
$Y_z=f^{-1}(z)$. Then $Y_z$ is a nonempty compact set and $Y_z
\subset f^{-1}(K_0)=(x,y)_A$. If (C.4.1) does not hold, then there
exist points $w_1, w_2, w_3, \cdots$ in $X$ and $w \in Y_z$ such
that $\lim_{n \rightarrow \infty} f(w_n)=z$,
\begin{equation} \label{eq:3-18}
f\big(\{w_1, w_2, \cdots\}\big) \subset \mathbb{R}^2-W-K_0.
\end{equation}
 and $\lim_{n \rightarrow \infty} w_n=w$. Let the
neighborhood $U_0$ of $(x,y)_A$ in $X$ be the same as in (\ref{eq:3-5}). Then
there exists $n \in \mathbb{N}$ such that $w_n \in U_0-(x,y)_A$, and
from (\ref{eq:3-5}) we obtain $f(w_n) \in f(U_0-(x,y)_A) \subset W$. But this
contradicts with (\ref{eq:3-18}). Thus (C.1) holds, and Claim 4 is
proved. \hfill$\Box$

\medskip Let $D_0, \Omega$ and $U_{\Omega}$ be as above. Write
\begin{equation}\label{eq:3-19} V_{\Omega}=U_{\Omega}-D_0,\ \ Y_0=\mathbb{R}^2-U_{\Omega}-\stackrel{~\circ}D_0,
\ \ Y=Y_0 \cup D_0.
\end{equation}
 Then $V_{\Omega}$
is an open set in $\mathbb{R}^2-D_0$, $U_{\Omega}=(U_{\Omega} \cap
\stackrel{~\circ}D_0) \cup \Omega \cup V_{\Omega}$, $Y_0$ and $Y$
are closed in $\mathbb{R}^2$,
$Y_0=\mathbb{R}^2-V_{\Omega}-\Omega-\stackrel{~\circ}D_0=(\mathbb{R}^2-V_{\Omega}-D_0)\cup
[u,v]_A$, and $Y=\mathbb{R}^2-V_{\Omega}$. Write
\begin{equation}\label{eq:3-20}
\begin{split}
\mathbb{V}&= \bigcup \big\{ V: V \mbox{\ is a bounded
connected
component}\\
& \ \ \ \ \ \ \ \ \ \ \ \mbox{\ of\ } \mathbb{R}^2-K-[u,v]_A \mbox{\ but\ } V
\neq W \big\}.
\end{split}
\end{equation}
Then $\mathbb{V}$ is an open set in $\stackrel{~\circ}D_0$. By Claim
2.1.2, we have
\begin{equation}\label{eq:3-21}
 D_0=K_0 \cup [u,v]_A \cup W \cup \mathbb{V}.
\end{equation}
 By Claim 4, we get $f(X) \cap V_{\Omega}=f(X)
\cap U_{\Omega}-D_0=\emptyset$, which with the conclusion (C.3.5) of
Claim 3 implies
\begin{equation} \label{eq:3-22}
f(X) \subset \mathbb{R}^2-V_{\Omega}-\mathbb{V}=Y-V_{\Omega} \subset Y.
\end{equation}

\medskip {\textbf{Claim 5.}\ \ {\it If $\{u,v\} \subset
[x,y]_A$, then
$$\mathrm{Fix}(f) \cap D_0=\mathrm{Fix}(f) \cap W \neq \emptyset.$$}

\vspace{-5mm}{\textbf{Proof of Claim 5.}}\ Let $Y_0$ and $Y$ be
as above. Since $[u,v]_A$ is an absolute retract, there exists a
retraction $\gamma: Y_0 \rightarrow [u,v]_A$. Since $Y_0 \cap
D_0=[u,v]_A$ and both $Y_0$ and $D_0$ are closed subsets of $Y$, the
retraction $\gamma$ can be extended to a retraction $\xi: Y
\rightarrow D_0$ by $\xi|Y_0=\gamma$ and $\xi|D_0=\mathrm{id}$. By
(\ref{eq:3-22}), we can define a continuous map $\varphi: D_0 \rightarrow
D_0$ by $\varphi=\xi \circ f|D_0$.
By Brouwer fixed point theorem, we have
$\mathrm{Fix}(\varphi) \neq \emptyset$.

\medskip From (\ref{eq:3-22}) we get $\varphi(\mathbb{V})=\xi \circ
f(\mathbb{V}) \subset \xi(Y-\mathbb{V})=D_0-\mathbb{V}$. Thus
$\mathrm{Fix}(\varphi) \cap \mathbb{V}=\emptyset$. Since
$\varphi^{-1}(K_0)=f^{-1}\circ \xi^{-1}(K_0)=f^{-1}(K_0)=(x,y)_A
\subset D_0-K_0$, we have $\mathrm{Fix}(\varphi) \cap
K_0=\emptyset$.

\medskip For any $w \in [u,v]_A$, if $w \in (x,y)_A$ then
$\varphi(w)=\xi \circ f(w) \in \xi \circ
f\big((x,y)_A\big)=\xi(K_0)=K_0 \subset D_0-[u,v]_A$; if $w=x$ then
$\varphi(w)=\xi \circ f(x)=\xi(u)=u \neq x$; if $w=y$ then we also
have $\varphi(w)=\xi \circ f(y)=\xi(v)=v \neq y$. Thus
$\mathrm{Fix}(\varphi) \cap [u,v]_A =\emptyset$.

\medskip To sum up, we get $\mathrm{Fix}(\varphi) \cap
\big(\mathbb{V} \cup K_0 \cup [u,v]_A\big)=\emptyset$, which with
(\ref{eq:3-21}) implies $\mathrm{Fix}(\varphi) \cap W=\mathrm{Fix}(\varphi)
\neq \emptyset$. For any $w \in W$, if $f(w) \in Y-D_0$ then
$\varphi(w)=\xi \circ f(w) \in \xi(Y-D_0) \subset \xi (Y_0)=[u,v]_A
\subset D_0-\{w\}$. Thus if $w \in \mathrm{Fix}(\varphi)$ then $f(w)
\in D_0$ and hence $f(w)=\xi \circ f(w)=\varphi(w)=w \in
\mathrm{Fix}(f)$. Conversely, if $w \in \mathrm{Fix}(f) \cap D_0$,
then $\varphi(w)=\xi \circ f(w)=\xi(w)=w \in \mathrm{Fix}(\varphi)$.
Hence we have $\mathrm{Fix}(f) \cap
D_0=\mathrm{Fix}(\varphi)=\mathrm{Fix}(\varphi) \cap
W=\mathrm{Fix}(f) \cap D_0 \cap W=\mathrm{Fix}(f) \cap W \neq
\emptyset$. Claim 5 is proved. \hfill$\Box$

\medskip In Claim 5 we have discussed the case that $[u,v]_A
\subset [x,y]_A$. In the following we will consider the case
$[u,v]_A \nsubseteq [x,y]_A$. Recall that $\prec$ is the order on
$A$ induced by the homeomorphism $\lambda: A \rightarrow [0,1]$.
Since $x \prec y$, from (\ref{eq:3-2}) we see that $[u,v]_A \nsubseteq
[x,y]_A$ if and only if one of the following conditions holds:

\medskip (2)\ $x \preceq v \prec y \prec u$;

\medskip (3)\ $v \prec x \prec u \preceq y$;

\medskip (4)\ $v \prec x \prec y \prec u$;

\medskip (5)\ $u \prec x \prec y \prec v$.

\medskip As mentioned at the beginning  of Section 2, we regard any point $t \in \mathbb{R}$
and the point $(t,0) \in \mathbb{R}^2$ as the same, that is, we
regard $\mathbb{R}$ as a subspace of $\mathbb{R}^2$. For $r,s \in
\mathbb{R}$ with $r<s$, we write $[s,r]=[r,s]$. Hence, for any $w, z
\in \mathbb{R}$, we have both $[w,z] \subset \mathbb{R}$ and
$[w,z]=[w,z] \times \{0\} \subset \mathbb{R}^2$. We also have both
$[w,z] \times [r,s] \subset \mathbb{R}^2$ and $[w,z] \times
[r,s]=[w,z] \times [r,s] \times \{0\} \subset \mathbb{R}^3$, but we
cannot have $[w,z] \times [r,s]=[w,z] \times \{0\} \times [r,s]
\subset \mathbb{R}^3$.

\medskip We can take a homeomorphism $h: \mathbb{R}^2
\rightarrow \mathbb{R}^2$ such that

\medskip (h.1)\ $h(X)=[0,8] \times [0,4]$, $h(A) \subset [1,7]
\big(=[1,7] \times \{0\}\big)$;

\medskip (h.2)\ $h \big([x,y]_A\big) \subset [3,5]$ with
$h(x)<h(y)$.

\medskip Write $X'=h(X)$ and $W'=h(W)$. Let $f'=h \circ f \circ
h^{-1}|X': X' \rightarrow \mathbb{R}^2$. Then $\mathrm{Fix}(f')=h
\big(\mathrm{Fix}(f)\big)$ and $\mathrm{Fix}(f') \cap
W'=h\big(\mathrm{Fix}(f) \cap W\big)$. Therefore, for the
convenience of statement, in the following we may directly assume
that

\medskip (H.1)\ $X=[0,8] \times [0,4]$, and $A \subset [1,7]$;

\medskip (H.2)\ $[x,y]_A \subset [3,5]$ with $x<y$.

\medskip\noindent  In addition, for any $w, z \in A$, since
we assume $A \subset [1,7] \subset \mathbb{R}$, in the following we
can write $[w,z]$ for $[w,z]_A$.

\medskip {\textbf{Claim 6.}\ \ {\it If $x \leq v<y<u$, and
$f\big([y,u]\big) \cap [y,u]=\emptyset$, then $\mathrm{Fix}(f) \cap
D_0=\mathrm{Fix}(f) \cap W \neq \emptyset$.}

\medskip{\textbf{Proof of Claim 6.}}\ For $n=1,2$, define
the projection $p_n: \mathbb{R}^2 \rightarrow \mathbb{R}$ by
$p_n(z)=r_n$ for any $z=(r_1, r_2) \in \mathbb{R}^2$. Since
$f\big([y,u]\big) \cap [y,u]=\emptyset$, there exists $a \in \big(0,
(y-v)/2\big]$ such that
\begin{equation} \label{eq:3-23}
f\big([y-a, u+a] \times [0,a]\big) \cap \big([y-a,u+a] \times [0,a]\big)=\emptyset.
\end{equation}
Write $E=[y-a, u+a] \times [0,a]$. Take a
homeomorphism $\psi: X \rightarrow X$ satisfying the following three
conditions:
\begin{equation}\label{eq:3-24}
 \psi|(X-E)=\mathrm{id} \mbox{\ \ and\ \ } \psi(E)=E,
 \end{equation}
\begin{equation}\label{eq:3-25}
 p_1(\psi(z)) \leq p_1(z) \mbox{\ \ and\ \ } p_2(\psi(z))=p_2(z) \mbox{\ \ for any\ \ } z \in E,
\end{equation}
\begin{equation} \label{eq:3-26}
\psi([x,u])=[x,y] \mbox{\ \ with\ \ } \psi(x)=x \mbox{\ \ and\ \ } \psi(u)=y.
\end{equation}
 Let $F=f \circ \psi: X \rightarrow \mathbb{R}^2$. By
(\ref{eq:3-24}) we have
\begin{equation} \label{eq:3-27} F|(X-E)=f|(X-E) \mbox{\ \ and\ \ } F(E)=f(E),
 \end{equation} which implies
\begin{equation} \label{eq:3-28} \mathrm{Fix}(F)-E=\mathrm{Fix}(f)-E.
\end{equation} From (\ref{eq:3-26}) we obtain
\begin{equation} \label{eq:3-29} F\big([x,u]\big)=f\big([x,y]\big)=
K \mbox{\ \ with\ \ } F(x)=u \mbox{\ \ and\ \ } F(u)=v,
\end{equation}
\begin{equation} \label{eq:3-30} F^{-1}(K_0)=\psi^{-1} \circ f^{-1}(K_0)=\psi^{-1}\big((x,y)\big)=(x,u).
\end{equation}
 Let the neighborhood $U_0$ of $(x,y)_A$ in $X$ be
the same as in (2.5). Write $U_{01}=\psi^{-1}(U_0)$. Then $U_{01}$
is a neighborhood of $(x,u)$ in $X$, and by (2.26) and (2.5) we get
$$F\big(U_{01}-(x,u)\big)=f\big(U_0, (x,y)\big) \subset W.$$
Hence, from Claim 5 we obtain
\begin{equation}\label{eq:3-31}
 \mathrm{Fix}(F)\cap D_0=\mathrm{Fix}(f) \cap W \neq \emptyset.
 \end{equation}
  From (\ref{eq:3-23}) and (\ref{eq:3-27}) we get
\begin{equation} \label{eq:3-32}
\mathrm{Fix}(f) \cap E =\emptyset,
\end{equation}
\begin{equation} \label{eq:3-33} F(E) \cap E=f(E) \cap E=\emptyset.
\end{equation}
 We note that (\ref{eq:3-33}) implies
\begin{equation} \label{eq:3-34} \mathrm{Fix}(F) \cap E =\emptyset.
 \end{equation}
 By (\ref{eq:3-32}), (\ref{eq:3-28}) and (\ref{eq:3-34}) we obtain
$$\mathrm{Fix}(f)=\mathrm{Fix}(f)-E=\mathrm{Fix}(F)-E=\mathrm{Fix}(F),$$
which with (\ref{eq:3-31}) implies $\mathrm{Fix}(f) \cap D_0=\mathrm{Fix}(f)
\cap W \neq \emptyset$. Claim 6 is proved. \hfill$\Box$

\medskip By symmetry, from Claim 6 we get

\medskip {\textbf{Claim 7.}\ \ {\it If $v < x < u \leq y$,
and $f\big([v,x]\big) \cap [v,x]=\emptyset$, then $\mathrm{Fix}(f)
\cap D_0=\mathrm{Fix}(f) \cap W \neq \emptyset$.}

\medskip {\textbf{Claim 8.}\ \ {\it If $v<x<y<u$, and
$f\big([v,x]\big) \cap [v,x]=f\big([y,u]\big) \cap [y,u]=\emptyset$,
then $\mathrm{Fix}(f) \cap D_0=\mathrm{Fix}(f) \cap W \neq
\emptyset$.}

\medskip{\textbf{Proof of Claim 8.}}\ Let the homeomorphism
$\psi: X \rightarrow X$ be the same as in the proof of Claim 6,
and let $F=f \circ \psi: X \rightarrow \mathbb{R}^2$. From the proof
of Claim 6 we see that the equality
$\mathrm{Fix}(F)=\mathrm{Fix}(f)$ still holds, although the
condition $x \leq v<y$ is replaced by $v<x<y$. From Claim 7 we
get $\mathrm{Fix}(F) \cap D_0=\mathrm{Fix}(F) \cap W \neq
\emptyset$. Hence $\mathrm{Fix}(f) \cap D_0=\mathrm{Fix}(f) \cap W
\neq \emptyset$. Claim 8 is proved. \hfill$\Box$

\medskip {\textbf{Claim 9.}\ \ {\it If $u<x<y<v$, and
$f\big([u,x) \cup (y,v]\big) \subset \stackrel{~\circ}D_0$ or
$f\big([u,x) \cup (y,v]\big) \subset \mathbb{R}^2-D_0$, then
$\mathrm{Fix}(f) \cap D_0=\mathrm{Fix}(f) \cap W \neq \emptyset$.}

\medskip{\textbf{Proof of Claim 9.}}\ Noting that
$f^{-1}(K_0)=(x,y)$, from (\ref{eq:3-21}) and (\ref{eq:3-22}) we see that

\medskip (i) $f\big([u,x) \cup (y,v]\big) \subset
\stackrel{~\circ}D_0$ if and only if $f\big([u,x) \cup (y,v]\big)
\subset W$;

\medskip (ii) $f\big([u,x) \cup (y,v]\big) \subset
\mathbb{R}^2-D_0$ if and only if $f\big([u,x) \cup (y,v]\big)
\subset \mathbb{R}^2-W-K-[u,v]$.

\medskip We now begin the proof of the claim. Let $Y_0$ and $Y$
be the same as in (\ref{eq:3-19}).

\medskip If $f\big([u,x) \cup (y,v]\big) \subset
\stackrel{~\circ}D_0$, then we take a retraction $\gamma: Y_0
\rightarrow [u,v]_A$ and define $\xi: Y \rightarrow D_0$ by
$\xi|Y_0=\gamma$ and $\xi|D_0=\mathrm{id}$. Let $\varphi=\xi \circ
f|D_0: D_0 \rightarrow D_0$. Similar to the proof of Claim 5, we
have $\mathrm{Fix}(f) \cap
D_0=\mathrm{Fix}(\varphi)=\mathrm{Fix}(\varphi) \cap
W=\mathrm{Fix}(f) \cap D_0 \cap W=\mathrm{Fix}(f) \cap W \neq
\emptyset$.

\medskip In the following we consider the case $f\big([u,x) \cup
(y,v]\big) \subset \mathbb{R}^2-D_0$. Since we attend to
$\mathrm{Fix}(f) \cap D_0$ only, we can replace $f$ by $f|D_0: D_0
\rightarrow \mathbb{R}^2$ if $X-D_0 \neq \emptyset$. Therefore, in
the following we may directly assume
$$D_0=X=[0,8] \times [0,4].$$

Let $\Omega=\partial D_0-[u,v]$ be as in Claim 3. Let
$V_{\Omega}=U_{\Omega}-D_0$ be as in (\ref{eq:3-19}). Let $\mathbb{V}$ be as
in (\ref{eq:3-20}). Take an arc $J$ such that $\stackrel{~\circ}J \subset V_\Omega$
and $\partial J=\{u,v\}$. Write $C_1=J \cup \Omega$. Then $C_1$ is a
circle. Let $D_1=D(C_1)$ be the disc in $\mathbb{R}^2$ such that
$\partial D_1=C_1$. Then $D_1 \cap D_0=\overline{\Omega}=\Omega \cup
\{u,v\}$.

\medskip Let $D_2=[u,v] \times [-4, 0]$, and $D_3=D_2 \cup D_0$.
Consider the sphere $\mathbb{R}^{\;2}_{\infty}=\mathbb{R}^2 \cup
\{\infty\}$. Take a homeomorphism $h: \mathbb{R}^{\;2}_{\infty}
\rightarrow \mathbb{R}^{\;2}_{\infty}$ such that
$$h|D_0=\mathrm{id}, \ \ \ \ h(\stackrel{~\circ}D_1)=\mathbb{R}^{\;2}_{\infty}-D_3,$$
and
$$h(\mathbb{R}^{\;2}_{\infty}-D_0-D_1)=\stackrel{~\circ}D_2.$$
Let $\varphi=h \circ f: D_0 \rightarrow \mathbb{R}^{\;2}_{\infty}$.
Then we have
\begin{equation}\label{eq:3-35} \varphi(x)=u,\ \ \varphi(y)=v, \ \ \varphi\big([x,y]\big)=K,
\end{equation}
\begin{equation}\label{eq:3-36} \varphi^{-1}(K_0)=(x,y),\ \ \mathrm{Fix}(\varphi)=\mathrm{Fix}(f),\end{equation}
 and by (\ref{eq:3-22}) we have
\begin{equation}\label{eq:3-37}
\begin{split}\varphi(D_0)=h \circ f(D_0) &\subset
h(\mathbb{R}^2-V_{\Omega}-\mathbb{V}) \subset
h(\mathbb{R}^{\;2}_{\infty}-\stackrel{~\circ}D_1)-\mathbb{V}\\
&= \mathbb{R}^{\;2}_{\infty}-h(\stackrel{~\circ}D_1)-\mathbb{V}=D_2
\cup D_0-\mathbb{V}. \end{split}
\end{equation}
\begin{equation}\label{eq:3-38}
\begin{split}\varphi\big([u,x) \cup (y,v]\big)&= h \circ
f\big([u,x)\cup (y,v]\big) \subset h(\mathbb{R}^2-D_0-V_{\Omega})\\
&\subset h(\mathbb{R}^2-D_0-D_1)=\stackrel{~\circ}D_2.
 \end{split}
\end{equation}
Write $E_1=[u,x] \times [-1,0]$, and $E_2=[y,v] \times [-1,0]$. For
any point $w \in [u,v]$ and $r \in \mathbb{R}$, write $w_r=(w,-r)$.
Then $w_0=w$. Write
$$J_1=[u,x], \ \ J_2=[u,u_1],\ \ J_3=[u_1,x_1], \ \ J_4=[x_1, x],$$
$$J_5=[y,v], \ \ J_6=[y,y_1],\ \ J_7=[y_1,v_1], \ \ J_8=[v_1, v].$$
Then $\partial E_1=J_1 \cup J_2 \cup J_3 \cup J_4$, $\partial
E_2=J_5 \cup J_6 \cup J_7 \cup J_8$.

\medskip For any $c, c' \in \mathbb{R}$ and any $z=(t,r),
z'=(t',r') \in \mathbb{R}^2$, write $cz+c'z'=(ct+c't', cr+c'r')$.
For any $w \in [u,x]$, write $a_{w}=u+x-w$ and $b_w=\min\{w-u,
x-w,1\}$. For any $w \in [y,v]$, write $a_w=y+v-w$ and
$b_w=\min\{w-y, v-w, 1\}$. Let $D_4=D_0 \cup E_1 \cup E_2$. By
(\ref{eq:3-37}) and (\ref{eq:3-38}), we can define a continuous map $\psi: D_4
\rightarrow D_3-\mathbb{V}$ as follows:

\medskip (a)\ Let $\psi|D_0=\varphi$, and let $\psi(w_1)=(a_w,
-1-b_w)$ for any $w \in [u,x] \cup [y,v]$;

\medskip (b)\ For any $w \in [u,x] \cup [y,v]$ and any $r \in
[0,1]$, let $\psi(w_r)=(1-r)\psi(w_0)+r\psi(w_1)$.

\medskip Let $p_2: \mathbb{R}^2 \rightarrow \mathbb{R}$ be the
projection defined by $p_2(z)=r_2$ for any $z=(r_1,r_2) \in
\mathbb{R}^2$. From (\ref{eq:3-34}), (\ref{eq:3-37}) and the definition of $\psi$ we
see that $\psi$ has the following properties:

\medskip (P.1)\ \ $\psi(J_4)=J_2$ with $\psi(x_r)=u_r$ and
$\psi(J_6)=J_8$ with $\psi(y_r)=v_r$ for any $r \in [0,1]$, and
$\psi(u_1)=x_1$ with $\psi(v_1)=y_1$;

\medskip (P.2)\ \ $p_2 \circ \psi(z)<p_2(z)$ for any $z \in E_1
\cup E_2-J_4-J_6-\{u_1,v_1\}$;

\medskip (P.3)\ \ It follows from (P.1) and (P.2) that
$\mathrm{Fix}(\psi) \cap (E_1 \cup E_2)=\emptyset$, and hence
$\mathrm{Fix}(\psi)=\mathrm{Fix}(\varphi)=\mathrm{Fix}(f)$.

\medskip Let $J_9=\psi(J_3)$ and $J_{10}=\psi(J_7)$. Then $J_9
\cap J_3=\{u_1, x_1\}$ and $J_{10} \cap J_7=\{y_1, v_1\}$. Let
$C_3=J_3 \cup J_9$ and $C_4=J_7 \cup J_{10}$. Then $C_3$ and $C_4$
are circles. For $n=3,4$, let $E_n=D(C_n)$ be the disc in
$\mathbb{R}^2$ such that $\partial E_n=C_n$. Write $D_5=D_4 \cup E_3
\cup E_4$. Then $D_4 \subset D_5 \subset D_3$. Let $L=J_9 \cup J_4
\cup [x,y] \cup J_6 \cup J_{10}$. Then $L$ is an arc in $\partial
D_5$. Clearly, there is a retraction $\zeta: D_3 \rightarrow D_5$
such that $\zeta(D_3-D_5) \subset L$. Let $\phi=\zeta \circ \psi:
D_4 \rightarrow D_5$. Since $\psi(L \cap D_4) \subset D_4$, we have
$\mathrm{Fix}(\phi)=\mathrm{Fix}(\psi)$, which with (P.3) implies
$\mathrm{Fix}(\phi)=\mathrm{Fix}(f)$. Let
$$B=B(0,5),\ \ B_1=B \cap \big([-5,-3] \times [-5,5]\big),$$
$$B_2=B \cap \big([-3,3] \times [-5, 5]\big), \ \ B_3=B \cap \big([3,5] \times [-5,5]\big),$$
$$A_1=\{-3\} \times [-4,4], \ \ A_2=\partial B \cap \big([-5,5] \times [-5,-4]\big),$$
$$A_3=\{3\} \times [-4,4], \ \ A_4=\partial B \cap \big([-5,5] \times [4,5]\big).$$
Take a homeomorphism $\varsigma: D_5 \rightarrow B$ such that
$$\varsigma(E_3)=B_1, \ \ \varsigma(J_3)=A_1, \ \ \varsigma(D_4)=B_2,$$
$$\varsigma(L \cap D_4)=A_2, \ \ \varsigma(J_7)=A_3, \mbox{\ \ and\ \ } \varsigma(E_4)=B_3.$$
Let $F=\varsigma \circ \phi \circ \varsigma^{-1}|B_2: B_2
\rightarrow B$. Then
$\mathrm{Fix}(F)=\varsigma\big(\mathrm{Fix}(\phi)\big)$.

\medskip If $\mathrm{Fix}(F)=\emptyset$, then we can define a
map $G: B_2 \rightarrow \partial B_2$ as follows:

\medskip (c)\ \ For any $z \in B_2-A_2-A_4$, there exists a
unique positive number $r_z$ such that $z+r_z\big(z-F(z)\big) \in
\partial B_2$, and then we put $G(z)=z+r_z\big(z-F(z)\big)$;

\medskip (d)\ \ For any $z \in A_2 \cup A_4$, put $G(z)=z$.

\medskip \noindent It is easy to show that $G$ is continuous
at every point $z \in B_2$. On the other hand, by computing we see
that the degree of the map $G|\partial B_2: \partial B_2 \rightarrow
\partial B_2$ is $-1$, which cannot be extended to be a continuous
map from $B_2$ to $\partial B_2$. These lead to a contradiction.
Thus $\mathrm{Fix}(F) \neq \emptyset$, and hence,
$\mathrm{Fix}(f)=\mathrm{Fix}(\phi)=\varsigma^{-1}\big(\mathrm{Fix}(F)\big)
\neq \emptyset$. Similar to the proof of the Claim 5, we have
$\mathrm{Fix}(f) \cap \big(\mathbb{V} \cup K_0 \cup
[u,v]_A\big)=\emptyset$. Therefore, $\mathrm{Fix}(f) \cap
D_0=\mathrm{Fix}(f) \cap W \neq \emptyset$. Claim 9 is proved,
and the proof of Theorem 2.1 is completed. \hfill$\Box$

\section{Outflanking arc theorem}

In this section, we will first introduce a notion of outflanking arc. Then using this notion we establish a fixed point theorem,
called Outflanking Arc Theorem. A part of Theorem \ref{main} is used in the proof of this theorem. At last, we deduce Brouwer's Lemma
from Outflanking Arc Theorem, and construct an example to show the more generality of the theorem than Brouwer's Lemma.

\begin{defin}\label{step arc} Let $Y \subset \mathbb{R}^2$, and $f: Y
\rightarrow \mathbb{R}^2$ be a continuous map. For any $z \in Y$,
define $f^0(z)=z$. For $k=1,2, \ldots$, if $f^{k-1}(z)$ has been
defined and $f^{k-1}(z) \in Y$, then we define
$f^k(z)=f(f^{k-1}(z))$.

\medskip

Let $A$ be a given arc in $Y$ with endpoints $x$ and $u$. For $k
\geqslant 0$, write $u_k=f^k(x)$ if $f^k(x)$ has been defined.

\medskip

(1)\, If $u=u_1=f(x)$ then $A$ is called a {\it \textbf{one step arc
of $f$ from $x$ to $u$}}. If there exists $n \geqslant 2$ such that
$u=u_n=f^n(x)$, $\{u_1, \ldots, u_n\} \subset \mathrm{Int}\ A$, $u_0
\prec u_1 \prec \cdots \prec u_{n-1} \prec u_n$ for some natural
order $\prec$ on $A$, and
\begin{equation} \label{eq:4-1}
f([u_{k-1}, u_k]_A)=[u_k, u_{k+1}]_A \mbox{\ \ for\ \ } k \in
\mathbb{N}_{n-1},
 \end{equation}
then $A$ is called an {\it \textbf{$n$ steps arc of $f$ from $x$ to
$u$}}.

\medskip

(2) For some $n \geqslant 1$, if $A$ is an $n$ steps arc of $f$ from
$x=u_0$ to $u=u_n=f^n(x)$, and there exist $y \in (u_{n-1}, u_n]_A$
and $v \in [x, y)_A$ such that $f|[u_{n-1}, y]_A$ is injective,
$v=f(y)$, and $f|(u_{n-1}, y)_A$ dodges $A$, $f|[y, u]_A$ is moving,
that is,
\begin{equation} \label{eq:4-2}
f((u_{n-1}, y)_A) \cap A=f([y,u]_A) \cap [y,u]_A=\emptyset,
 \end{equation}
then $A$ is called an {\it \textbf{$f$-outflanking $n$ steps arc}},
$y$ is called the {\it \textbf{$f$-outflanking point}} of $A$, and
$v$ is called the {\it \textbf{$n$-outflanked point}} of $A$.

\end{defin}

\begin{rem}\label{throw away} In (2) of Definition \ref{step arc}, if $n \geqslant
2$, and the $f$-outflanked point $v \in [u_k, u_{k+1})_A$ for some
$k \in \mathbb{N}_{n-1}$, then $[u_k, u_n]_A$ is an $f$-outflanking
$n-k$ steps arc from $u_k$ to $u_n$. In this case, we can throw away
$[u_0, u_k)_A$ and replace $A$ by $[u_k, u_n]_A$.
\end{rem}

As an example, we have the following

\begin{lem}  Let $Y \subset \mathbb{R}^2$, $f: Y
\rightarrow \mathbb{R}^2$ be a continuous map, and $A$ be an $n$
steps arc of $f$ from $x=u_0$ to $u=u_n=f^n(x)$, $n \geqslant 2$.
Let $\{u_1, \ldots, u_{n-1}\} \subset \stackrel{\circ}A$ be the same
as in (1) of Definition \ref{step arc}. If $f|A$ is injective, and
$f((u_{n-1}, u_n]_A) \cap A \neq \emptyset$, then $A$ is an
$f$-outflanking $n$-steps arc.
\end{lem}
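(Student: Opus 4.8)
The plan is to exhibit the $f$-outflanking point $y\in(u_{n-1},u_n]_A$ and the $f$-outflanked point $v=f(y)\in[x,y)_A$ required by Definition \ref{step arc}(2), and then to verify the three conditions imposed on them: that $f|[u_{n-1},y]_A$ is injective, that $f\big((u_{n-1},y)_A\big)\cap A=\emptyset$, and that $f\big([y,u]_A\big)\cap[y,u]_A=\emptyset$. The first is immediate, since $[u_{n-1},y]_A\subset A$ and $f|A$ is injective by hypothesis. Everything else rests on the single identity \eqref{eq:4-1} with $k=n-1$, namely
$$f\big([u_{n-2},u_{n-1}]_A\big)=[u_{n-1},u_n]_A ,$$
which is available exactly because $n\geqslant 2$; recall also that $u_n=u$ is an endpoint of $A$, so $[u_{n-1},u_n]_A$ is a neighborhood of $u_n$ in $A$.

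First I would pin down $y$. Put $S=f^{-1}(A)\cap[u_{n-1},u_n]_A$, a closed set containing $u_{n-1}$ (as $f(u_{n-1})=u_n\in A$). I claim $u_{n-1}$ is isolated in $S$: if there were $w_j\in(u_{n-1},u_n]_A$ with $f(w_j)\in A$ and $w_j\to u_{n-1}$, then $f(w_j)\to f(u_{n-1})=u_n$, so for large $j$ we would have $f(w_j)\in[u_{n-1},u_n]_A=f\big([u_{n-2},u_{n-1}]_A\big)$; writing $f(w_j)=f(p_j)$ with $p_j\in[u_{n-2},u_{n-1}]_A$ and using the injectivity of $f|A$ gives $w_j=p_j$, contradicting $p_j\preceq u_{n-1}\prec w_j$. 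Hence there is $w_1\in(u_{n-1},u_n]_A$ with $f\big((u_{n-1},w_1)_A\big)\cap A=\emptyset$. On the other hand $f\big((u_{n-1},u_n]_A\big)\cap A\neq\emptyset$ supplies some $w^{\ast}\in(u_{n-1},u_n]_A$ with $f(w^{\ast})\in A$, and necessarily $w^{\ast}\succeq w_1$; thus $S\cap[w_1,u_n]_A$ is a nonempty compact set, and I set $y=\min\big(S\cap[w_1,u_n]_A\big)$ and $v=f(y)\in A$. Then $y\in[w_1,u_n]_A\subset(u_{n-1},u_n]_A$, and every $w\in(u_{n-1},y)_A$ lies either in $(u_{n-1},w_1)_A$ or in $[w_1,y)_A$ and so fails to lie in $f^{-1}(A)$; hence $f\big((u_{n-1},y)_A\big)\cap A=\emptyset$.

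The two remaining conditions follow from the displayed identity together with injectivity. For the location of $v$: if $v$ lay in $[u_{n-1},u_n]_A=f\big([u_{n-2},u_{n-1}]_A\big)$, then $v=f(p)$ for some $p\in[u_{n-2},u_{n-1}]_A$, and $f(y)=v=f(p)$ with $y\succ u_{n-1}\succeq p$ would contradict injectivity of $f|A$; since $v=f(y)\in A$, this forces $v\in A-[u_{n-1},u_n]_A=[x,u_{n-1})_A\subset[x,y)_A$, exactly as Definition \ref{step arc}(2) requires. For the moving condition: if some $z$ lay in $f\big([y,u]_A\big)\cap[y,u]_A$, then since $u=u_n$ and $y\succ u_{n-1}$ we would get $z\in[y,u_n]_A\subset[u_{n-1},u_n]_A=f\big([u_{n-2},u_{n-1}]_A\big)$, so $z=f(w)=f(p)$ for some $w\in[y,u_n]_A$ and $p\in[u_{n-2},u_{n-1}]_A$; injectivity forces $w=p$, which is impossible because $[y,u_n]_A$ and $[u_{n-2},u_{n-1}]_A$ are disjoint (as $u_{n-1}\prec y$). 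Therefore $f\big([y,u]_A\big)\cap[y,u]_A=\emptyset$, and $A$, with outflanking point $y$ and outflanked point $v$, is an $f$-outflanking $n$ steps arc.

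I expect the only genuinely delicate step to be the isolatedness of $u_{n-1}$ in $S$ — that is, ruling out that $f$ sends points arbitrarily close to $u_{n-1}$ on the $u_n$ side back into $A$. This is exactly where the fact that $u_n$ is an endpoint of $A$, combined with $f\big([u_{n-2},u_{n-1}]_A\big)=[u_{n-1},u_n]_A$ and the injectivity of $f|A$, is essential; without it one could only take $y$ to be an infimum and might be driven to the inadmissible value $y=u_{n-1}$. Everything after that is bookkeeping with the order $\prec$ on $A$.
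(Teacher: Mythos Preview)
Your proof is correct and follows essentially the same strategy as the paper's, with one small difference in emphasis. The paper argues globally: for every $k\in\mathbb N_{n-1}$ one has $f((u_{n-1},u_n]_A)\cap A_k=f((u_{n-1},u_n]_A)\cap f(A_{k-1})=\emptyset$ by injectivity, so the hypothesized intersection with $A$ is forced into $[u_0,u_1)_A$; the existence of the minimal $y$ is then asserted without comment. You instead use only the single identity $f([u_{n-2},u_{n-1}]_A)=[u_{n-1},u_n]_A$ and spend your care on showing $u_{n-1}$ is isolated in $S$, so that the minimum really exists. Your route yields only $v\in[x,u_{n-1})_A$ rather than the sharper $v\in[u_0,u_1)_A$ that the paper gets, but the definition of outflanking arc needs only $v\in[x,y)_A$, so nothing is lost for the lemma as stated.
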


\begin{proof} Write $A_k=[u_k, u_{k+1}]_A$ for $k \in
\mathbb{N}_{n-1} \cup \{0\}$. Since $f|A$ is injective, for $k \in
\mathbb{N}_{n-1}$, we have
$$f((u_{n-1}, u_n]_A) \cap A_k=f((u_{n-1}, u_n]_A) \cap f(A_{k-1})=f((u_{n-1}, u_n]_A \cap A_{k-1})=\emptyset,$$
which with $f((u_{n-1}, u_n]_A) \cap A \neq \emptyset$ implies
$$f((u_{n-1}, u_n]_A) \cap A_0-\{u_1\} \neq \emptyset.$$
Thus there is $y \in (u_{n-1}, u_n]_A$ such that $f(y) \in [u_0,
u_1)_A \subset [x,y)_A$ and $f((u_{n-1}, y)_A) \cap A =\emptyset$;
the latter equality means that $f|(u_{n-1}, y)_A$ dodges $A$. We
also have
\begin{eqnarray*}
f([y, u_n]_A) \cap [y, u_n]_A &\subset& f((u_{n-1}, u_n]_A) \cap
f((u_{n-2}, u_{n-1}]_A) \\
&=& f((u_{n-1}, u_n]_A \cap (u_{n-2}, u_{n-1}]_A)=\emptyset,
\end{eqnarray*}
which means that $f|[y, u]_A$ is moving. Hence $A$ is an
$f$-outflanking $n$ steps arc. The lemma is proved.
\end{proof}

The following Outflanking Arc Theorem is the main results of this section, which
shows that an $f$-outflanking arc with some technical conditions (mainly a local orientation preserving and injectivity condition, and an
exclusivity condition) implies the existence of a
fixed point.

\medskip

\begin{thm}[Outflanking Arc Theorem] \label{main-2}Let $Y \subset \mathbb{R}^2$, and $f:
Y \rightarrow \mathbb{R}^2$ be a continuous map. Suppose that

\medskip

(1)\,  For some $n \geqslant 1$, there exists an $f$-outflanking
$n$ steps arc $A$ from $x=u_0$ to $u=u_n=f^n(x)$, and there exists a
disc $E \subset Y$ such that $A \cup f(A) \subset
\stackrel{\circ}E$;

\medskip

(2)\,  Let $\{u_1, \ldots, u_{n-1}\} \subset \stackrel{\circ} {A}$  be
the same as in Definition \ref{step arc}, and let $y \in (u_{n-1}, u_n]_A$ be
the $f$-outflanking point of $A$. Suppose that there exists a connected open neighborhood
$U_{n-1}$ of $u_{n-1}$ in $\stackrel{\circ}E$ such that $f|U_{n-1}$
is an orientation preserving injection, and $f|(u_{n-1}, y)_A$ is
exclusive in $E$, that is,
\begin{equation} \label{eq:4-3}
f(E-(u_{n-1}, y)_A) \cap f((u_{n-1}, y)_A)=\emptyset.
 \end{equation}
  Then $f$ has a fixed point in $\stackrel{\circ}E$.
\end{thm}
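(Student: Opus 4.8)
**The plan is to reduce the Outflanking Arc Theorem to the Quasi-Intermediate Value Theorem (Theorem \ref{main}) by constructing an appropriate disc $X$, an arc in its boundary, and checking the five hypotheses.** First I would deal with the case $n=1$ separately (it is essentially a degenerate case: $A$ is a one-step arc with $u=f(x)$, and the outflanking data gives $v=f(y)$ with $v\prec x\prec y\prec u$ or a similar configuration on $A$ itself, so one may directly apply Theorem \ref{main} with $X$ a small disc neighborhood of $A$ in $E$). For $n\geq 2$ I would exploit Remark \ref{throw away}: after possibly discarding an initial segment $[u_0,u_k)_A$, we may assume the $n$-outflanked point $v$ lies in $[u_0,u_1)_A=[x,u_1)_A$, so that $v\prec x$ is impossible and instead $v\in[x,u_1)_A$ forces the configuration $x\preceq v\prec y$ with $y\succ u_{n-1}$; tracking where $u=u_n$ sits relative to $y$ on $A$, we are in case (2) of Theorem \ref{main} (the configuration $x\preceq v\prec y\prec u$ on the relevant sub-arc), with the "moving" condition $f([y,u]_A)\cap[y,u]_A=\emptyset$ supplied exactly by \eqref{eq:4-2}.

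**The heart of the argument is manufacturing the disc $X$ and verifying the connectedness/winding hypotheses of Theorem \ref{main}.** Here I would use the orientation-preserving local injection $f|U_{n-1}$ together with the step-arc relation $f([u_{k-1},u_k]_A)=[u_k,u_{k+1}]_A$. The key geometric construction: starting from a thin disc neighborhood of the sub-arc $[u_{n-1},y]_A$ inside $E$, I would use that $f$ maps a neighborhood of $u_{n-1}$ homeomorphically and orientation-preservingly, so that the image $f([u_{n-1},y]_A)$, which is an arc landing back near $[x,u_1)_A\subset A$, together with the segment $[u,v]_A=[v,u]_A$ of $A$ it "outflanks", bounds a region; the bounded complementary component $W$ of $\mathbb{R}^2-f([u_{n-1},y]_A)-[v,u]_A$ is the domain where the fixed point will be found. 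The exclusivity condition \eqref{eq:4-3}, namely $f(E-(u_{n-1},y)_A)\cap f((u_{n-1},y)_A)=\emptyset$, is precisely what is needed to verify the preimage condition $f^{-1}f((x,y)_A)=(x,y)_A$ in \eqref{eq:3-2-1} (after relabeling $u_{n-1}$ as the "$x$" of Theorem \ref{main}), and the dodging condition $f((u_{n-1},y)_A)\cap A=\emptyset$ from \eqref{eq:4-2} gives the condition $f([x,y]_A)\cap(u,v)_A=\emptyset$. The orientation-preserving hypothesis on $f|U_{n-1}$ is what guarantees the existence of the neighborhood $U_q$ with $f(U_q-A)\subset W$ (the image wraps to the correct side), using Proposition \ref{wind} and the side-of-directed-arc results.

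**The main obstacle I anticipate is the bookkeeping needed to locate $v=f(y)$ precisely and to confirm we land in case (2) — not case (3), (4) or (5) — of Theorem \ref{main}, and to set up the thin disc $X$ so that $X\cap A$ is a single sub-arc as required by the hypotheses.** The condition $v\in[x,y)_A$ from Definition \ref{step arc}(2) only says $v$ precedes $y$; combined with Remark \ref{throw away} we normalize $v\in[x,u_1)_A$, but then one must still handle whether $u=u_n$ equals $y$ or strictly exceeds it, whether $y=u_n$ or $y\in(u_{n-1},u_n)_A$, and ensure the arc $[v,u]_A\subset A$ on which we run Theorem \ref{main} is genuinely a sub-arc of $\partial X$ — this may require first applying a topological conjugacy (Remark \ref{top conj}) to straighten $E$ and the relevant arcs into a standard position, e.g. $E$ a square and $[u_{n-1},y]_A$, $[v,u]_A$ horizontal segments. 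Once the configuration is standardized and $X$, $W$, $U_q$ are in place, verifying \eqref{eq:3-2}, \eqref{eq:3-2-1}, the existence of $W$, and the moving condition for case (2) is a matter of citing \eqref{eq:4-2}, \eqref{eq:4-3}, the step relations \eqref{eq:4-1}, and Proposition \ref{wind}; Theorem \ref{main}(2) then yields a fixed point in $W\subset\stackrel{\circ}{D_0}\subset\stackrel{\circ}E$, completing the proof. A secondary subtlety is checking that $f([u_{n-1},y]_A)\subset X$ (the first clause of \eqref{eq:3-2-1}), which follows from $f(A)\subset\stackrel{\circ}E$ in hypothesis (1) provided $X$ is chosen large enough inside $E$ while still meeting $A$ in a single arc — a constraint that is compatible because $f([u_{n-1},y]_A)$ is a compact arc disjoint from $\partial E$.
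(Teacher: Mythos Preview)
Your overall strategy---reduce to Theorem \ref{main}, with \eqref{eq:4-3} supplying the preimage condition, the orientation hypothesis supplying $f(U_q-A)\subset W$, and \eqref{eq:4-2} supplying the moving condition on $[y,u]_A$---is the right one, and these identifications are correct. But there is a genuine gap in how you land in a usable case of Theorem \ref{main} when $n\geq 2$.

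With your relabeling $x'=u_{n-1}$ (second paragraph), one has $u'=f(u_{n-1})=u_n$ and $v=f(y)\in[u_0,u_1)_A$; since $n\geq 2$ this gives $v\prec u_{n-1}=x'$, so the configuration is $v\prec x'\prec y\preceq u'$, which is case (3) or (4) of Theorem \ref{main}, \emph{not} case (2). Those cases require the additional moving condition $f([v,u_{n-1}]_A)\cap[v,u_{n-1}]_A=\emptyset$, and this \emph{fails}: the step relations \eqref{eq:4-1} give $f([u_1,u_{n-2}]_A)=[u_2,u_{n-1}]_A\subset[v,u_{n-1}]_A$ for $n\geq 3$, so the image meets the domain. (For your first-paragraph reading with $x'=u_0$, one would instead have $u'=f(u_0)=u_1\neq u_n$, landing in case (1); but then the exclusivity hypothesis \eqref{eq:4-3} only controls $(u_{n-1},y)_A$, not $(u_0,y)_A$, so the preimage condition $f^{-1}f((x',y)_A)=(x',y)_A$ cannot be verified.) Also, for $n=1$ the outflanked point satisfies $v\in[x,y)_A$, hence $x\preceq v$, not $v\prec x$ as you wrote.

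The paper resolves this not by finding a way to verify the missing moving condition, but by an auxiliary-homeomorphism trick that removes the need for it. After straightening $A$ to $[0,n]\subset\mathbb{R}$ via Remark \ref{top conj}, one composes $f$ with a homeomorphism $h_1$ supported on a thin rectangle about $[0,y_1]$ (with $y_1$ just past $u_{n-1}$) that pushes $v$ to $u_{n-1}$. On this rectangle $p\circ f$ strictly increases the first coordinate (this follows from \eqref{eq:4-1} after straightening), so $\mathrm{Fix}(h_1\circ f)=\mathrm{Fix}(f)$. For the modified map $g_1=h_1\circ f$ the arc $[u_{n-1},u_n]$ is now a \emph{one}-step outflanking arc whose outflanked point coincides with its starting point $u_{n-1}$; this is exactly the clean situation $x=v\prec y\preceq u$, which is case (1) or (2) of Theorem \ref{main} with only the moving condition on $[y,u]_A$ required---and that one is supplied by \eqref{eq:4-2}. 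An analogous homeomorphism handles the subcase $n=1$, $v\neq x$. This collapsing step is the missing idea in your plan.
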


\begin{proof} Let $v=f(y) \in [x,y)_A$ be the $f$-outflanked
point of $A$. Let
$$K=f([u_{n-1}, y]_A),\hspace{5mm} K_0=f((u_{n-1}, y)_A) \hspace{5mm} \mbox{and}
\hspace{5mm} P=K \cup (v, u)_A.$$ Then $K_0 \subset K \subset E$.
Since $f|[u_{n-1}, y]_A$ is injective, $K$ is an arc, and
$K_0=\stackrel{\circ}K$. By \eqref{eq:4-2}, $K \cap A=\partial
K=\{v, u\}$, $P=K \cup [v,u]_A$ is a circle, and $K \cup A$ is a
$\sigma$-graph (if $v \in (x,y)_A$) or a circle (if $v=x$). Write
$$y_0=u_{n-1} \hspace{5mm} \mbox{and} \hspace{5mm} y_4=y.$$
Take $\{y_2, y_1, y_3\} \subset (y_0, y_4)_A$ such that $v \in
[x,y_2)_A$, $y_1 \in (y_0, y_2)_A$ and $y_3 \in (y_2, y_4)_A$. Note
that we regard $\mathbb{R}$ as a subspace of $\mathbb{R}^2$. By
Remark \ref{top conj}, we may assume that the following properties (P.1)-(P.3)
hold:

\medskip

(P.1) \hspace{30mm} $A=[0,n] \subset \mathbb{R} \subset
\mathbb{R}^2$,
$$u_k=k \mbox{\ \ for\ \ } k \in \mathbb{N}_n \cup \{0\}, \hspace{5mm} y \in (n-1, n],$$
$$f([k-1, k])=[k, k+1] \mbox{\ \ for\ \ } k \in \mathbb{N}_{n-1},$$
and there exists $\varepsilon_0 \in (0, (y-u_{n-1})/4)$ such that
\begin{equation} \label{eq:4-4}
f(r)-r>\varepsilon_0 \mbox{\ \ for any\ \ } r \in [0, n-1];
 \end{equation}

\medskip

(P.2)\, $y_i=u_{n-1} + i \cdot (y-u_{n-1})/4$ for $i \in
\mathbb{N}_4 \cup \{0\}$, and $v \in [0, y_2)$. Write
$$w_0=(n, 0), \ \ w_1=(n+1,0),$$
$$w_2=(n+1, 1), \ \ w_3=(v,1), \ \ w_4=(v,0),$$
$$w_5=(-1,1), \ \ w_6=(-1,-1),\ \ w_7=(v,-1).$$
Then we have

\medskip

(a)\, $f(y_i)=w_i$ for $i \in \mathbb{N}_4 \cup \{0\}$;

\medskip

(b)\, For $i \in \mathbb{N}_3$, $f([y_{i-1}, y_i])=[w_{i-1}, w_i]$,
and $f|[y_{i-1}, y_i]$ is linear;

\medskip

(c)\, If (1) $v=x=0$, or (2) $v \in (0,y)$ and $[0,v) \subset
E-D(P)$, then $f([y_3, y_4])=[w_3, w_4]$;

\medskip

(d)\, If (3) $v \in (0,y)$ and $[0,v) \subset D(P)$, then
$$f([y_3, y_4])=[w_3, w_5] \cup [w_5, w_6] \cup [w_6, w_7] \cup [w_7, w_4].$$

\medskip

(P.3)\, For any real number $a<b$ and $r>0$, write
$$R_r[a,b]=[a,b] \times [-r,r].$$
There exists $$\delta \in \left(0, \frac{\varepsilon_0}{6}\right)
\subset \left(0, \frac{y_1-y_0}{6}\right)$$ such that
$$R_{\delta}[-\delta, n+\delta] \subset \stackrel{\circ}E, \hspace{5mm} B(u_{n-1}, 2\delta) \subset U_{n-1},$$
and
\begin{equation} \label{eq:4-5}
d(f(w), f(z))<\frac{\varepsilon_0}{3} \mbox{\ \ for any\ \ } w, z
\in E \mbox{\ \ with\ \ } d(w,z) \leqslant 2\delta.
 \end{equation}

\medskip

Let $p: \mathbb{R}^2 \rightarrow \mathbb{R}$ be the projection
defined by
$$p(z)=r \mbox{\ \ for any\ \ } z=(r,s) \in \mathbb{R}^2.$$
Then for any $r \in [0, y_1]$ we have $f(r) \in [1, n+1] \subset
\mathbb{R} \subset \mathbb{R}^2$ and hence $p(f(r))=f(r)$. From the
property (P.2) we get
\begin{equation} \label{eq:4-6}
f(r)-r \geqslant 1> \varepsilon_0 \mbox{\ \ for any\ \ } r \in
[y_0,y_1],
 \end{equation}
\begin{equation} \label{eq:4-7}
d(f(r),r) \geqslant 1 \mbox{\ \ for any\ \ } r \in [y_0,y_3],
 \end{equation}
\begin{equation} \label{eq:4-8}
r-p(f(r))>y_3-v>y_3-y_2>\varepsilon_0 \mbox{\ \ for any\ \ } r \in
(y_3,y_4].
 \end{equation}
It follows from \eqref{eq:4-4}, \eqref{eq:4-7} and \eqref{eq:4-8}
that
\begin{equation} \label{eq:4-9}
d(f(r), r)>\varepsilon_0 \mbox{\ \ for any\ \ } r \in [0,y],
 \end{equation}
and from \eqref{eq:4-9} and \eqref{eq:4-5} we obtain
\begin{equation} \label{eq:4-10}
d(f(z),z)>\frac{\varepsilon_0}{3} \mbox{\ \ for any\ \ } z \in
R_{\delta}[-\delta, y+\delta].
 \end{equation}

Let $X=D(P)$. Then $X$ is a disc in $\stackrel{\circ}E$. Let
$W=~\stackrel{\circ}X$. Then $W$ is the unique bounded connected
component of $\mathbb{R}^2-P$. Let $q=u_{n-1}+\delta$. Then $q \in
(u_{n-1}, y_1)$, and
$$f((q-\delta, q + \delta]) \subset f((u_{n-1}, y_1))=(n, n+1).$$
Since $f|[y_0, y_1]$ is linear, we have
$$f(q+\delta)-f(q)=f(q)-f(u_{n-1})=f(q)-n.$$
Take an $\varepsilon \in (0, \delta/2]$ such that
\begin{equation} \label{eq:4-11}
d(f(w), f(z))<f(q)-n
 \end{equation}
for any $w, z \in E$ with $d(w,x) \leqslant \varepsilon$. Let
\begin{equation} \label{eq:4-12}
U_q=B(q, \varepsilon) \cap ([u_{n-1}, y_1] \times [0,
\varepsilon]).
 \end{equation}
Then $U_q$ is a disc, which is a neighborhood of $q$ in $X$, and is
on the left side of the directed arc $[u_{n-1}, y_1]$ from $u_{n-1}$
to $y_1$. Since $U_q \subset U_{n-1}$ and $f|U_{n-1}$ is an
orientation preserving injection, $f(U_q)$ is also a disc on the
left side of the directed arc $[n, n+1]$ from $n$ to $n+1$. By the injectivity of
$f|U_{n-1}$ again, we have
\begin{equation} \label{eq:4-13}
f(U_q-A) \cap f([u_{n-1}, q+\delta])=\emptyset.
 \end{equation}
From \eqref{eq:4-11} and \eqref{eq:4-12} we get
$$f(U_q) \cap (P-f([u_{n-1}, q+\delta]))=\emptyset,$$
which with \eqref{eq:4-13} implies $f(U_q-A)\cap P=\emptyset$, and
hence we obtain
\begin{equation} \label{eq:4-14}
f(U_q-A) \subset W.
 \end{equation}

By Remark \ref{throw away}, we need only discuss the following three cases:

\medskip

{\bf Case 1.}\, $n=1$ and $v=x$. In this case, we have $x=v<y
\leqslant u$. By \eqref{eq:4-2}, we have
\begin{equation} \label{eq:4-15}
K \cap (v, u)_A=f([y, u]_A) \cap [y, u]_A=\emptyset.
 \end{equation}
Thus $f|X$ satisfies the condition (2) or the condition (1) of
Theorem \ref{main}, and hence, $f$ has a fixed point in $W \subset X
\subset \stackrel{\circ}E$.

\medskip

{\bf Case 2.}\, $n=1$ and $v \in (u_0, y_2)$. In this case, we
consider the rectangle $R=R_{\delta}[-\delta, y_3]$, and define a
homeomorphism $h: \mathbb{R}^2 \rightarrow \mathbb{R}^2$ by

\medskip

(h.1)\, $h|(\mathbb{R}^2-\stackrel{\circ}R)=\mathrm{id}$, and
$h(v)=0=u_0$;

\medskip

(h.2)\, For any $z \in \partial R$, $h|[z,v]$ is linear.
\medskip
\\
Let $g=hf: Y \rightarrow \mathbb{R}^2$. Then we have

\medskip

(g.1)\, For any $z \in Y$, if $f(z) \in
\mathbb{R}^2-\stackrel{\circ}R$ then $g(z)=f(z)$;

\medskip

(g.2)\, For any $z \in Y-\stackrel{\circ}R$, if $f(z) \in
\stackrel{\circ}R$ then $g(z) \in \stackrel{\circ}R$, and hence,
both $z \notin \mathrm{Fix}(f)$ and $z \notin \mathrm{Fix}(g)$.

\medskip

It follows from (g.1) and (g.2) that
\begin{equation} \label{eq:4-16}
\mathrm{Fix}(g) \cap (Y-\stackrel{\circ}R)=\mathrm{Fix}(f) \cap
(Y-\stackrel{\circ}R).
 \end{equation}
By the property (P.2), we have
$$d([y_0, y_3], f([y_0, y_3]))=n-y_3 \geqslant y_4-y_3,$$
which with \eqref{eq:4-5} and \eqref{eq:4-10} implies
\begin{equation} \label{eq:4-17}
d(R,f(R))>y_4-y_3-2\delta-\frac{\varepsilon_0}{3}>\frac{y_4-y_3}{2}.
 \end{equation}
Thus $f(R) \subset \mathbb{R}^2-R$, and by (g.1) we get
$$\mathrm{Fix}(g) \cap R=\mathrm{Fix}(f) \cap R=\emptyset,$$
which with \eqref{eq:4-16} implies
$\mathrm{Fix}(g)=\mathrm{Fix}(f)$.

\medskip

Clearly, we have

\medskip

\textbf{Claim 1.}\ {\it $A=[0,1]$ is a $g$-outflanking one step arc
from $x=u_0=0$ to $u=u_1=1$, $y$ is still the $g$-outflanking point
of $A$, $g|(u_0, y)_A$ is also exclusive in $E$, $g|U_{n-1}$ is also
an orientation preserving injection, but the $g$-outflanked point of
$A$ is $u_0$ not $v$}.

\medskip

By Claim 1 and the discussion in Case 1, $g$ has a fixed point in
$\stackrel{\circ}E$, and hence $f$ has a fixed point in
$\stackrel{\circ}E$.

\medskip

{\bf Case 3.}\, $n \geq 2$ and $v \in [u_0, u_1)$. In this case, we
consider the rectangle $R_1=R_{\delta}[-\delta, y_1]$, and define a
homeomorphism $h_1: \mathbb{R}^2 \rightarrow \mathbb{R}^2$ by

\medskip

($\mathrm{h}_1$.1)\,
$h|(\mathbb{R}^2-\stackrel{\circ}R_1)=\mathrm{id}$, and
$h_1(v)=0=u_{n-1}$;

\medskip

($\mathrm{h}_1$.2)\, For any $z \in \partial R_1$, $h_1|[z,v]$ is
linear. \medskip
\\
Let $g_1=h_1f: Y \rightarrow \mathbb{R}^2$. Similar to
\eqref{eq:4-16}, we have
\begin{equation} \label{eq:4-18}
\mathrm{Fix}(g_1) \cap (Y-\stackrel{\circ}R_1)=\mathrm{Fix}(f) \cap
(Y-\stackrel{\circ}R_1).
 \end{equation}
For any $z \in R_1$, from \eqref{eq:4-4}, \eqref{eq:4-5},
\eqref{eq:4-6} and \eqref{eq:4-10} we obtain
\begin{equation} \label{eq:4-19}
p(f(z))-p(z)>y_4-y_3-2\delta-2\delta-\frac{\varepsilon_0}{3}>\frac{y_4-y_3}{2}.
 \end{equation}
Note that $p(g_1(z))=p(f(z))$ if $f(z) \in
\mathbb{R}^2-\stackrel{\circ}R_1$, and $p(g_1(z))>p(f(z))$ if $f(z)
\in \stackrel{\circ}R_1$. From \eqref{eq:4-19}, we get
$$p(g_1(z))-p(z)>\frac{y_4-y_3}{2} \mbox{\ \ for any\ \ } z \in R_1,$$
which with \eqref{eq:4-19} and \eqref{eq:4-18} implies
$$\mathrm{Fix}(g_1)=\mathrm{Fix}(g_1) \cap (Y-\stackrel{\circ}R_1)=\mathrm{Fix}(f) \cap (Y-\stackrel{\circ}R_1)=\mathrm{Fix}(f).$$

Similar to Claim 1, we have

\medskip

\textbf{Claim 2.}\ {\it $[u_{n-1},u_n]$ is a $g_1$-outflanking one
step arc from $u_{n-1}=n-1$ to $u=u_n=n$, $y$ is still the
$g_1$-outflanking point of $A$, $g_1|(u_{n-1}, y)_A$ is also
exclusive in $E$, $g_1|U_{n-1}$ is also an orientation preserving
injection, but the $g_1$-outflanked point of $A$ is $u_{n-1}$ not
$v$}.

\medskip

By Claim 2 and the discussion in Case 1, $g_1$ has a fixed point in
$\stackrel{\circ}E$, and hence $f$ has a fixed point in
$\stackrel{\circ}E$. Theorem \ref{main-2} is proved.
\end{proof}

\begin{thm} \label{main3} Let $Y \subset \mathbb{R}^2$, and $f:
Y \rightarrow \mathbb{R}^2$ be a continuous map. Suppose that $f$
has an $m$-periodic point $x$ for some $m \in \mathbb{N}-\{1\}$. For
any $k \in \mathbb{N} \cup \{0\}$, write $u_k=f^k(x)$. Suppose that
there is a connected open set $V_0$ in $\mathbb{R}^2$ such that
$\{u_0, u_1\} \subset V_0 \subset Y-\mathrm{Fix}(f)$ and $f^k(V_0)
\subset Y$ for $k \in \mathbb{N}_{m-1}$. For $k \in \mathbb{N}_m$,
write $V_k=f^k(V_0)$. Let $V=\bigcup \{V_k: k \in \mathbb{N}_{m-1}
\cup \{0\}\}$.

\medskip

(1)\,  If $f|V$ is injective, then there exist $n \in
\mathbb{N}_{m-1}$ and an $f$-outflanking $n$ steps arc $A$ from
$u_0$ to $u_n$ such that $A \subset V$.

\medskip

(2)\,  Further, let $y \in (u_{n-1}, u_n]_A$ be the
$f$-outflanking point of $A$. If $f|V$ is an orientation preserving
injection, and there exists a disc $E$ in $Y$ such that $A \cup f(A)
\subset \stackrel{\circ}E$ and $f|(u_{n-1}, y)_A$ is exclusive in
$E$, then $f$ has a fixed point in $\stackrel{\circ}E$.

\end{thm}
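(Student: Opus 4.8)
The plan is to deduce Theorem~\ref{main3} from the Outflanking Arc Theorem (Theorem~\ref{main-2}), so the two parts are quite different in character: part~(1) is a purely combinatorial/topological construction of the outflanking arc inside $V$, while part~(2) is an essentially immediate application of Theorem~\ref{main-2} once the hypotheses are matched up.

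For part~(1), the idea is to build an arc following the orbit $u_0,u_1,\dots$ and detect the first ``wrap-around''. First I would use that $V$ is path-connected (being a union $V=\bigcup_{k=0}^{m-1}V_k$ of connected sets, all containing a point of a chain since $u_0\in V_0\cap(\text{stuff})$... more carefully, $f^k$ is continuous and $V_0$ is connected so each $V_k$ is connected, and consecutive ones share $u_k\in V_{k-1}\cap V_k$ when $k\le m-1$, while $u_m=u_0\in V_0$; hence $V$ is connected, and being an open subset of $\mathbb R^2$ it is arcwise connected). Since $f|V$ is injective, the orbit points $u_0,\dots,u_{m-1}$ are pairwise distinct while $u_m=u_0$. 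The plan is then to inductively construct arcs $A_1\subset A_2\subset\cdots$ in $V$ where $A_k$ runs through $u_0,\dots,u_k$ in order and satisfies the ``$n$ steps arc'' compatibility $f([u_{j-1},u_j]_{A_k})=[u_j,u_{j+1}]_{A_k}$; this is possible as long as $u_{k+1}$ is disjoint from the interior of $A_k$ and we can prepend/append a small arc — here injectivity of $f|V$ is what forces $f(A_k\setminus\text{last edge})$ to be an initial segment of $A_k$ shifted by one, so the iterated-image structure propagates. Let $n\in\mathbb N_{m-1}$ be the smallest index for which $u_{n+1}$ (or, since $u_m=u_0$, possibly $u_0$ itself) meets the arc constructed so far, equivalently $f(\text{last edge})$ hits $A_n$; then $A:=A_n=[u_0,u_n]_A$ is an $n$ steps arc and $f([u_{n-1},u_n]_A)$ meets $A$. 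Applying the Lemma immediately preceding Theorem~\ref{main-2} (the ``As an example'' lemma: if $f|A$ is injective and $f((u_{n-1},u_n]_A)\cap A\neq\emptyset$ then $A$ is $f$-outflanking) gives that $A$ is an $f$-outflanking $n$ steps arc with all the required data $y,v$. Throughout, $f|A$ is injective because $A\subset V$ and $f|V$ is injective, and $f((u_{n-1},y)_A)\cap A=\emptyset$ and $f([y,u]_A)\cap[y,u]_A=\emptyset$ come out of the construction plus injectivity (the latter because $f([y,u]_A)\subset f((u_{n-1},u_n]_A)$ is disjoint from $f((u_{n-2},u_{n-1}]_A)\supset(u_{n-1},u_n]_A$... exactly as in the Lemma's proof). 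One must also check $V_0\subset Y-\mathrm{Fix}(f)$ is used to ensure no $u_k$ is fixed, so $f|[y,u]_A$ being moving is consistent.

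For part~(2), the plan is to verify the hypotheses of Theorem~\ref{main-2} one by one. Hypothesis~(1) of Theorem~\ref{main-2} is given outright: the disc $E\subset Y$ with $A\cup f(A)\subset\stackrel{\circ}E$. For hypothesis~(2): $\{u_1,\dots,u_{n-1}\}\subset\stackrel{\circ}A$ and the $f$-outflanking point $y$ come from part~(1); we need a connected open neighborhood $U_{n-1}$ of $u_{n-1}$ in $\stackrel{\circ}E$ on which $f$ is an orientation-preserving injection — take $U_{n-1}$ to be any sufficiently small connected open ball about $u_{n-1}$ contained in $V\cap\stackrel{\circ}E$; since $u_{n-1}\in V$ (as $A\subset V$) and $V$ is open, and $f|V$ is injective and orientation preserving (using property (P.5) and the definition of orientation preserving for $f|U$ on connected open sets), $f|U_{n-1}$ is an orientation-preserving injection. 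Finally $f|(u_{n-1},y)_A$ is exclusive in $E$ by hypothesis. Then Theorem~\ref{main-2} yields a fixed point of $f$ in $\stackrel{\circ}E$, which is the conclusion.

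The main obstacle I anticipate is the careful bookkeeping in part~(1): one must make the inductive arc construction rigorous — in particular showing that at each non-terminal step the new orbit point $u_{k+1}$ really can be joined by a short arc inside $V$ to the end of $A_k$ while keeping $A_k$ an honest sub-arc (no self-intersections) and preserving the edge-image identities $f([u_{j-1},u_j]_{A})=[u_j,u_{j+1}]_{A}$, and that the stopping index $n\le m-1$ exists (it does, because $u_m=u_0\in A_1$, so the process cannot run past step $m-1$ without a collision). Getting the precise form of the outflanking data right — that $v=f(y)\in[x,y)_A$, that $f|[u_{n-1},y]_A$ is injective, and that the two disjointness conditions in \eqref{eq:4-2} hold — will require invoking injectivity of $f|V$ repeatedly in the style of the proof of the ``As an example'' Lemma, and this is where most of the genuine work lies; part~(2) should then be essentially a citation of Theorem~\ref{main-2}.
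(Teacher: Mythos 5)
Your plan for part~(2) is correct and matches the paper exactly: once part~(1) gives an outflanking $n$~steps arc $A\subset V\subset Y$, one checks the hypotheses of Theorem~\ref{main-2} directly, taking $U_{n-1}$ to be any small connected open neighbourhood of $u_{n-1}$ inside $V\cap\stackrel{\circ}E$.

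For part~(1), however, there is a genuine gap, and it sits precisely in the place you dismiss as ``bookkeeping.'' Two problems.

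First, once you fix the one-step arc $A_1=J$ from $u_0$ to $u_1$, the ``$n$~steps arc'' compatibility $f\big([u_{j-1},u_j]_A\big)=[u_j,u_{j+1}]_A$ leaves you \emph{no} freedom to ``prepend/append a small arc'': the $(k+1)$-st edge is forced to be $f^k\big([u_0,u_1]_J\big)$. So the entire construction reduces to choosing a single initial arc $J$ from $u_0$ to $u_1$, and the question is whether some choice of $J$ works. The paper does not take an arbitrary $J$; it engineers a very specific one by the ``expanding square'' device: it takes a disc $D\subset V_0$ (normalised so that $u_0=0$, $u_1=1$, $D=[-1,2]\times[-1,1]$), sets $Q_t=[-t,t]^2$, finds the first $b$ with $f(Q_b)\cap Q_b=f(\partial Q_b)\cap\partial Q_b\neq\emptyset$, picks $w\in\partial Q_b\cap f(\partial Q_b)$ and $w'\in\partial Q_b$ with $f(w')=w$, sets $L=f\big([x,w']\big)$, and defines $J=[x,w]\cup L$. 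The payoff of this construction is equation~\eqref{eq:4-20}: $f\big((x,u_1]_J\big)\cap J=f\big((w,u_1]_L\big)\cap[x,w)$, i.e.\ any self-intersection of $f(J)$ with $J$ necessarily lands in the ``radial'' part $[x,w)_J$. That is exactly what forces the $f$-outflanked point $v$ to lie in $[x,y)_J$ rather than elsewhere on $J$.

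Second, your reliance on Lemma~4.3 (``if $f|A$ is injective and $f\big((u_{n-1},u_n]_A\big)\cap A\neq\emptyset$ then $A$ is outflanking'') is only valid for $n\geq 2$ — the lemma's statement and proof both require it. If the very first wrap-around occurs at step~$1$, i.e.\ $f\big((u_0,u_1]_J\big)\cap J\neq\emptyset$, Lemma~4.3 gives you nothing, and for a generic arc $J$ there is no reason the first hit $v=f(y)\in J$ should satisfy $v\in[u_0,y)_J$ — it could land in $(y,u_1]_J$, in which case $J$ is simply not an outflanking one-step arc. The paper's Case~1 handles $n=1$ precisely because of the $Q_t$ construction, which guarantees $v\in[x,w)_J\subset[x,y)_J$ and (again via~\eqref{eq:4-20}) that $f\big([y,u_1]_J\big)\cap[y,u_1]_J=\emptyset$. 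Only in Case~2, where $f\big((x,u_1]_J\big)\cap J=\emptyset$, does the iteration-until-wraparound argument you describe apply; there the paper's (P.3)--(P.5) and Lemma~4.3-style injectivity arguments go through as you expect. So the missing idea is not combinatorial bookkeeping but the construction of the initial arc $J$ with the crucial ``first-hit lands in $[x,w)$'' property, together with the separate $n=1$ case; without it the argument does not close.
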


\begin{proof} (1) Take a disc $D \subset V_0$ such that $\{u_0,
u_1\} \subset \stackrel{\circ}D$. By Remark \ref{top conj}, we may assume that
$$D=[-1,2] \times [-1,1], \mbox{\ \ and\ \ } x=u_0=0, u_1=1.$$
For any $t \in (0,1]$, write $Q_t=[-t,t]^2$. Then there exists $b
\in (0,1)$ such that
$$Q_t \cap f(Q_t)=\emptyset \mbox{\ \ for any\ \ } t \in (0,b),$$
and
$$f(Q_b) \cap Q_b=f(\partial Q_b) \cap \partial Q_b \neq \emptyset.$$
Take a point $w \in \partial Q_b \cap f(\partial Q_b)$. Then there
is $w' \in \partial Q_b$ such that $f(w')=w$. Write $L=f([x, w'])$.
Then $L \subset f(Q_b) \subset f(D)$. Since $f|D$ is injective,
$f(Q_b)$ is a disc, $L$ is an arc, $\partial L=\{w, u_1\}$, and $(w,
u_1]_L \subset f(\stackrel{\circ}Q_b)$. Since $D \cap
\mathrm{Fix}(f)=\emptyset$, we have $w' \neq w$ and $f(w) \in
f(\partial Q_b)-\{w\}$. Let $J=[x,w] \cup L$. Then $J$ is an arc,
$\partial J=\{x, u_1\}$, $L=[w, u_1]_J$, and $J \subset D \cup
f(D)$. From
$$f((x, w]) \cap L=f((x,w]) \cap f([x,w'])=\emptyset$$
and
$$f((x,w]) \cap [x,w] \subset (f(\stackrel{\circ}Q_b) \cup \{f(w)\})\cap [x,w]=\emptyset,$$
we get $f((x,w]) \cap J=\emptyset$, which with
$$f(L) \cap L=f([w,u_1]_J) \cap f([x,w'])=f([w,u_1]_J \cap [x,w'])=\emptyset$$
implies
\begin{equation} \label{eq:4-20}
f((x,u_1]_J) \cap J=f((w, u_1]_L) \cap J=f((w,u_1]_L) \cap [x,w).
 \end{equation}
By \eqref{eq:4-20}, we have

\medskip

\textbf{Claim 1.}\, If $f((x, u_1]_J) \cap J \neq \emptyset$,
then there exist $y \in (w, u_1]_J$ and $v \in [x,w)_J$ such that
$f(y)=v$ and $f((x,y)_J) \cap J=\emptyset$.

\medskip

\textbf{Case 1.}\, We first consider the case $f((x,u_1]_J) \cap J
\neq \emptyset$. In this case, let $y$ and $v$ be the same as in
Claim 1. By Claim 1, $f|(x,y)_J$ dodges $[v,u_1]_J$ since
$$f((x,y)_J) \cap [v,u_1]_J \subset f((x, u_1]_J) \cap J =\emptyset,$$
and $f|[y, u_1]_J$ is moving since
$$f([y,u_1]_J) \cap [y, u_1]_J \subset f(L) \cap L =\emptyset.$$
Therefore, $J$ is an $f$-outflanking one step arc from $x=u_0$ to
$u_1=f(x)$ and in this case we can take $A=J$.

\medskip

\textbf{Case 2.}\, In the following we consider the case
$f((x,u_1]_J) \cap J=\emptyset$. In this case, we have
$$f(J) \cap J=f(\{x\} \cup (x,u_1]_J) \cap J=\{f(x)\}=\{u_1\}.$$
We also have $m \geqslant 3$, since if $m=2$ then $f(u_1)=x$, which
will lead to $f((x,u_1]_J) \cap J \supset \{x\} \neq \emptyset$.

\medskip

For each $i \in \mathbb{N}_m \cup \{0\}$, write $D_i=f^i(D)$. For
each $i \in \mathbb{N}_{m-1} \cup \{0\}$, write $J(i)=J_i=f^i(J)$.
Since $f|V$ is injective, the following properties hold:

\medskip

(P.1)\, For each $i \in \mathbb{N}_m \cup \{0\}$, $D_i$ is a disc.

\medskip

(P.2)\, For each $i \in \mathbb{N}_{m-1} \cup \{0\}$, $J_i$ is an
arc, $\partial J_n=\{u_i, u_{i+1}\}$, $J_i \subset D_i \cup D_{i+1}$,
and $D_i \subset V \subset Y$.

\medskip

(P.3)\, For each $i \in \mathbb{N}_{m-1}$, $J_i \cap
J_{i-1}=\{u_i\}$.

\medskip

(P.4)\, For $i, j \in \mathbb{N}_{m-2}$ with $i+j<m$,
$$(u_i, u_{i+1}]_{J(i)} \cap J=f^i((x,u_1]_J) \cap J=\emptyset$$
if and only if $f^{i+j}((x,u_1]_J) \cap f^j(J)=\emptyset$.

\medskip

Noting that $u_m=u_0$, we have
\begin{equation} \label{eq:4-21}
f^{m-1}((x,u_1]_J) \cap J = (u_{m-1}, u_m]_{J(m-1)} \cap J \supset
\{u_0\} \neq \emptyset.
 \end{equation}
By (P.4), (P.3) and \eqref{eq:4-21} we get

\medskip

(P.5)\, There exists $n \in \mathbb{N}_{m-1}-\{1\}$ such that both
$\{(u_i, u_{i+1}]_{J(i)}: i \in \mathbb{N}_n\}$ and $\{(u_{i-1},
u_i]_{J(i-1)}: i \in \mathbb{N}_n\}$ are families of pairwise
disjoint semi-open arcs, but
$$(u_n, u_{n+1}]_{J(n)} \cap [x, u_1)_J \neq \emptyset,$$
and there exists $y \in (u_{n-1}, u_n]_{J(n-1)}$ such that $f(y) \in
[x, u_1)_J$ and
\begin{equation} \label{eq:4-22}
f([u_{n-1}, y)_{J(n-1)}) \cap J_i-\{u_{i+1}\}=\emptyset \mbox{\ \
for\ \ } 0 \leqslant i \leqslant n-1.
 \end{equation}

\medskip

Let $A=\bigcup\{(u_{i-1}, u_i]_{J(i-1)}: i \in \mathbb{N}_n\}$. It
follows from (P.5) and (P.3) that $A$ is an arc, $\partial A=\{x,
u_n\}$, and $\{u_1, u_2, \ldots, u_{n-1}\} \subset
\stackrel{\circ}A$. From \eqref{eq:4-22}, we get
$$f((u_{n-1}, y)_{J(n-1)}) \cap A = \emptyset.$$
Thus $f|(u_{n-1}, y)_A$ dodges $A$. From
$$f([y, u_n]_A) \cap [y, u_n]_A \subset f([y, u_n]_{J(n-1)}) \cap f(J_{n-2})=\emptyset,$$
we see that $f|[y, u_n]_A$ is moving. Therefore, $A$ is an
$f$-outflanking $n$ steps arc, and $y$ is the $f$-outflanking point
of $A$.

\medskip

(2)\, By the conclusion of (1), if $f$ satisfies the conditions in
(1) and (2) of Theorem \ref{main3}, then $f$ satisfies the all conditions
in Theorem \ref{main-2}. Hence, by Theorem \ref{main-2}, $f$ has a fixed point in
$\stackrel{\circ}E$. Theorem \ref{main3} is proved.
\end{proof}
\medskip

Clearly, if $Y=\mathbb{R}^2$ and $f: Y \rightarrow \mathbb{R}^2$ is
an orientation preserving homeomorphism having an $m$-periodic point
$x$ for some $m \in \mathbb{N}-\{1\}$, then $f$ satisfies the all
conditions in Theorem \ref{main3}. Thus we derive directly

\begin{thm}[Brouwer's Lemma]  Let $f: \mathbb{R}^2
\rightarrow \mathbb{R}^2$ be an orientation preserving
homeomorphism. If $f$ has a periodic point, then $f$ has a fixed
point\;\!.
\end{thm}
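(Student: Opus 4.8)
The plan is to deduce this statement directly from Theorem \ref{main3}, by verifying that an orientation preserving homeomorphism $f\colon\mathbb R^2\to\mathbb R^2$ with a periodic point meets all of its hypotheses. First I would dispose of the trivial case: if $\mathrm{Fix}(f)\neq\emptyset$ there is nothing to prove, so I may assume $\mathrm{Fix}(f)=\emptyset$. Since $f$ has a periodic point but no fixed point, there is an $m$-periodic point $x$ with $m\in\mathbb N-\{1\}$; set $u_k=f^k(x)$ for $k\geq 0$. This is precisely the starting data required by Theorem \ref{main3} with $Y=\mathbb R^2$.

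Next I would produce the connected open set $V_0$ demanded by Theorem \ref{main3}. Because $\mathrm{Fix}(f)=\emptyset$, we have $Y-\mathrm{Fix}(f)=\mathbb R^2$, so it suffices to take $V_0=\mathbb R^2$; then $\{u_0,u_1\}\subset V_0$ and $f^k(V_0)=\mathbb R^2\subset Y$ for every $k\in\mathbb N_{m-1}$, and $V=\bigcup\{V_k:k\in\mathbb N_{m-1}\cup\{0\}\}=\mathbb R^2$. Since $f$ is a homeomorphism of $\mathbb R^2$, $f|V=f$ is injective; and since $f$ is an orientation preserving homeomorphism, $f|D$ is orientation preserving for every disc $D$, so $f|V$ is an orientation preserving injection in the sense fixed in the subsection on orientations. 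Hence hypothesis (1) of Theorem \ref{main3} applies and yields an integer $n\in\mathbb N_{m-1}$ together with an $f$-outflanking $n$ steps arc $A$ from $u_0$ to $u_n$ with $A\subset V=\mathbb R^2$; let $y\in(u_{n-1},u_n]_A$ be its $f$-outflanking point.

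To invoke hypothesis (2) I still need a disc $E\subset Y$ with $A\cup f(A)\subset\stackrel{\circ}E$ and with $f|(u_{n-1},y)_A$ exclusive in $E$. The set $A\cup f(A)$ is compact (an arc together with its continuous image), hence contained in the interior of a sufficiently large round disc $E\subset\mathbb R^2=Y$. For exclusivity, $f$ is injective on all of $\mathbb R^2\supset E$, and, as recorded in the preliminaries, an injective restriction is automatically exclusive, so $f(E-(u_{n-1},y)_A)\cap f((u_{n-1},y)_A)=\emptyset$. Thus every hypothesis of Theorem \ref{main3}(2) holds, and it follows that $f$ has a fixed point in $\stackrel{\circ}E$; this contradicts $\mathrm{Fix}(f)=\emptyset$ and finishes the proof (alternatively, dropping the reductio, it directly exhibits the desired fixed point).

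The proof has essentially no obstacle of its own: all the difficulty is packaged into Theorems \ref{main}, \ref{main-2} and \ref{main3}. The only points requiring care are (i) making the trivial-case reduction explicit, so that the constraint $V_0\subset Y-\mathrm{Fix}(f)$ is met by the simple choice $V_0=\mathbb R^2$, and (ii) citing the correct elementary facts --- global injectivity of the plane homeomorphism gives both injectivity of $f|V$ and exclusivity of $f|(u_{n-1},y)_A$ in $E$, orientation preservation of the plane homeomorphism transfers to orientation preservation of $f|V$ in the sense defined earlier, and compactness of $A\cup f(A)$ supplies the disc $E$.
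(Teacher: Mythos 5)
Your proof is correct and takes essentially the same route as the paper, which also deduces Brouwer's Lemma directly from Theorem \ref{main3} with $Y=\mathbb R^2$; the paper simply states that the hypotheses are ``clearly'' satisfied, while you spell out the verification (the trivial-case reduction to $\mathrm{Fix}(f)=\emptyset$ so that $V_0=\mathbb R^2$ works, injectivity and orientation preservation of $f|V$ from the global homeomorphism, and compactness of $A\cup f(A)$ to obtain $E$).
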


Let
\begin{eqnarray*}
\mathbf{F} &=& \{f: f \mbox{\ is an orientation preserving
homeomorphism from\ } \mathbb{R}^2 \mbox{\ to\ } \mathbb{R}^2\},\\
\mathbf{F}_P &=& \{f \in \mathbf{F}: f \mbox{\ has a periodic point
with period\ } \geqslant 2\},\\
\mathbf{F}_O &=& \{f \in \mathbf{F}: f \mbox{\ has an
$f$-outflanking $n$ steps arc for some $n \geqslant 1$}\}.
\end{eqnarray*}
From Theorem \ref{main3}, we see that $\mathbf{F}_P \subset \mathbf{F}_O$.
The following example indicates that $(\mathbf{F}_O-\mathbf{F}_P)\cap\mathbf{F}
\neq \emptyset$.

\begin{exa} Let $n \in \mathbb{N}$, and let $\beta \in
\mathbb{R}$ such that $\beta/\pi$ is an irrational number and $n
\beta<2\pi<(n+1)\beta$. Write $\lambda=2^{1/(n+1)}$. Define $f:
\mathbb{C} \rightarrow \mathbb{C}$ by
$$f(z)=\lambda \mathrm{e}^{\beta \mathrm{i}} \cdot z \mbox{\ \ for any\ \ } z \in \mathbb{C}.$$
Then $f \in \mathbf{F}-\mathbf{F}_P$.
\end{exa}

Write $b_0=0$, $b_1=(2\pi-n\beta)/2$, $b_2=2b_1$, $b_3=\beta$. Then
$b_0<b_1<b_2<b_3$. Define a function $\varphi: [0, \beta]
\rightarrow \mathbb{R}$ and an imbedding $\eta: [0, \beta]
\rightarrow \mathbb{C}$ by

\medskip

(i)\, $\varphi(b_0)=\varphi(b_1)=1$, $\varphi(b_2)=1/2$, and
$\varphi(b_3)=\lambda$;

\medskip

(ii)\, $\varphi|[b_{i-1}, b_i]$ is linear, for $i \in \mathbb{N}_3$;

\medskip

(iii)\, $\eta(\theta)=\varphi(\theta) \cdot \mathrm{e}^{\theta
\mathrm{i}}$, for any $\theta \in [0, \beta]$.

\medskip

Write $x=u_0=\eta(0)$, $u_1=\eta(\beta)$, and $A_0=\eta([0,
\beta])$. Then $A_0$ is an arc, $\partial A_0=\{x, u_1\}$, and
$u_1=f(x)$. For $i \in \mathbb{N}$, write $u_i=f^i(x)$, and
$A(i)=A_i=f^i(A_0)$. Then $A_i$ is an arc and $\partial A_i=\{u_i,
u_{i+1}\}$. Let $A=\bigcup\{A_i: i \in \mathbb{N}_{n-1} \cup
\{0\}\}$. Then $A$ is an $n$ steps arc. Let $y=f^n(\eta(b_2))$. Then
$y \in (u_{n-1}, u_n)_{A(n-1)}$. Clearly, $f(y)=\lambda^{n+1} \cdot
1/2=1=u_0$, $f|[u_{n-1},y]_{A(n-1)}$ is injective, $f|(u_{n-1},
y)_A$ dodges $A$, and $f|[y, u_n]_A$ is moving. Thus $A$ is an
$f$-outflanking $n$ steps arc, and hence we have
$\mathbf{F}_O-\mathbf{F}_P \supset \{f\} \neq \emptyset$.

\subsection*{Acknowledgements}
Jiehua Mai, Kesong Yan, and Fanping Zeng  are supported by NNSF of China (Grant No.
12261006) and  NSF of Guangxi Province (Grant No.
2018GXNSFFA281008);  Kesong Yan is also supported by NNSF of China
(Grant No. 12171175) and Project of Guangxi First Class Disciplines
of Statistics and Guangxi Key Laboratory of Quantity Economics; Enhui Shi is supported by NNSF of China (Grant No. 12271388).



\begin{thebibliography}{HD}




\normalsize
\baselineskip=17pt


\bibitem{BF93} M. Barge and J. Franks, \,{\it Recurrent sets for planar
homeomorphisms}, \,From Topology to Computation: Proceedings of the
Smalefest (Berkeley, CA, 1990), \,186\;--\;195, \,Springer, \,New
York, \,1993.

\bibitem{Bing83} R. H. Bing, {\it The Geometric Topology of
$3$-Manifolds}, Colloquium Publications, Vol. {\bf 40}, Providence,
RI: American Mathematical Society, 1983.

\bibitem{Bo81} S. M. Boyles, \,{\it A counterexample to the bounded orbit conjecture},\;
Trans. Amer. Math. Soc., \,{\bf 266}\,(1981),\, 415\;--\;422.

\bibitem{Bro12b} L. E. J. Brouwer, \,{\it Beweiss des ebenen Translationssatzes},
Math. Ann., \,{\bf 72}\,(1912),\, 37\;--\;54.

\bibitem{Bro84} M. Brown, \,{\it A new proof of Brouwer's lemma on translation
 arcs}, \,Houston J. Math., \,{\bf 10}\,(1984), \,35\;--\;41.

\bibitem{Bro90} M. Brown, \,{\it Fixed points for orientation preserving
homeomorphisms of the plane which interchange two points},\; Pacific
J. Math., \,{\bf 143}\,(1990),\, 37\;--\;41.

\bibitem{Fa87} A. Fathi, \,{\it An orbit closing proof of Brouwer's
lemma on translation arcs},\, L'enseignement Mathematique, \;{\bf
33}\,(1987),\, 315\;--\;322.

\bibitem{Fr92} J. Franks, \,{\it A new proof of the Brouwer plane translation theorem},
\, Ergodic Theory Dynam. Systems, \,{\bf 12}\, (1992), \, 217\;--\;226.

\bibitem{Fr88} J. Franks, \,{\it Recurrence and fixed points of surface homeomorphisms},
Ergodic Theory Dynam. Systems, \,{\bf 8}\, (1988), \, 99\;--\;107.

\bibitem{Fra88} J. Franks, \,{\it Generalizations of the Poincar\'e-Birkhoff theorem},
Ann. of Math. (2), \,{\bf 128}\, (1988),  \, 139\;--\;151.

\bibitem{Fra89} J. Franks, \,{\it Realizing rotation vectors for torus homeomorphisms},
Trans. Amer. Math. Soc., {\bf 311} (1989), \, 107\;--\;115.

\bibitem{Gu94} L. Guillou,  \, {\it Th\'eor\`eme de translation plane de Brouwer et g\'en\'eralisations du th\'eor\`eme de
Poincar\'e-Birkhoff}, Topology \, {\bf 33}\, (1994), 331\;--\;351.

\bibitem{Gu08} L. Guillou, \, {\it Free lines for homeomorphisms of the open annulus},
Trans. Amer. Math. Soc., {\bf 360} (2008),  2191\;--\;2204.

\bibitem{KK08} A. Kocsard and A. Koropecki, \, {\it Free curves and periodic points for torus homeomorphisms},
Ergodic Theory Dynam. Systems {\bf 28} (2008),  1895\;--\;1915.

\bibitem{Ku68} K. Kuratowski, \,{\it Topology},  Volume II,  Academic Press, New York and London, 1968.

\bibitem{CS96} P. Le Calvez and A. Sauzet, \, {\it Une d\'emonstration dynamique du th\'eor\`eme de translation de
Brouwer}, \, Exposition. Math., \,{\bf 14}\, (1996), 277\;--\;287.

\bibitem{Ca04}P. Le Calvez,  \, {\it Une version feuillet\'ee du th\'eor\`eme de translation de Brouwer}, Comment.
Math. Helv., \,{\bf 79}\, (2004), 229\;--\;259.

\bibitem{Ca05}P. Le Calvez,  \, {\it Une version feuillet\'ee \'equivariante du th\'eor\'eme de translation de Brouwer},
Inst. Hautes \'Etudes Sci. Publ. Math.,\, {\bf 102}\, (2005), 1\;--\;98.

\bibitem{Cal06}P. Le Calvez,  \, {\it  Periodic orbits of Hamiltonian homeomorphisms of surfaces},
Duke Math. J., \,{\bf 133}\, (2006),  125\;--\;184.

\bibitem{Ca06} P. Le Calvez, \,{\it From Brouwer theory to the study of homeomorphisms of surfaces}, \;
European Mathematical Society (EMS), Z\"urich, 2006, 77\;--\;98.

\bibitem{CT} P. Le Calvez and F. A. Tal, \,{\it Forcing theory for transverse trajectories of surface homeomorphisms},
Invent. Math., {\bf 212} (2018),  619\;--\;729.


\bibitem{Moise77} E. E. Moise, {\it Geometric Topology in Dimensions $2$ and $3$},
Springer-Verlag, New York, 1977.


\bibitem{Na92} Sam B. Nadler Jr., \,{\it Continuum theory: An introduction}, \,Monographs and Textbooks
in Pure and Applied Mathematics, vol. 158, \, Marcel Dekker, Inc.,
\,New York, \,1992.

\bibitem{Sa01} A. Sauzet, \, {\it Application des d\'ecompositions libres \'a l'\'etude des hom\'eomorphismes de
surface}, Th\'ese de l'Universit\'e Paris {\bf 13}, 2001.

\bibitem{Sl88} E. E. Slaminka, \,{\it Brouwer translation theorem for free homeomorphisms},
Trans. Amer. Math. Soc., \,{\bf 306}\, (1988), no. 1, 277\;--\;291.

\bibitem{We14}  S. H. Weintraub, \,{\it Fundamentals of algebraic topology}, \, GTM vol. 270, \,  Springer, \,2014.

\end{thebibliography}
\end{document}